\theoremstyle{plain}
\newtheorem{theorem}{Theorem}[section]
\newtheorem{proposition}[theorem]{Proposition}
\newtheorem{lemma}[theorem]{Lemma}
\theoremstyle{definition}
\newtheorem{definition}[theorem]{Definition}
\newtheorem{remark}[theorem]{Remark}
\newtheorem{example}[theorem]{Example}
\theoremstyle{remark}
\numberwithin{equation}{section}
\newcommand{\FF}{\mathbb{F}}
\newcommand{\NN}{\mathbb{N}}
\newcommand{\RR}{\mathbb{R}}
\newcommand{\bfI}{\mathbf{I}}
\newcommand{\bfS}{\mathbf{S}}
\newcommand{\cA}{\mathcal{A}}
\newcommand{\cD}{\mathcal{D}}
\newcommand{\cF}{\mathcal{F}}
\newcommand{\cL}{\mathcal{L}}
\newcommand{\cM}{\mathcal{M}}
\newcommand{\cT}{\mathcal{T}}
\newcommand{\diff}{\mathrm{d}}
\newcommand{\dd}{\,\mathrm{d}}
\newcommand{\bary}{\mathrm{bary}}
\newcommand{\supp}{\mathrm{supp}}
\newcommand{\1}{\mathbf{1}}
\newcommand{\ldbrack}{[\![}
\newcommand{\rdbrack}{]\!]}
\newcommand*{\EX}[2][]{E^{#1}\left [ #2 \right ]}
\newcommand*{\cEX}[3][]{E^{#1}\left[ #2 \;\middle\vert\; #3 \right]}
\newcommand*{\wt}[1]{\widetilde{#1}}
\newcommand*{\ol}[1]{\overline{#1}}
\begin{document}
\title{%
Robust Pricing and Hedging around the Globe%
\footnote{The authors thank David Hobson, Sigrid K\"allblad, Marcel Nutz, and Yavor Stoev for stimulating discussions.}
}
\date{\today}
\author{%
  Sebastian Herrmann%
  \thanks{
  Department of Mathematics, University of Michigan, email
  \href{mailto:sherrma@umich.edu}{\nolinkurl{sherrma@umich.edu}}.
  }
  \and
  Florian Stebegg%
  \thanks{
  Department of Statistics, Columbia University,
  email
  \href{florian.stebegg@columbia.edu}{\nolinkurl{florian.stebegg@columbia.edu}}.
  }
}
\maketitle

\begin{abstract}
We consider the martingale optimal transport duality for c\`adl\`ag processes with given initial and terminal laws. Strong duality and existence of dual optimizers (robust semi-static superhedging strategies) are proved for a class of payoffs that includes American, Asian, Bermudan, and European options with intermediate maturity. We exhibit an optimal superhedging strategy for which the static part solves an auxiliary problem and the dynamic part is given explicitly in terms of the static part.
\end{abstract}

\vspace{0.5em}

{\small
\noindent \emph{Keywords} Robust superhedging; Semi-static strategies; Martingale optimal transport; Duality.

\vspace{0.25em}
\noindent \emph{AMS MSC 2010}
60G44, 
49N05, 
91G20. 

\vspace{0.25em}
\noindent \emph{JEL Classification}
G12, 
G23, 
C61. 
}

\section{Introduction}

This paper studies the robust pricing and superhedging of derivative securities with a payoff of the form
\begin{align}
\label{eqn:intro:payoff}
F(X,A)
&= f\Big(\int_{[0,T]} X_t \dd A_t\Big).
\end{align}
Here, $f$ is a nonnegative Borel function, $X$ is a c\`adl\`ag price process (realized on the Skorokhod space), and $A$ is chosen by the buyer from a given set $\cA$ of exercise rights. More precisely, $\cA$ is a set of so-called averaging processes, i.e., nonnegative and nondecreasing adapted c\`adl\`ag processes $A$ with $A_T \equiv 1$. Setting $\cA = \lbrace \1_{\ldbrack \tau, T \rdbrack}: \tau \text{ a }[0,T]\text{-valued stopping time}\rbrace$ or $\cA = \lbrace t \mapsto t/T \rbrace$ reduces \eqref{eqn:intro:payoff} to the relevant special cases of American- or Asian-style derivatives, respectively:
\begin{align}
\label{eqn:intro:American and Asian}
f(X_\tau)
\quad\text{or}\quad
f\Big(\frac{1}{T}\int_0^T X_t \dd t\Big).
\end{align}
Other relevant examples are Bermudan options and European options with intermediate maturity (cf.~Examples~\ref{ex:asian european}--\ref{ex:american bermudan}).

\paragraph{Robust pricing problem.}
Let $\mu$ and $\nu$ be probability measures on $\RR$. We denote by $\cM(\mu,\nu)$ the set of (continuous-time) martingale couplings between $\mu$ and $\nu$, i.e., probability measures $P$ under which $X$ is a martingale with marginal distributions $X_0\stackrel{P}{\sim}\mu$ and $X_T\stackrel{P}{\sim} \nu$. The value of the \emph{primal problem}
\begin{equation}
\label{eqn:intro:S}
\bfS
:=\sup_{P\in\cM(\mu,\nu)} \sup_{A\in\cA} \EX[P]{F(X,A)}
\end{equation}
can be interpreted as the maximal model-based price for $F$ over all models which are consistent with the given marginals.

If $\cA$ is a singleton, then \eqref{eqn:intro:S} is a so-called (continuous-time) martingale optimal transport problem. This problem was introduced (for general payoffs) by Beiglb\"ock, Henry-Labord\`ere, and Penkner \cite{BeiglbockHenryLaborderePenkner2013} in a discrete-time setting and by Galichon, Henry-Labord\`ere, and Touzi \cite{GalichonHenryLabordereTouzi2014} in continuous time; cf.~the survey~\cite{Touzi2014}.

\paragraph{Semi-static superhedging problem.}
The formal dual problem to \eqref{eqn:intro:S} has a natural interpretation as a superhedging problem.\footnote{The primal problem \eqref{eqn:intro:S} can be viewed as an optimization over finite measures $P$ with three constraints: two marginal constraints and the martingale constraint. Its formal dual problem is the Lagrange dual problem where suitable functions $\varphi$ and $\psi$ and a suitable process $H$ are used as Lagrange multipliers for the marginal and martingale constraints, respectively.} Loosely speaking, a semi-static superhedge is a triplet $(\varphi,\psi,H)$ consisting of functions $\varphi,\psi$ and a suitable process $H$ such that for every $A \in \cA$, the superhedging inequality holds:
\begin{equation}
\label{eqn:intro:superhedging inequality}
\varphi(X_{0}) + \psi(X_{T}) + \int_{0}^{T} H^A_{t-} \dd X_t
\geq F(X,A)\quad\text{pathwise}.
\end{equation}
Here, the strategy $H = H^A$ may depend in an adapted way on $A$ (cf.~Section~3.2 for a precise formulation). For the example of an American-style payoff, this means that at the chosen exercise time $\tau$, the buyer communicates her decision to exercise to the seller, who can then adjust the dynamic part of his hedging strategy (cf.~\cite[Section~3]{BayraktarHuangZhou2015}). The left-hand side in \eqref{eqn:intro:superhedging inequality} is the payoff of a static position in two European-style derivatives on $X$ plus the final value of a self-financing dynamic trading strategy in $X$. The inequality \eqref{eqn:intro:superhedging inequality} says that the final value of this semi-static portfolio dominates the payoff $F$ for every choice of exercise right and ``all'' price paths. The initial cost to set up a semi-static  superhedge $(\varphi,\psi,H)$ equals the price $\mu(\varphi)+\nu(\psi)$ of the static part.\footnote{We use the common notation $\mu(\varphi)$ for the integral of $\varphi$ against $\mu$.} The formal \emph{dual problem} to \eqref{eqn:intro:S},
\begin{equation}
\label{eqn:intro:I}
\bfI
:= \inf \lbrace \mu(\varphi) + \nu(\psi) : (\varphi,\psi,H) \text{ is a semi-static superhedge}\rbrace,
\end{equation}
amounts to finding the cheapest semi-static superhedge (if it exists) and its initial cost, the so-called robust superhedging price.

\paragraph{Main objectives and relaxation of the dual problem.}
We are interested in \emph{strong duality}, i.e., $\bfS = \bfI$, and \emph{dual attainment}, i.e., the existence of a dual minimizer.

Dual attainment requires a suitable relaxation of the dual problem. Indeed, for the discrete-time martingale optimal transport problem, Beiglb\"ock, Henry-Labord\`ere, and Penkner~\cite{BeiglbockHenryLaborderePenkner2013} show strong duality for upper semicontinuous payoffs but provide a counterexample that shows that dual attainment can fail even if the payoff function is bounded and continuous. Beiglb\"ock, Nutz, and Touzi~\cite{BeiglbockNutzTouzi2017} achieve strong duality and dual attainment for general payoffs and marginals in the one-step case by relaxing the dual problem in two aspects. First, they only require the superhedging inequality to hold in the quasi-sure sense, i.e., outside a set which is a nullset under every one-step martingale coupling between $\mu$ and $\nu$. The reason is that the marginal constraints may introduce barriers on the real line which (almost surely) cannot be crossed by \emph{any} martingale with these marginals; this was first observed by Hobson~\cite{Hobson1998.maximum} (see also Cox~\cite{Cox2008} and Beiglb\"ock and Juillet~\cite{BeiglbockJuillet2016}). These barriers partition the real line into intervals and the marginal laws into so-called irreducible components. Then strong duality and dual attainment can be reduced to proving the same results for each irreducible component \cite{Hobson1998.maximum,BeiglbockNutzTouzi2017}. Second, Beiglb\"ock, Nutz, and Touzi~\cite{BeiglbockNutzTouzi2017} extend the meaning of the expression $\mu(\varphi) + \nu(\psi)$ to certain situations where both individual integrals are infinite. For example, it can happen that $\mu(\varphi) = -\infty$ and $\nu(\psi) = \infty$, but the price $\EX[P]{\varphi(X_0) + \psi(X_T)}$ of the combined static part is well-defined, finite, and invariant under the choice of $P\in\cM(\mu,\nu)$. In this situation, this price is still denoted by $\mu(\varphi)+\nu(\psi)$. We employ both relaxations for the precise definition of the dual problem in Section~\ref{sec:superhedging problem}.

In continuous time, Dolinsky and Soner~\cite{DolinskySoner2014,DolinskySoner2015} show strong duality for uniformly continuous payoffs satisfying a certain growth condition. They use the integration by parts formula to define the stochastic integral $\int_0^T H_{t-} \dd X_t$ pathwise for finite variation integrands $H$. However, dual attainment cannot be expected in this class in general. For our payoffs \eqref{eqn:intro:payoff}, we need to allow integrands that are of finite variation whenever they are bounded but can become arbitrarily large or small on certain price paths. As the integrands are not of finite variation on these paths, the meaning of the pathwise integral needs to be extended appropriately.

For the purpose of the introduction, we discuss our results and methodology in a non-rigorous fashion, ignoring all aspects relating to the relaxation of the dual problem.

\paragraph{Main results.}
We prove strong duality and dual attainment for payoffs of the form \eqref{eqn:intro:payoff} under mild conditions on $f$ and $\cA$ for irreducible marginals (Theorem~\ref{thm:StrongDuality}); all results can be extended to general marginals along the lines of \cite[Section~7]{BeiglbockNutzTouzi2017}. The key idea is the reduction of the primal and dual problems to simpler auxiliary problems, which do not depend on the set $\cA$ of exercising rights. In particular, our results cover American-style derivatives $f(X_\tau)$ for Borel-measurable $f$ and Asian-style derivatives $f(\frac{1}{T}\int_0^T X_t \dd t)$ for lower semicontinuous $f$ and show that both derivatives have (perhaps surprisingly) the same robust superhedging prices and structurally similar semi-static superhedges.

\paragraph{Methodology.}
Our methodology relies on two crucial observations which allow us to bound the primal problem from below and the dual problem from above by simpler auxiliary primal and dual problems, respectively. To obtain a primal lower bound, we show that for any law $\theta$ which is in convex order between $\mu$ and $\nu$, there is a sequence $(P_n)_{n\geq1} \subset \cM(\mu,\nu)$ such that the law of $\int_{[0,T]}X_t\dd A_t$ under $P_n$ converges weakly to $\theta$ if $A$ is a suitable averaging process. This allows us to bound $\bfS$ from below by the value of the auxiliary primal problem 
\begin{equation*}
\wt\bfS
:= \sup_{\mu\leq_c\theta\leq_c\nu} \theta(f).
\end{equation*}
(The converse inequality also holds (cf.~Lemma~\ref{lem:average:convex order}), so that in fact $\bfS = \wt\bfS$.)

Regarding the dual upper bound, we prove (modulo technicalities) that if $\varphi$ is concave and $\psi$ is convex such that $\varphi + \psi \geq f$, then $(\varphi,\psi,H)$ is a semi-static superhedge, where the dynamic part $H$ is given explicitly in terms of $\varphi$ and $\psi$ by
\begin{equation}
\label{eqn:intro:dynamic part}
H_t
:= \varphi'(X_0) - \int_{[0,t]}\lbrace \varphi'(X_0) + \psi'(X_s) \rbrace \dd A_s.
\end{equation}
This allows us to bound $\bfI$ from above by the value of the auxiliary dual problem
\begin{align*}
\wt\bfI
&:= \inf \lbrace \mu(\varphi) + \nu(\psi) : \text{$\varphi$ concave, $\psi$ convex, $\varphi + \psi \geq f$} \rbrace.
\end{align*}

As a consequence, strong duality and dual attainment for $\bfS$ and $\bfI$ follow from the same assertions for the simpler problems $\wt\bfS$ and $\wt\bfI$, which we prove by adapting the techniques of \cite{BeiglbockNutzTouzi2017}. Moreover, our reduction of the dual problem implies that if $(\varphi,\psi)$ is optimal for $\wt\bfI$, then it is also the static part of an optimal semi-static superhedge and the dynamic part $H$ can be computed ex-post via \eqref{eqn:intro:dynamic part}. This dramatically decreases the complexity of the superhedging problem: the optimization over two functions \emph{and a process} satisfying an inequality constraint \emph{on the Skorokhod space} is reduced to an optimization over two functions satisfying an inequality constraint on $\RR$.

Our methodology reveals that many derivatives have the same robust superhedging prices and semi-static superhedges. Indeed, $\wt\bfI$ and $\wt\bfS$ do not depend on the set $\cA$ of exercise rights granted to the buyer, and this independence transfers to $\bfS$ and $\bfI$ under mild conditions on $f$ and $\cA$. For example, if $f$ is lower semicontinuous, then the Asian-style derivative $f(\frac{1}{T}\int_0^T X_t \dd t)$, the American-style derivative $f(X_\tau)$, and the European-style derivative $f(X_{T'})$ (for a fixed maturity $T' \in (0,T)$) all have the same robust superhedging price (Remark~\ref{rem:strong duality:derivatives}). This invariance breaks down when more than two marginals are given.

\paragraph{Related literature.}
Much of the extant literature on robust superhedging in semi-static settings is concerned with strong duality and dual attainment. The results vary in their generality and explicitness as well as their precise formulation. The semi-static setting, where call options are available as additional hedging instruments, dates back to Hobson's seminal paper \cite{Hobson1998} on the lookback option.\footnote{We note that the superhedging strategies described in \cite{Hobson1998} are actually dynamic in the call options.} Many other specific exotic derivatives (mostly without special exercise rights) have been analyzed in this framework in the past two decades; cf.~the survey \cite{Hobson2011}.

Securities with special exercise rights have been studied in the context of American-style derivatives in discrete-time settings. Bayraktar, Huang, and Zhou \cite{BayraktarHuangZhou2015} obtain a duality result for a somewhat different primal problem (cf.~\cite[Theorem~3.1]{BayraktarHuangZhou2015}) and show that duality may fail in their setting if they formulate their primal problem in analogy to the present paper; see also \cite{BayraktarZhou2017} for related results with portfolio constraints. Hobson and Neuberger~\cite{HobsonNeuberger2017} (based on Neuberger's earlier manuscript \cite{Neuberger2007}) resolve this issue by adopting a weak formulation for the primal problem: instead of optimizing only over martingale measures on a fixed filtered path space, the optimization there runs over filtered probability spaces supporting a martingale and thereby allows richer information structures and hence more stopping times. We also refer to \cite{AksamitDengOblojTan2018, BayraktarZhou2016.super-hedging, HobsonNeuberger2016} for recent developments in this regard. We note that all these papers permit significant restrictions on the set of possible price paths (e.g., binomial trees) while we allow all c\`adl\`ag paths. This difference may be the reason why strong duality holds in our setting without any relaxation of the primal problem.

The case of an Asian-style payoff as in \eqref{eqn:intro:American and Asian} has been studied in the case of a Dirac initial law $\mu$. For convex or concave $f$, Stebegg~\cite{Stebegg2014} shows strong duality and dual attainment. For nonnegative Lipschitz $f$, Cox and K\"allblad~\cite{CoxKallblad2017} obtain a PDE characterization of the maximal model-based price for finitely supported $\nu$. Bayraktar, Cox, and Stoev~\cite{BayraktarCoxStoev2018} provide a similar, but not identical PDE for the corresponding pricing problem for American-style payoffs as in \eqref{eqn:intro:American and Asian}. A consequence of our main duality result is that the Asian and American pricing problems are the same, so that both these PDEs have the same (viscosity) solution.

\paragraph{Organization of the paper.}
The remainder of the article is organized as follows. In Section~2, we recall basic results on the convex order and potential functions, introduce the generalized integral of \cite{BeiglbockNutzTouzi2017} and its relevant properties, and present the extension of the pathwise definition of the stochastic integral for finite variation integrands. Section~\ref{sec:problems} introduces the robust pricing and semi-static superhedging problems and presents our duality result. The duality between the auxiliary problems, the structure of their optimizers, and their relation to the robust pricing and superhedging problem are treated in Section~\ref{sec:auxiliary problems}. In Section~\ref{sec:examples}, we provide simple geometric constructions of primal and dual optimizers for risk reversals and butterfly spreads. Finally, some counterexamples are collected in Section~\ref{sec:counterexamples}.

\section{Preliminaries}

Fix a time horizon $T$ and let $\Omega = D([0,T];\RR)$ be the space of real-valued c\`adl\`ag paths on $[0,T]$. We endow $\Omega$ with the Skorokhod topology and denote by $\cF$ the corresponding Borel $\sigma$-algebra, by $X = (X_t)_{t\in[0,T]}$ the canonical process on $\Omega$, and by $\FF = (\cF_t)_{t\in[0,T]}$ the (raw) filtration generated by $X$. Unless otherwise stated, all probabilistic notions requiring a filtration pertain to $\FF$.

For any process $Y = (Y_t)_{t\in[0,T]}$ on $\Omega$, we set $Y_{0-} = 0$, so that the jump of $Y$ at time $0$ is $\Delta Y_0 = Y_0$.

\subsection{Martingale measures and convex order}

Let $\mu$ and $\nu$ be finite\footnote{As in \cite{BeiglbockNutzTouzi2017}, using finite measures as opposed to probability measures turns out to be useful.} measures on $\RR$ with finite first moment. We denote by $\Pi(\mu,\nu)$ the set of (continuous-time) couplings of $\mu$ and $\nu$, i.e., finite measures $P$ on $\Omega$ such that $P \circ X_0^{-1} = \mu$ and $P \circ X_T^{-1} = \nu$. If, in addition, the canonical process $X$ is a martingale under $P$ (defined in the natural way if $P$ is not a probability measure), then we write $P\in\cM(\mu,\nu)$ and say that $P$ is a (continuous-time) martingale coupling between $\mu$ and $\nu$.

We also consider discrete-time versions of these notions. To wit, we denote by $\Pi^d(\mu,\nu)$ the set of finite measures $Q$ on $\RR^2$ with marginal distributions $\mu$ and $\nu$ and by $\cM^d(\mu,\nu)$ the subset of measures $Q$ under which the canonical process on $\RR^2$ is a martingale (in its own filtration). The sets $\Pi^d(\mu,\theta,\nu)$ and $\cM(\mu,\theta,\nu)$ of finite measures on $\RR^3$ with prescribed marginal distributions are defined analogously.

We write $\mu \leq_c \nu$ if $\mu$ and $\nu$ are in \emph{convex order} in the sense that $\mu(\varphi) \leq \nu(\varphi)$ holds for any convex function $\varphi : \RR \to \RR$. In this case, $\mu$ and $\nu$ have the same mass and the same barycenter $\bary(\mu) := \frac{1}{\mu(\RR)}\int x \, \mu(\diff x)$.

The potential function $u_\mu:\RR\to[0,\infty]$ of $\mu$ is defined as 
\begin{align}
\label{eqn:potential function}
u_\mu(x)
&:= \int |x-y| \,\mu(\diff y).
\end{align}
We refer to \cite[Section~4.1]{BeiglbockJuillet2016} for basic properties of potential functions. In particular, the following relationship between the convex order, potential functions, and martingale measures is well known.

\begin{proposition}
\label{prop:Strassen}
Let $\mu$ and $\nu$ be finite measures with finite first moments and $\mu(\RR) = \nu(\RR)$. Then the following are equivalent: (i) $\mu \leq_c \nu$, (ii) $u_\mu \leq u_\nu$, (iii) $\cM^d(\mu,\nu) \neq \emptyset$, and (iv) $\cM(\mu,\nu)\neq\emptyset$.
\end{proposition}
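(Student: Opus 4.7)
The plan is to close the cycle (i) $\Rightarrow$ (ii) $\Rightarrow$ (iii) $\Rightarrow$ (iv) $\Rightarrow$ (i). The two implications at the ends of this chain are essentially tautological. For (i) $\Rightarrow$ (ii): for each fixed $x \in \RR$, the function $y \mapsto |x-y|$ is convex, so testing the convex order against this function gives $u_\mu(x) \leq u_\nu(x)$. For (iv) $\Rightarrow$ (i): given $P \in \cM(\mu,\nu)$ and convex $\varphi$, conditional Jensen applied to $\cEX[P]{X_T}{\cF_0} = X_0$ yields $\mu(\varphi) = \EX[P]{\varphi(X_0)} \leq \EX[P]{\varphi(X_T)} = \nu(\varphi)$.

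The implication (ii) $\Rightarrow$ (iii) is the analytical heart of the proposition. I would first extract the equal-barycenter condition from (ii): since $u_\rho(x) = |x|\rho(\RR) - \mathrm{sgn}(x) \int y\, \rho(dy)$ whenever $|x|$ exceeds the support of $\rho$, the inequality $u_\mu \leq u_\nu$ as $x \to \pm\infty$, combined with $\mu(\RR) = \nu(\RR)$, forces $\bary(\mu) = \bary(\nu)$. Armed with convex order, equal mass, and equal barycenter, I would invoke the classical one-dimensional Strassen theorem (cf.~\cite[Section~4.1]{BeiglbockJuillet2016}) to obtain a Markov kernel $K$ with $\bary(K(x,\cdot)) = x$ for $\mu$-a.e.\ $x$ and $\int K(x,\cdot)\, \mu(dx) = \nu$; then $Q(dx,dy) := \mu(dx)\, K(x, dy)$ lies in $\cM^d(\mu,\nu)$. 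This is where the real work lies, but it is classical and I would cite rather than reprove it.

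Finally, (iii) $\Rightarrow$ (iv) is a direct path-space embedding of a two-step discrete martingale as a c\`adl\`ag path that is constant until time $T$. Given $Q \in \cM^d(\mu, \nu)$, push $Q$ forward under the map $(y_0, y_1) \mapsto \omega$ with $\omega(t) = y_0$ for $t \in [0, T)$ and $\omega(T) = y_1$, yielding a finite measure $P$ on $\Omega$. The marginals $P \circ X_0^{-1} = \mu$ and $P \circ X_T^{-1} = \nu$ are immediate, and the raw filtration generated by the canonical process satisfies $\cF_t = \sigma(X_0)$ for $t < T$, so that $\cEX[P]{X_T}{\cF_t} = X_0 = X_t$ follows directly from the martingale property of $Q$. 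Hence $P \in \cM(\mu, \nu)$, closing the cycle.
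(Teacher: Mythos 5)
The paper offers no proof of this proposition at all: it records the equivalence as well known and points to \cite[Section~4.1]{BeiglbockJuillet2016}, so there is no argument of the paper's to diverge from. Your cycle is the standard one, and your embedding for (iii) $\Rightarrow$ (iv) is exactly the device the paper itself uses later (the map $\iota^n$ in the proof of Lemma~\ref{lem:average:weak approximation}, which also produces a path with a jump at $T$); the implications (i) $\Rightarrow$ (ii) and (iv) $\Rightarrow$ (i) are indeed immediate. So in substance your proposal is correct and at the same level of rigor as the paper, which simply cites the classical results.

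Two points deserve tightening. First, in the leg (ii) $\Rightarrow$ (iii) you say you are ``armed with convex order'', but (i) is not available at that point of the cycle; as written, you prove (i)$\wedge$(ii) $\Rightarrow$ (iii), and then (ii) alone no longer implies the other items. The repair is standard and lies in the same cited section: with equal masses and the barycenter identity you extracted, $u_\mu \leq u_\nu$ implies $\mu \leq_c \nu$ (every convex function is, up to an affine term, a mixture of the kernels $x \mapsto |x-a|$), after which Strassen applies; alternatively cite Strassen directly in its potential-function formulation. Relatedly, your formula $u_\rho(x) = |x|\rho(\RR) - \mathrm{sgn}(x)\int y\,\rho(\diff y)$ is literally valid only for compactly supported $\rho$; for general $\rho$ with finite first moment you should take $x \to \pm\infty$ and use dominated convergence, which still yields the two barycenter inequalities. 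Second, in (iii) $\Rightarrow$ (iv) the identity $\cF_t = \sigma(X_0)$ for $t < T$ is false on the Skorokhod space itself (the raw $\sigma$-field $\cF_t$ is generated by all $X_s$, $s \leq t$); it holds only up to $P$-null sets, i.e., on the image of your embedding. The clean way to state the martingale property is via the pushforward: for bounded $\cF_t$-measurable $Z = g((X_s)_{s\leq t})$ one has $E^P[X_T Z] = E^Q[Y_1\, \tilde g(Y_0)] = E^Q[Y_0\, \tilde g(Y_0)] = E^P[X_t Z]$ with $\tilde g(y) := g(y\,\1_{[0,t]})$, using the martingale property of $Q$. Both issues are cosmetic and easily fixed, but worth stating precisely.
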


An analogous result holds for three marginals $\mu$, $\theta$, $\nu$, the corresponding potential functions, and the set $\cM^d(\mu,\theta,\nu)$.

We recall the following definition from \cite[Definition~2.2]{BeiglbockNutzTouzi2017} (see also \cite[Definition~A.3]{BeiglbockJuillet2016}).

\begin{definition}
\label{def:irreducible}
A pair of finite measures $\mu \leq_c \nu$ is called \emph{irreducible} if the set $I = \{u_\mu < u_\nu\}$ is connected and $\mu(I) = \mu(\RR)$. In this situation, let $J$ be the union of $I$ and any endpoints of $I$ that are atoms of $\nu$; then $(I,J)$ is the \emph{domain} of $\mu \leq_c \nu$.
\end{definition}

We work with irreducible $\mu \leq_c \nu$ for the remainder of this article.

\subsection{Generalized integral}
\label{sec:BNT integral}

Let $\mu \leq_c \nu$ be irreducible with domain $(I,J)$. Beiglb\"ock and Juillet~\cite[Section~A.3]{BeiglbockJuillet2016} and Beiglb\"ock, Nutz, and Touzi \cite[Section~4]{BeiglbockNutzTouzi2017} appropriately extend the meaning of the expression $\mu(\varphi) + \nu(\psi)$ to the case where the individual integrals are not necessarily finite. We present here a slight extension of their work in order to deal with intermediate laws $\mu\leq_c\theta\leq_c\nu$ for which the pairs $\mu\leq_c\theta$ and $\theta\leq_c\nu$ may not be irreducible.

For the rest of this article, whenever we write $\mu \leq_c \nu$ for any two measures $\mu$ and $\nu$, we implicitly assume that both measures are finite and have a finite first moment. Throughout this section, we fix $\mu\leq_c\theta_1\leq_c\theta_2\leq_c\nu$.

\begin{definition}
\label{def:concave integral}
Let $\chi:J\to\RR$ be concave. Denote by $-\chi''$ the second derivative measure of the convex function $-\chi$ and by $\Delta \chi$ the possible jumps of $\chi$ at the endpoints of $I$. We set
\begin{align}
\label{eqn:def:concave integral}
(\theta_1-\theta_2)(\chi)
&:= \frac{1}{2}\int_I (u_{\theta_1}-u_{\theta_2}) \dd \chi'' + \int_{J\setminus I} \vert\Delta\chi\vert \,\diff(\theta_2 - \theta_1) \in [0,\infty].
\end{align}
The right-hand side is well defined in $[0,\infty]$ because $u_{\theta_1} \leq u_{\theta_2}$ on $I$ and $\theta_1(\lbrace b \rbrace) \leq \theta_2(\lbrace b \rbrace)$ for $b \in J\setminus I$.
\end{definition}

If $\theta_1 = \mu$ and $\theta_2 = \nu$, then \eqref{eqn:def:concave integral} coincides with Equation~(4.2) in \cite{BeiglbockNutzTouzi2017} because $\mu$ is concentrated on $I$. As in \cite{BeiglbockNutzTouzi2017}, there is an alternative representation of $(\theta_1-\theta_2)(\chi)$ in terms of an iterated integral with respect to a disintegration of a (one-step) martingale coupling on $\RR^2$:

\begin{lemma}
\label{lem:concave integral:disintegration}
Let $\chi:J\to\RR$ be concave and let $Q\in \cM^d(\theta_1,\theta_2)$. For any disintegration $Q = \theta_1 \otimes \kappa$, we have
\begin{align*}
(\theta_1-\theta_2)(\chi)
&= \int_J \left[ \chi(x_1) - \int_J \chi(x_2) \kappa(x_1,\diff x_2)\right] \theta_1(\diff x_1).
\end{align*}
\end{lemma}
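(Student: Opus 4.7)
The plan is to reduce the identity to an elementary family of concave functions, then extend via linearity and approximation. Both sides take values in $[0,\infty]$: the right-hand side has a nonnegative integrand because $\kappa(x_1,\cdot)$ is a martingale kernel of barycenter $x_1$, so Jensen's inequality applied to the concave $\chi$ yields $\chi(x_1)\geq \int\chi(x_2)\,\kappa(x_1,dx_2)$ for $\theta_1$-a.e.\ $x_1$; the left-hand side is nonnegative by construction, using $u_{\theta_1}\leq u_{\theta_2}$ on $I$, $\chi''\leq 0$, and $\theta_1(\{b\})\leq\theta_2(\{b\})$ for $b\in J\setminus I$ (the latter coming from the martingale coupling's marginal constraints).

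\textbf{Elementary case.} I first verify the identity for the building blocks $\chi_y(x):=-|x-y|$ with $y\in I$ and $\chi_b(x):=-\1_{\{b\}}(x)$ with $b\in J\setminus I$. For $\chi_y$, the distributional second derivative is $\chi_y''=-2\delta_y$, so
$$
(\theta_1-\theta_2)(\chi_y)
= \tfrac{1}{2}\int_I (u_{\theta_1}-u_{\theta_2})(y')\cdot(-2\delta_y(dy'))
= u_{\theta_2}(y) - u_{\theta_1}(y),
$$
while the right-hand side, using $\int|x-y|\,\theta_i(dx)=u_{\theta_i}(y)$ and the marginals of $Q$, evaluates to $-u_{\theta_1}(y)+u_{\theta_2}(y)$ by a direct computation. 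For $\chi_b$, the interior contribution vanishes and both sides equal $\theta_2(\{b\})-\theta_1(\{b\})$ by inspection.

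\textbf{General case.} After subtracting a supporting affine function at some interior point of $J$ ---which contributes zero to both sides, since $\theta_1$ and $\theta_2$ share mass and barycenter and $\kappa(x_1,\cdot)$ has barycenter $x_1$--- I may assume $\chi\leq 0$. A concave $\chi$ on $J$ then admits the representation
$$
-\chi(x) = \tfrac{1}{2}\int_I |x-y|\,(-\chi'')(dy) + \sum_{b\in J\setminus I}|\Delta\chi(b)|\,\1_{\{b\}}(x) + \ell_0(x),
$$
with $\ell_0$ affine (up to boundary adjustments depending on whether the endpoints of $I$ are finite). Applying Fubini on both sides and invoking the elementary case pointwise in the integration variable matches both sides term by term.

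\textbf{Main obstacle.} Justifying the representation and the Fubini interchange is delicate when $I$ is unbounded or $\chi$ is not $\theta_i$-integrable, which is precisely the regime motivating the generalized integral. My plan is to approximate $\chi$ from above by a decreasing sequence of piecewise affine concave functions $\chi_N\searrow\chi$ (for example, infima of finitely many affine majorants along a countable dense set of tangent lines); for each $\chi_N$ the decomposition involves only finitely many elementary pieces, Fubini is immediate, and the identity $J(\chi_N)=(\theta_1-\theta_2)(\chi_N)$ reduces to the elementary case by linearity. Monotone convergence on the nonnegative components of the decomposition (using that $-\chi_N''\to -\chi''$ as measures and $|\Delta\chi_N(b)|\to|\Delta\chi(b)|$ as $N\to\infty$) then transfers the identity to $\chi$. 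This parallels the two-marginal strategy of \cite{BeiglbockNutzTouzi2017}, to which the present generalization to intermediate laws $\theta_1\leq_c\theta_2$ between $\mu$ and $\nu$ is only a minor extension.
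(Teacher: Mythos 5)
Your elementary computations are correct (for $\chi_y=-|x-y|$ both sides equal $u_{\theta_2}(y)-u_{\theta_1}(y)$, and for $\chi_b=-\1_{\{b\}}$ both sides equal $\theta_2(\{b\})-\theta_1(\{b\})$), and the overall strategy—reduce to these building blocks and globalize—is in the spirit of the argument the paper invokes. But the globalization step, which is exactly where the difficulty of the lemma sits, has a genuine gap. First, your approximating sequence cannot do what you claim at the boundary: an infimum of finitely many affine majorants is continuous on $J$, so $\Delta\chi_N(b)=0$ for every $N$, and such $\chi_N$ decrease to the continuous envelope $\bar\chi$, not to $\chi$; hence ``$|\Delta\chi_N(b)|\to|\Delta\chi(b)|$'' is false whenever $\chi$ jumps at a point of $J\setminus I$. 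This part is fixable by splitting $\chi=\bar\chi-|\Delta\chi|\1_{J\setminus I}$ first and treating the jump by your elementary case—this is precisely what the paper does. Second, and more seriously, there is no monotonicity to support the limit passage. If $\chi_N=\min_{i\le N}a_i$ with $a_i$ tangent lines at points $t_1<\dots<t_k$, then $-\chi_N''$ is obtained from $-\chi''$ by discarding all curvature outside $(t_1,t_k]$ and lumping the curvature of each cell $(t_j,t_{j+1}]$ at the cell's $(-\chi'')$-barycenter; this sequence of measures is not increasing, and the disintegration-side integrand $\chi_N(x_1)-\int\chi_N\,d\kappa(x_1,\cdot)$ is not monotone in $N$ either (both terms decrease). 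So ``monotone convergence on the nonnegative components'' does not apply; what one gets for free (vague convergence of $-\chi_N''$ on $I$, Fatou for the nonnegative integrands) are only the two liminf inequalities $(\theta_1-\theta_2)(\chi)\le\liminf_N(\theta_1-\theta_2)(\chi_N)$ and $J(\chi)\le\liminf_N J(\chi_N)$, which point in the wrong direction for concluding $J(\chi)=(\theta_1-\theta_2)(\chi)$. The missing upper bound $\int_I(u_{\theta_2}-u_{\theta_1})\,\diff(-\chi_N'')\to\int_I(u_{\theta_2}-u_{\theta_1})\,\diff(-\chi'')$ is genuinely delicate: $-\chi''$ may have infinite mass near $\partial I$ while the test function merely vanishes there, and barycentric lumping in the outermost cells can inflate the integral, so this step needs a real argument (and may fail for an unlucky enumeration of tangent points).

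Two ways to repair it within your framework: (a) approximate the continuous part by the concave functions obtained from restricting $-\chi''$ to a compact exhaustion $[l_m,r_m]\uparrow I$ and extending tangentially outside; then the curvature measures increase to $-\chi''$ and, via your elementary kernel $g_y(x_1)=-|x_1-y|+\int|x_2-y|\,\kappa(x_1,\diff x_2)\ge0$, the disintegration-side integrands increase as well, so monotone convergence really applies; or (b) avoid approximation altogether by proving the pointwise tangent identity $\chi(x_1)+\chi'(x_1)(x_2-x_1)-\chi(x_2)=\int(\text{nonnegative hinge kernel})\,\diff(-\chi'')+|\Delta\chi|\1_{J\setminus I}(x_2)$, integrating against $\kappa(x_1,\diff x_2)\,\theta_1(\diff x_1)$, using the martingale property to remove the gradient term, and applying Tonelli. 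Route (b) is essentially the proof of \cite[Lemma~4.1]{BeiglbockNutzTouzi2017}, which the paper cites verbatim for the continuous part (observing that irreducibility is not used) before adding the boundary-jump correction; as written, your proposal does not yet substitute for that argument.
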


\begin{proof}
The proof of \cite[Lemma~4.1]{BeiglbockNutzTouzi2017} does not use that $\mu \leq_c \nu$ is irreducible. Moreover, for $\bar\chi:J \to\RR$ concave and continuous, the same arguments as in the proof of \cite[Lemma~4.1]{BeiglbockNutzTouzi2017} yield
\begin{align}
\label{eqn:lem:concave integral:disintegration:pf:continuous}
\frac{1}{2}\int_I (u_{\theta_1}-u_{\theta_2}) \dd\bar\chi''
&= \int_J \left[ \bar\chi(x_1) - \int_J \bar\chi(x_2) \,\kappa(x_1,\diff x_2) \right] \theta_1(\diff x_1).
\end{align}
(Note that $\int_{J} \bar\chi(x_2) \,\kappa(x_1,\diff x_2) = \bar\chi(x_1)$ for boundary points $x_1 \in J\setminus I$ because $\kappa$ is a martingale kernel concentrated on $J$.)

For a general concave $\chi:J\to\RR$, we write $\chi = \bar\chi - \vert\Delta\chi\vert\1_{J\setminus I}$ with $\bar\chi$ continuous. Then we can replace $\bar\chi$ with $\chi$ on the left-hand side of \eqref{eqn:lem:concave integral:disintegration:pf:continuous} and the integrand on the right-hand side reads as
\begin{align*}
\chi + \vert\Delta\chi\vert\1_{J\setminus I} - \int_J \chi(x_2)\,\kappa(\cdot,\diff x_2) - \int_{J\setminus I} \vert\Delta\chi(x_2)\vert\,\kappa(\cdot,\diff x_2).
\end{align*}
Integrating this against $\theta_1$ and using Fubini's theorem yields
\begin{align*}
\int_J \left[ \chi(x_1) - \int_J \chi(x_2)\,\kappa(x_1,\diff x_2) \right]\theta(\diff x_1) + \int_{J\setminus I} \vert\Delta\chi\vert \dd\theta_1 - \int_{J\setminus I} \vert\Delta\chi\vert\dd\theta_2.
\end{align*}
Together with \eqref{eqn:lem:concave integral:disintegration:pf:continuous}, this proves the claim.
\end{proof}

It can be shown as in \cite{BeiglbockNutzTouzi2017} that $(\theta_1-\theta_2)(\chi) = \theta_1(\chi) - \theta_2(\chi)$ if at least one of the individual integrals is finite.

We can now define the integral $\theta_1(\varphi) + \theta_2(\psi)$ as in \cite[Definition~4.7]{BeiglbockNutzTouzi2017}.

\begin{definition}
\label{def:BNT integral}
Let $\varphi:J \to\ol\RR$ and $\psi:J\to\ol\RR$ be Borel functions. If there exists a concave function $\chi:J\to\RR$ such that $\varphi-\chi \in L^1(\theta_1)$ and $\psi + \chi \in L^1(\theta_2)$, we say that $\chi$ is a \emph{concave moderator} for $(\varphi,\psi)$ with respect to $\theta_1\leq_c\theta_2$ and set
\begin{align}
\label{eqn:def:BNT integral}
\theta_1(\varphi) + \theta_2(\psi)
&:= \theta_1(\varphi - \chi) + \theta_2(\psi + \chi) + (\theta_1-\theta_2)(\chi) \in (-\infty,\infty].
\end{align}
\end{definition}

As in \cite{BeiglbockNutzTouzi2017}, the expression $\theta_1(\varphi) + \theta_2(\psi)$ defined in \eqref{eqn:def:BNT integral} does not depend on the choice of the concave moderator.

\begin{definition}
\label{def:Lc}
We write $L^c(\theta_1,\theta_2)$ for the space of pairs of Borel functions $\varphi,\psi:J\to\ol\RR$ which admit a concave moderator $\chi$ with respect to $\theta_1\leq_c\theta_2$ such that $(\theta_1-\theta_2)(\chi)<\infty$.
\end{definition}

We next present additional properties of the notions introduced above.

\begin{lemma}
\label{lem:Lc:finite}
Let $(\varphi,\psi) \in L^c(\theta_1,\theta_2)$.
\begin{enumerate}
\item $\varphi$ is finite on atoms of $\theta_1$. If $\varphi$ is concave, then $\varphi < \infty$ on $J$ and $\varphi > -\infty$ on the interior of the convex hull of the support of $\theta_1$.
\item $\psi$ is finite on atoms of $\theta_2$. If $\psi$ is convex, then $\psi > -\infty$ on $J$ and $\psi < \infty$ on the interior of the convex hull of the support of $\theta_2$.
\item If $a,b : \RR \to \RR$ are affine, then $(\varphi + a,\psi+b) \in L^c(\theta_1,\theta_2)$ and
\begin{align*}
\theta_1(\varphi+a) + \theta_2(\psi+b)
&= \lbrace \theta_1(\varphi)+\theta_2(\psi) \rbrace + \theta_1(a) + \theta_2(b).
\end{align*}
\end{enumerate}
\end{lemma}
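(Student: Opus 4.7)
Fix a concave moderator $\chi:J\to\RR$ for $(\varphi,\psi)$ with respect to $\theta_1\leq_c\theta_2$ as in Definition~\ref{def:BNT integral}: $\chi$ is real-valued and concave on $J$, $\varphi-\chi\in L^1(\theta_1)$, $\psi+\chi\in L^1(\theta_2)$, and $(\theta_1-\theta_2)(\chi)<\infty$.

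The atomic statements in (i) and (ii) are immediate: at an atom $x$ of $\theta_1$, the singleton contribution $\theta_1(\{x\})|\varphi-\chi|(x)$ to the finite $L^1$-integral must itself be finite, and since $\chi(x)\in\RR$ this forces $\varphi(x)\in\RR$; the analogous statement for $\psi$ at atoms of $\theta_2$ is identical. For the remaining assertions in (i), I would argue by contradiction using the one-sided propagation of infinite values under concavity. If $\varphi$ is concave and $\varphi(x_0)=+\infty$ for some $x_0\in J$, pick any $x_1\in\supp\theta_1$ where $\varphi$ is finite (such a point exists because $\varphi$ is $\theta_1$-a.e.\ finite and $\supp\theta_1$ has full $\theta_1$-mass); then the concavity inequality forces $\varphi\equiv+\infty$ on the open segment $(x_0,x_1)$, which accumulates at $x_1\in\supp\theta_1$ and therefore carries positive $\theta_1$-mass on every neighborhood of $x_1$, contradicting $\varphi-\chi\in L^1(\theta_1)$ since $\chi$ is bounded on such a compact neighborhood. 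Symmetrically, if $\varphi(x_0)=-\infty$ for $x_0$ interior to $\mathrm{conv}(\supp\theta_1)$, then as $-\varphi$ is convex and $\{-\varphi<+\infty\}$ is convex but does not contain $x_0$, it is contained in either $(-\infty,x_0)\cap J$ or $(x_0,+\infty)\cap J$, so $\varphi\equiv-\infty$ on the complementary half-line in $J$; by interiority this half-line contains a point $x_2\in\supp\theta_1$ and hence a neighborhood of $x_2$ of positive $\theta_1$-mass on which $\varphi=-\infty$, again contradicting $L^1$. Part~(ii) follows from the same arguments applied to the convex function $\psi$ with the convex moderator $-\chi$.

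For part (iii), take $\tilde\chi:=\chi+a$ as a moderator for $(\varphi+a,\psi+b)$; it is concave and real-valued on $J$ as the sum of a concave and an affine function. The three defining conditions transfer directly: $(\varphi+a)-\tilde\chi=\varphi-\chi\in L^1(\theta_1)$; $(\psi+b)+\tilde\chi=(\psi+\chi)+(a+b)\in L^1(\theta_2)$ because affine functions are integrable against the finite-first-moment measure $\theta_2$; and $(\theta_1-\theta_2)(\tilde\chi)=(\theta_1-\theta_2)(\chi)<\infty$ since affine functions have zero second-derivative measure and no jumps, so formula~\eqref{eqn:def:concave integral} is unchanged. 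Unfolding Definition~\ref{def:BNT integral} with this moderator then gives
\begin{align*}
\theta_1(\varphi+a)+\theta_2(\psi+b)
&=\theta_1(\varphi-\chi)+\theta_2(\psi+\chi)+\theta_2(a+b)+(\theta_1-\theta_2)(\chi)\\
&=\{\theta_1(\varphi)+\theta_2(\psi)\}+\theta_2(a+b),
\end{align*}
and the identity $\theta_2(a+b)=\theta_1(a)+\theta_2(b)$ follows from $\theta_1(a)=\theta_2(a)$, an immediate consequence of $\theta_1\leq_c\theta_2$ (same mass and barycenter) applied to the affine function~$a$.

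The main technical obstacle I foresee lies in the ``$\varphi<+\infty$ on $J$'' portion of (i) in the degenerate case when $x_0$ is a boundary point of $J$ that is not in $\supp\theta_1$; the propagation argument above may not by itself produce a contradiction, and the clean resolution will likely require combining the constraint $(\theta_1-\theta_2)(\chi)<\infty$ with the disintegration formula of Lemma~\ref{lem:concave integral:disintegration} against a kernel $Q\in\cM^d(\theta_1,\theta_2)$ that transports mass near the boundary into $I$ via the martingale structure.
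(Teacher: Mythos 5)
Your part (iii) is correct and is essentially the paper's own argument: the paper keeps the original moderator $\chi$ itself (observing that affine functions are $\theta_1$- and $\theta_2$-integrable, so $\chi$ still moderates $(\varphi+a,\psi+b)$), whereas you shift to $\tilde\chi=\chi+a$; both choices leave \eqref{eqn:def:concave integral} unchanged and unfold to the same identity, and your final step $\theta_1(a)=\theta_2(a)$ (equal mass and barycenter) is exactly the ``direct computation'' the paper alludes to. Note that the paper proves \emph{only} (iii); parts (i)--(ii) are left to the reader, so there is nothing to compare there. Within your treatment of (i)--(ii), the atom statements are fine, and your argument for ``$\varphi>-\infty$ on the interior of the convex hull of $\supp\theta_1$'' is sound: the effective domain of the convex function $-\varphi$ is convex, so $\{\varphi=-\infty\}$ contains a whole half-line on one side of $x_0$, and interiority provides a support point $x_2$ strictly on that side whose nearby $\theta_1$-mass lies inside that half-line.

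The genuine gap is in your proof of ``$\varphi<\infty$ on $J$''. From $\varphi(x_0)=+\infty$ and a point $x_1\in\supp\theta_1$ with $\varphi(x_1)$ finite you correctly get $\varphi\equiv+\infty$ on the open segment $(x_0,x_1)$, but the conclusion does not follow: the fact that every neighborhood of $x_1$ has positive $\theta_1$-mass does \emph{not} mean that $(x_0,x_1)$ carries mass -- the mass may sit at $x_1$ itself or on the far side of $x_1$ (take $\theta_1=\delta_{x_1}$, or $\theta_1$ purely atomic with a gap around $x_0$). Since membership in $L^c(\theta_1,\theta_2)$ only constrains $\varphi$ $\theta_1$-a.e., no integrability contradiction arises. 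Indeed, with the bare pointwise-inequality (or hypograph) notion of an $\ol\RR$-valued concave function, one can take $\mu=\delta_0$, $\nu=\theta_1=\theta_2=\tfrac12(\delta_{-1}+\delta_1)$, $\psi=\chi\equiv0$, and $\varphi=0$ on $\{-1,1\}$ but $\varphi=+\infty$ on $(-1,1)$: this is concave, lies in $L^c$, yet is infinite on most of $J$. So this sub-claim really rests on the convention that concave functions do not take the value $+\infty$ (properness), under which it is immediate; it cannot be derived from the $L^1$ condition alone. Your closing remark misplaces the difficulty (it is not confined to boundary points of $J$ outside $\supp\theta_1$) and the proposed fix via $(\theta_1-\theta_2)(\chi)<\infty$ and Lemma~\ref{lem:concave integral:disintegration} will not close it, since that constraint concerns $\chi$, not the values of $\varphi$ off the support of $\theta_1$.
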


\begin{proof}
We only prove (iii). Let $\chi$ be a concave moderator for $(\varphi,\psi)$ with respect to $\theta_1\leq_c\theta_2$. Then $\varphi - \chi \in L^1(\theta_1)$, $\psi + \chi \in L^1(\theta_2)$, and $(\theta_1-\theta_2)(\chi) < \infty$. Being affine, $a$ and $b$ are $\theta_1$- and $\theta_2$-integrable. It follows that $\chi$ is also a concave moderator for $(\varphi + a, \psi + b)$ with respect to $\theta_1 \leq_c \theta_2$ and that $(\varphi + a, \psi + b) \in L^c(\theta_1,\theta_2)$. The last assertion is a direct computation.
\end{proof}

\begin{remark}
\label{rem:psi finite}
Recall that $I$ is the interior of the convex hull of the support of $\nu$ and that $J$ is the union of $I$ and any endpoints of $I$ that are atoms of $\nu$. Hence, Lemma~\ref{lem:Lc:finite}~(ii) shows in particular, that if $(\varphi,\psi)\in L^c(\theta_1,\nu)$ with $\psi$ convex, then $\psi$ is finite on $J$.
\end{remark}

We conclude this section with a number of calculation rules for the integrals defined above when $\varphi$ is concave and $\psi$ is convex.

\begin{lemma}
\label{lem:ConvexIntegrationRules}
Let $\mu\leq_c\theta_1\leq_c\theta_2\leq_c\theta_3\leq_c\nu$ (where the pair $\mu\leq_c\nu$ is irreducible) and let $(\varphi,\psi)\in L^c(\theta_1,\theta_3)$ be such that $\varphi$ is concave and finite, $\psi$ is convex and finite, and $\varphi + \psi$ is bounded from below by a concave $\theta_3$-integrable function.
\begin{enumerate}
\item $\varphi$ and $-\psi$ are concave moderators for $(\varphi,\psi)$ with respect to $\theta_1\leq_c\theta_3$.
\item $(\varphi,\psi) \in L^c(\theta_1,\theta_2) \cap L^c(\theta_2,\theta_3)$.
\item $\theta_1(\varphi) + \theta_2(\psi) \leq \theta_1(\varphi) + \theta_3(\psi)$.
\item $\theta_2(\varphi) + \theta_3(\psi) \leq \theta_1(\varphi) + \theta_3(\psi)$.
\end{enumerate}
\end{lemma}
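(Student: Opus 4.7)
The plan is to prove (i) directly and then derive (ii)--(iv) by exploiting the additive decomposition of the concave integral along the chain $\theta_1 \leq_c \theta_2 \leq_c \theta_3$.

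For (i), since $\varphi - \varphi = 0$ and $\psi + (-\psi) = 0$ trivially lie in every $L^1(\theta_i)$, verifying that $\varphi$ and $-\psi$ are concave moderators reduces to showing $\psi + \varphi \in L^1(\theta_3)$ and $\varphi + \psi \in L^1(\theta_1)$, respectively. The lower bound $\varphi + \psi \geq \eta$ gives $(\varphi + \psi)^- \leq \eta^-$. Since $-\eta$ is convex, the convex order yields $\theta_i(\eta^-) \leq \theta_3(\eta^-) < \infty$ for $i=1,2$; combined with $\eta^+ \in L^1(\theta_i)$ (from concavity of $\eta$ and the finite first moments of the $\theta_i$), this gives $\eta \in L^1(\theta_i)$ for all three indices, hence $(\varphi + \psi)^- \in L^1(\theta_i)$.

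The main obstacle is controlling $(\varphi + \psi)^+$. Fix a concave moderator $\chi_0$ witnessing $(\varphi,\psi) \in L^c(\theta_1,\theta_3)$, so that $(\theta_1 - \theta_3)(\chi_0) < \infty$, and decompose $\varphi + \psi = (\varphi - \chi_0) + (\psi + \chi_0)$. The bound $(\varphi + \psi)^+ \leq (\varphi - \chi_0)^+ + (\psi + \chi_0)^+$ together with $\psi + \chi_0 \in L^1(\theta_3)$ reduces the $\theta_3$ claim to $(\varphi - \chi_0)^+ \in L^1(\theta_3)$. I apply the pointwise estimate $(\varphi - \chi_0)^+ \leq \varphi^+ + \chi_0^-$; the first summand lies in $L^1(\theta_3)$ by concavity of $\varphi$ and the finite first moment of $\theta_3$, and the finiteness of $\theta_3(\chi_0^-)$ follows from $(\theta_1-\theta_3)(\chi_0) < \infty$: in the generic case $\theta_1(\chi_0) \in \RR$, the identity $(\theta_1-\theta_3)(\chi_0) = \theta_1(\chi_0) - \theta_3(\chi_0)$ immediately gives $\theta_3(\chi_0) > -\infty$, while the degenerate case $\theta_1(\chi_0) = -\infty$ is handled by first improving to the concave moderator $\chi_0 \wedge \varphi$, whose moderator properties are routine to verify. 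A parallel argument on the $\theta_1$ side, using the bound $(\psi + \chi_0)^+ \leq \psi^+ + \chi_0^+$ combined with convexity of $\psi^+$ and the convex order $\theta_1 \leq_c \theta_3$, yields $\varphi + \psi \in L^1(\theta_1)$.

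For (ii), formula \eqref{eqn:def:concave integral} is linear in the second-derivative measure of $\chi$ and additive in the potential-function differences $u_{\theta_1}-u_{\theta_3} = (u_{\theta_1}-u_{\theta_2}) + (u_{\theta_2}-u_{\theta_3})$, so for any concave $\chi$,
\[
(\theta_1-\theta_3)(\chi) = (\theta_1-\theta_2)(\chi) + (\theta_2-\theta_3)(\chi),
\]
with each summand in $[0,\infty]$. Specializing to $\chi \in \{\varphi, -\psi\}$ and noting that $(\theta_1-\theta_3)(\varphi)$ and $(\theta_1-\theta_3)(-\psi)$ are finite --- the latter following from (i) and moderator-independence of the generalized integral applied to the pairs $(\chi_0, \varphi)$ and $(\chi_0, -\psi)$ --- each subintegral is finite. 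Combined with the $L^1$-integrabilities from (i), the moderator $-\psi$ witnesses $(\varphi,\psi) \in L^c(\theta_1,\theta_2)$ and the moderator $\varphi$ witnesses $(\varphi,\psi) \in L^c(\theta_2,\theta_3)$, proving (ii). For (iii), computing $\theta_1(\varphi) + \theta_i(\psi)$ with the moderator $-\psi$ yields $\theta_1(\varphi + \psi) + (\theta_1 - \theta_i)(-\psi)$ for $i \in \{2,3\}$, so the difference of the $i=3$ and $i=2$ values equals $(\theta_2-\theta_3)(-\psi) \geq 0$. A symmetric calculation with moderator $\varphi$ proves (iv).
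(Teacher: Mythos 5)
Your reductions of (ii)--(iv) to additivity of the concave integral, $(\theta_1-\theta_3)(\chi)=(\theta_1-\theta_2)(\chi)+(\theta_2-\theta_3)(\chi)$, and to moderator-independence (to get finiteness of $(\theta_1-\theta_3)(\varphi)$ and $(\theta_1-\theta_3)(-\psi)$) are correct and essentially the paper's route, which uses the monotonicity $(\theta_2-\theta_3)(\varphi)\leq(\theta_1-\theta_3)(\varphi)$ in place of additivity. The gap is in (i), in the control of $(\varphi+\psi)^+$. On the $\theta_3$ side you reduce to $(\varphi-\chi_0)^+\in L^1(\theta_3)$ and bound $(\varphi-\chi_0)^+\leq\varphi^++\chi_0^-$, so you need $\theta_3(\chi_0^-)<\infty$; on the $\theta_1$ side you bound $(\psi+\chi_0)^+\leq\psi^++\chi_0^+$ and invoke $\theta_1(\psi^+)\leq\theta_3(\psi^+)$, so you need $\theta_3(\psi^+)<\infty$. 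Neither finiteness follows from the hypotheses: the situation the generalized integral is built for is precisely the one where $\varphi$, $\psi$ and \emph{every} admissible moderator fail to be individually integrable while the combined expressions stay finite. Take $\theta_1=\mu$, $\theta_3=\nu$, $\varphi=\bar\varphi$, $\psi=\bar\psi$ as in Example~\ref{ex:NoRegularIntegrability}: the hypotheses of the lemma hold ($\bar\varphi+\bar\psi=f$ is bounded), $\chi_0=\bar\varphi$ is a legitimate moderator, but $\mu(\bar\varphi)=-\infty$, $\nu(\bar\varphi^-)=\infty$ and $\nu(\bar\psi^+)=\infty$ because $\bar\varphi,\bar\psi$ have quadratic growth and the marginals have no second moment. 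So your ``generic case'' assumption $\theta_1(\chi_0)\in\RR$ fails, the needed bound $\theta_3(\chi_0^-)<\infty$ is false --- in fact for every moderator, since $\psi\leq(\psi+\chi)+\chi^-$ with $\psi+\chi\in L^1(\nu)$ forces $\nu(\chi^-)=\infty$ --- and $\theta_3(\psi^+)=\infty$ breaks the parallel $\theta_1$-side argument. The proposed repair $\chi_0\wedge\varphi$ does not help: $(\chi_0\wedge\varphi)^-=\max(\chi_0^-,\varphi^-)\geq\chi_0^-$, and $\theta_1(\chi_0\wedge\varphi)=-\infty$ whenever $\theta_1(\chi_0)=-\infty$ (because $\varphi-\chi_0\in L^1(\theta_1)$), so the identity $(\theta_1-\theta_3)(\chi)=\theta_1(\chi)-\theta_3(\chi)$ remains unavailable.

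The failure is structural: any estimate that splits $(\varphi+\psi)^+$ into pieces required to be individually integrable discards exactly the cancellation between the concave and convex parts that the lemma must exploit. The paper keeps this cancellation by arguing two-dimensionally: fixing $Q\in\cM^d(\theta_1,\theta_3)$, it uses the pointwise sandwich $\xi(x_3)\leq\varphi(x_3)+\psi(x_3)\leq\varphi(x_1)+\psi(x_3)+\varphi'(x_1)(x_3-x_1)$ $Q$-a.e.\ on $J\times J$, shows the upper bound is $Q$-integrable by the iterated-integration (Fubini) argument of \cite[Remark~4.10]{BeiglbockNutzTouzi2017} --- the martingale term vanishes in the inner integral and the remaining iterated integral is finite by the $L^c$-membership --- and concludes $\varphi+\psi\in L^1(\theta_3)$; a second decomposition of $\varphi(x_1)+\psi(x_1)$ along the same coupling, whose correction term integrates to $-(\theta_1-\theta_3)(-\psi)\leq0$, gives $\varphi+\psi\in L^1(\theta_1)$. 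Your lower-bound treatment of $(\varphi+\psi)^-$ via $\eta^-$ is fine, but to close (i) you need an argument of this coupling type rather than the termwise splitting.
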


\begin{proof}
Denote by $\xi$ a concave $\theta_3$-integrable lower bound for $\varphi+\psi$. By the concavity of $\xi$, we have $\theta_1(\xi) \geq \theta_2(\xi) \geq \theta_3(\xi)>-\infty$, so that $\xi$ is also $\theta_1$- and $\theta_2$-integrable.

(i): Regarding the concave moderator property of $\varphi$, it suffices to show that $\varphi + \psi$ is $\theta_3$-integrable. Denote by $\varphi'$ the left-derivative of the concave function $\varphi$ on $I$. Then for $(x_1,x_3)\in I\times J$,
\begin{align}
\label{eqn:lem:ConvexIntegrationRules:pf:sandwich1}
\xi(x_3)
&\leq \varphi(x_3) + \psi(x_3)
\leq \varphi(x_1) + \psi(x_3) + \varphi'(x_1)(x_3-x_1).
\end{align}
Fix any $Q \in \cM^d(\theta_1,\theta_3)$. Then \eqref{eqn:lem:ConvexIntegrationRules:pf:sandwich1} also holds $Q$-a.e.~on $J\times J$ (setting $\varphi' = 0$ on $J\setminus I$ for example); this uses that any mass in a point of $J \setminus I$ stays put under a martingale transport plan. Since $\xi$ is $\theta_3$-integrable, the negative part of the right-hand side in \eqref{eqn:lem:ConvexIntegrationRules:pf:sandwich1} is $Q$-integrable. Then it can be argued as in \cite[Remark~4.10]{BeiglbockNutzTouzi2017} that the right-hand side in \eqref{eqn:lem:ConvexIntegrationRules:pf:sandwich1} is $Q$-integrable. It follows that $\varphi+\psi$ is $\theta_3$-integrable.

Regarding the assertion about $-\psi$, it suffices to show that $\varphi + \psi$ is $\theta_1$-integrable. We have 
\begin{align}
\label{eqn:lem:ConvexIntegrationRules:pf:sandwich2}
\begin{split}
\xi(x_1)
&\leq \varphi(x_1) + \psi(x_1)\\
&= [\varphi(x_1) + \psi(x_3) + \varphi'(x_1)(x_3-x_1)] \\
&\qquad+ [\psi(x_1) - \psi(x_3) - \varphi'(x_1)(x_3-x_1)]\quad Q\text{-a.e.~on }J\times J.
\end{split}
\end{align}
By the above, the first term on the right-hand side is $Q$-integrable. Thus, the negative part of the second term is also $Q$-integrable. Hence, we may integrate the second term iteratively using Fubini's theorem as in \cite[Remark~4.10]{BeiglbockNutzTouzi2017}. The $Q$-integral equals $-(\theta_1-\theta_3)(-\psi) \leq 0$. In particular, the right-hand side in \eqref{eqn:lem:ConvexIntegrationRules:pf:sandwich2} is $Q$-integrable. It follows that $\varphi+\psi$ is $\theta_1$-integrable.

(ii)--(iv): We know from the above that $\varphi+\psi$ is $\theta_3$-integrable. It follows that $\varphi$ is a concave moderator for $(\varphi,\psi)$ with respect to $\theta_2\leq_c\theta_3$. Because $\theta_1\leq_c\theta_2$, we have that $u_{\theta_1} \leq u_{\theta_2}$ and $\theta_1(\lbrace b \rbrace) \leq \theta_2(\lbrace b \rbrace)$ for $b \in J \setminus I$. Thus, $(\theta_2 - \theta_3)(\varphi) \leq (\theta_1-\theta_3)(\varphi) < \infty$ (cf.~Definition~\ref{def:concave integral}). Hence, $(\varphi,\psi) \in L^c(\theta_2,\theta_3)$ and 
\begin{align*}
\theta_2(\varphi) + \theta_3(\psi)
&= \theta_2(\varphi-\varphi) + \theta_3(\varphi+\psi) + (\theta_2-\theta_3)(\varphi)\\
&\leq \theta_1(\varphi-\varphi) + \theta_3(\varphi+\psi) + (\theta_1-\theta_3)(\varphi)\\
&= \theta_1(\varphi) + \theta_3(\psi).
\end{align*}
One can show similarly that $(\varphi,\psi)\in L^c(\theta_1,\theta_2)$ and that $\theta_1(\varphi) + \theta_2(\psi) \leq \theta_1(\varphi) + \theta_3(\psi)$.
\end{proof}

\subsection{Pathwise stochastic integration}
\label{sec:stochastic integration}

For any $\FF$-adapted c\`adl\`ag process $H$ of finite variation, the integral $H_-\bullet X_T = \int_{(0,T]} H_{t-} \dd X_t$ can be defined \emph{pathwise}, i.e., for each $\omega \in \Omega$ individually, via integration by parts as follows:
\begin{align}
\label{eqn:integral:ibp}
H_-\bullet X_T
&:= X_T H_T - X_0 H_0 - \int_{(0,T]} X_t \dd H_t,
\end{align}
where the integral on the right-hand side is the pathwise Lebesgue--Stieltjes integral. Setting $H_{0-} = 0$, so that $\Delta H_0 = H_0$, we can recast  \eqref{eqn:integral:ibp} as
\begin{align}
\label{eqn:integral:ibp:alternative}
H_-\bullet X_T
&= (X_T-X_0)H_0 + \int_{(0,T]} (X_T - X_t) \dd H_t.
\end{align}
For any martingale measure $P$, if the (standard) stochastic integral of $H_-$ with respect to $X$ exists, then it is $P$-indistinguishable from the pathwise stochastic integral.

We need to give a sensible meaning to the integral $H_-\bullet X_T$ for certain integrands $H$ which are not necessarily of finite variation, but may diverge in finite time.

\begin{example}
\label{ex:integral}
The following example motivates our extension of the pathwise stochastic integral for finite variation integrands. Let $\mu = \delta_0$ and $\nu = \frac{1}{2}\delta_{-1} + \frac{1}{2}\delta_1$. Then $\mu \leq_c \nu$ are irreducible with domain $(I,J) = ((-1,1),[-1,1])$. Consider a payoff function $f$ which is convex on $[-1,1]$ and has infinite (one-sided) derivatives at $-1$ and $1$, e.g., $f(x) = 1-\sqrt{1-x^2}\1_{[-1,1]}(x)$,  A semi-static superhedge for the Asian-style derivative $f(\frac{1}{T}\int_0^T X_t \dd t)$ can be derived as follows. By Jensen's inequality and the convexity of $f$, for every path of $X$ that evolves in $[-1,1]$,
\begin{align*}
f\Big(\frac{1}{T}\int_0^T X_t \dd t\Big)
&\leq \int_0^T f(X_t) \,\frac{\dd t}{T}
\leq \int_0^T \big(f(X_T) - f'(X_t)(X_T-X_t)\big) \,\frac{\diff t}{T}\\
&= f(X_T) - \int_0^T (X_T-X_t) f'(X_t)\,\frac{\diff t}{T}.
\end{align*}
Comparing this with \eqref{eqn:integral:ibp:alternative}, a semi-static superhedge for the Asian-style derivative is obtained from a European-style derivative with payoff $f(X_T)$ maturing at $T$ and a dynamic trading strategy $H$ with $H_0 = 0$ and dynamics $\diff H_t = -f'(X_t) \frac{\diff t}{T}$. Then $H$ is of finite variation whenever $X$ stays away from the boundaries of $(-1,1)$. But, as $X$ approaches $-1$ or $1$, the derivative $f'(X_t)$ becomes arbitrarily large (in absolute value), and $H$ may fail to be of finite variation. It turns out, however, that the integral $\int_0^T (X_T-X_t)f'(X_t) \, \frac{\diff t}{T}$ is still well defined on these paths. The reason is that when paths of $X$ come arbitrarily close to $1$, say, then for any martingale coupling $P\in\cM(\mu,\nu)$, $X_T=1$ $P$-a.s.~on these paths (because $J = [-1,1]$), so that $X_T-X_t$ becomes small and counteracts the growth of $f'(X_t)$.
\end{example}

We shall define a pathwise stochastic integral for $\FF$-adapted c\`adl\`ag integrators $X$ and integrands $\hat H_-$ of the form
\begin{align}
\label{eqn:integral:integrand}
\hat H_t
&= h_0 + \int_{(0,t]} h_s \dd Y_s
\end{align}
for an $\FF$-adapted c\`adl\`ag process $Y = (Y_t)_{t\in[0,T]}$ of finite variation and an $\FF$-adapted process $h = (h_t)_{t\in[0,T]}$---even in certain situations where the right-hand side of \eqref{eqn:integral:integrand} is not finite. The idea is to formally substitute \eqref{eqn:integral:integrand} into \eqref{eqn:integral:ibp:alternative}, formally use the associativity of Lebesgue--Stieltjes integrals, and then employ the resulting expression as a definition for a pathwise stochastic integral. We first introduce a set of integrands for this integral.

\begin{definition}
\label{def:integrand}
Let $\Omega' \subset D([0,T];\RR)$. We denote by $L(\Omega')$ the set of pairs $(h,Y)$ consisting of an $\FF$-adapted process $h$ and an $\FF$-adapted c\`adl\`ag process $Y$ of finite variation such that the process $((X_T - X_t)h_t)_{t\in[0,T]}$ is $\diff Y$-integrable on $(0,T]$ for each path in $\Omega'$.
\end{definition}

If $Y$ is an $\FF$-adapted c\`adl\`ag process of finite variation, then $(1,Y) \in L(\Omega')$ for any $\Omega' \subset D([0,T];\RR)$ (because any c\`adl\`ag function is bounded on compact intervals).

We fix a set $\Omega' \subset D([0,T];\RR)$ for the rest of this section.

\begin{definition}
\label{def:integral}
For $H=(h,Y) \in L(\Omega')$, we set
\begin{align}
\label{eqn:def:integral}
H \diamond X_T
&:= (X_T-X_0)h_0 + \int_{(0,T]} (X_T - X_t) h_t \dd Y_t\quad\text{on}\quad \Omega'.
\end{align}
\end{definition}

We note that the Lebesgue--Stieltjes integral on the right-hand side of \eqref{eqn:def:integral} is well defined and finite by the definition of $L(\Omega')$. The following result shows that for pathwise bounded $h$, $H\diamond X_T$ coincides with $\hat H_-\bullet X_T$ for $\hat H$ as in \eqref{eqn:integral:integrand}. This motivates the interpretation of $H\diamond X_T$ as the gains from trading in $X$ according to a self-financing  trading strategy $\hat H_-$.

\begin{proposition}
\label{prop:integral:consistency}
Let $H = (h,Y) \in L(\Omega')$ and $\omega \in \Omega'$. If the function $t\mapsto h_t(\omega)$ is bounded on $[0,T]$, then 
\begin{align*}
(H \diamond X_T)(\omega)
&= (\hat H_- \bullet X_T)(\omega),
\end{align*}
where $\hat H = h_0 + \int_{(0,\cdot]} h \dd Y$.
\end{proposition}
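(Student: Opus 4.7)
Fix $\omega \in \Omega'$ with $t \mapsto h_t(\omega)$ bounded on $[0,T]$, say by $M < \infty$. The first step is to observe that under this boundedness, the process $\hat H(\omega) = h_0(\omega) + \int_{(0,\cdot]} h_s(\omega) \dd Y_s(\omega)$ is well defined, c\`adl\`ag, and of finite variation along $\omega$: its total variation is controlled by $|h_0(\omega)| + M\cdot\mathrm{Var}_{[0,T]}(Y(\omega)) < \infty$ since $Y$ is of finite variation. Also, $\hat H$ inherits $\FF$-adaptedness from $h$ and $Y$. Therefore $\hat H_- \bullet X_T$ is defined in the pathwise sense of \eqref{eqn:integral:ibp} (equivalently \eqref{eqn:integral:ibp:alternative}).

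The second step is to apply the integration-by-parts representation \eqref{eqn:integral:ibp:alternative} to $\hat H$, which (evaluated at $\omega$) gives
\begin{align*}
(\hat H_- \bullet X_T)(\omega)
&= (X_T(\omega)-X_0(\omega))\hat H_0(\omega) + \int_{(0,T]} (X_T(\omega) - X_t(\omega))\dd \hat H_t(\omega).
\end{align*}
Since $\hat H_0(\omega) = h_0(\omega)$, the leading term already matches the leading term of $(H\diamond X_T)(\omega)$ from \eqref{eqn:def:integral}. For the integral term, the third step is to invoke associativity of the Lebesgue--Stieltjes integral: because $\hat H(\omega)$ is the indefinite Lebesgue--Stieltjes integral of $h(\omega)$ against $Y(\omega)$ on $(0,T]$, for any Borel function $g$ which is $\dd\hat H(\omega)$-integrable (equivalently, $g\cdot h(\omega)$ is $\dd Y(\omega)$-integrable on $(0,T]$) one has
\begin{align*}
\int_{(0,T]} g(t) \dd \hat H_t(\omega)
&= \int_{(0,T]} g(t) h_t(\omega) \dd Y_t(\omega).
\end{align*}
Applied to $g(t) = X_T(\omega) - X_t(\omega)$, whose $\dd Y(\omega)$-integrability of $g\cdot h(\omega)$ is exactly the defining property of $(h,Y)\in L(\Omega')$, this identifies the second term with $\int_{(0,T]}(X_T - X_t) h_t \dd Y_t$ evaluated at $\omega$. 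Combining the two terms yields $(\hat H_- \bullet X_T)(\omega) = (H\diamond X_T)(\omega)$.

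The only point requiring care is the justification of associativity: one verifies it first for a simple function $g$ (where it is immediate from additivity of the signed measure $\dd \hat H$), then extends to nonnegative Borel $g$ by monotone convergence applied separately to $\dd\hat H^+$ and $\dd\hat H^-$ via the Jordan decomposition of $Y(\omega)$, and finally to signed $g$ by decomposition into positive and negative parts. This is standard once the integrability hypothesis from $L(\Omega')$ is in place, so I do not anticipate any real obstacle; the statement is essentially a bookkeeping check that the ``definition by formal substitution'' in \eqref{eqn:def:integral} agrees with the pathwise integration-by-parts integral whenever both are individually meaningful.
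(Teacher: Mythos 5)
Your proposal is correct and follows essentially the same route as the paper: boundedness of $h(\omega)$ makes $\hat H(\omega)$ a well-defined c\`adl\`ag finite variation function, after which \eqref{eqn:integral:ibp:alternative} and associativity of the Lebesgue--Stieltjes integral (substituting $\diff\hat H = h\dd Y$) give the identity. The only difference is that you spell out the associativity step explicitly, which the paper leaves implicit.
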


If we set $h_0 = Y_0$ and $h_t = 1$ for $t \in (0,T]$ for an $\FF$-adapted c\`adl\`ag process $Y$ of finite variation, then $H=(h,Y)\in L(\Omega)$ and by Proposition~\ref{prop:integral:consistency},
\begin{align*}
H \diamond X_T
&= Y_- \bullet X_T\quad\text{on}\quad\Omega.
\end{align*}
So the integral $H\diamond X_T$ embeds all pathwise stochastic integrals $Y_-\bullet X_T$.

\begin{proof}[Proof of Proposition~\ref{prop:integral:consistency}]
Since $h(\omega)$ is bounded on $[0,T]$, $\hat H_t(\omega) = h_0(\omega) + \int_{(0,t]} h_s(\omega) \dd Y_s(\omega)$ is a well-defined c\`adl\`ag finite variation function on $[0,T]$. Thus, by \eqref{eqn:integral:ibp:alternative},
\begin{align*}
\hat H_- \bullet X_T
&= (X_T - X_0)h_0 + \int_{(0,T]} (X_T-X_s) h_s \dd Y_s
= H \diamond X_T.\qedhere
\end{align*}
\end{proof}

\section{Robust pricing and superhedging problems}
\label{sec:problems}

Throughout this section, we fix an irreducible pair $\mu \leq_c \nu$ with domain $(I,J)$ and a Borel function $f:\RR\to\ol\RR$ which is bounded from below by a $\nu$-integrable concave function.

\subsection{Pricing problem}
\label{sec:pricing problem}

Our pricing and hedging duality applies to a wide range of exotic derivatives including American options, fixed strike Asian options, Bermudan options, and European options with an intermediate maturity. We now describe this class of derivative securities. 

\begin{definition}
A nonnegative $\FF$-adapted nondecreasing c\`adl\`ag process $A=(A_t)_{t\in[0,T]}$ is called an \emph{averaging process} if $A_T(\omega)=1$ for every $\omega\in\Omega$. If in addition $A_0(\omega) = 0$ and $\Delta A_T(\omega) = 0$ for each $\omega \in \Omega$, then $A$ is called an \emph{interior averaging process}. If in addition there is $t\in(0,T)$ such that $A_t(\omega) = 0$ for each $\omega\in\Omega$, then $A$ is called a \emph{strictly interior averaging process}.
\end{definition}

Recall that we set $A_{0-} = 0$ and note that for each $\omega\in\Omega$, $A(\omega)$ can be identified with a Borel probability measure on $[0,T]$. If $A$ is an interior averaging process, then this probability measure is supported on $(0,T)$, and if $A$ is a strictly interior averaging process then its support is (uniformly in $\omega)$ contained in $[t,T)$ for some $t \in(0,T)$.

Given a nonempty set $\cA$ of averaging processes, we consider a derivative security whose payoff at time $T$ is
\begin{align}
\label{eqn:payoff}
f\Big(\int_{[0,T]}X_t \dd A_t\Big),
\end{align}
where $A \in \cA$ is chosen by the buyer and the seller observes $(A_s)_{s\in[0,t]}$ at time $t$. Then the \emph{robust model-based price} is defined as
\begin{align}
\label{eqn:pricing problem}
\bfS_{\mu,\nu}(f,\cA)
&=\sup_{P \in \cM(\mu,\nu)} \sup_{A \in \cA} \EX[P]{f\Big(\int_{[0,T]} X_t \dd A_t\Big)}.
\end{align}
In other words, $\bfS_{\mu,\nu}(f,\cA)$ is the highest model-based price of the derivative security \eqref{eqn:payoff} among all martingale models which are consistent with the given marginal distributions.

\begin{remark}
One can show that for each $P \in \cM(\mu,\nu)$ and each averaging process $A$, the law of $\int_{[0,T]} X_t \dd A_t$ under $P$ is in convex order between $\mu$ and $\nu$; cf.~Lemma~\ref{lem:average:convex order}. Because $f$ is by assumption bounded from below by a $\nu$-integrable concave function, the expectations in \eqref{eqn:pricing problem} are well defined.
\end{remark}

Important special cases are obtained for specific choices of $\cA$.

\begin{example}[No special exercise rights]
\label{ex:asian european}
Setting $\cA = \lbrace A \rbrace$ deprives the buyer of any special exercise rights and reduces \eqref{eqn:pricing problem} to the more familiar robust pricing problem
\begin{align*}
\sup_{P\in\cM(\mu,\nu)} \EX[P]{F}
\end{align*}
for the derivative security $F = f(\int_{[0,T]}X_t\dd A_t)$.

\begin{enumerate}
\item \emph{Asian options.} Setting $A_t = t/T$ recovers the Asian-style derivative $f(\frac{1}{T}\int_0^T X_t \dd t)$; this includes fixed strike Asian puts and calls, but not floating strike Asian options. This robust pricing problem is analyzed in \cite{CoxKallblad2017}.

\item \emph{European options.} Setting $A_t = \1_{[T',T]}(t)$ yields a European-style payoff $f(X_{T'})$ with an intermediate maturity $T' \in (0,T)$.
\end{enumerate}
\end{example}

\begin{example}[Special exercise rights]~
\label{ex:american bermudan}
Fix a nonempty set $\cT$ of $[0,T]$-valued $\FF$-stopping times, and consider $\cA = \lbrace \1_{\ldbrack \tau, T \rdbrack}:\tau \in \cT\rbrace$. Then \eqref{eqn:pricing problem} reduces to
\begin{align}
\label{eqn:ex:american bermudan}
\sup_{P\in\cM(\mu,\nu)} \sup_{\tau\in\cT}\EX[P]{f(X_\tau)}.
\end{align}
\begin{enumerate}
\item \emph{American options.} If $\cT$ consists of all $[0,T]$-valued $\FF$-stopping times, then \eqref{eqn:ex:american bermudan} is the robust American option pricing problem analyzed in \cite{BayraktarCoxStoev2018}.

\item \emph{Bermudan options.} Bermudan options with exercise dates $0\leq T_1 < \cdots < T_n \leq T$ are covered by choosing $\cT$ to be the set of $\lbrace T_1,\ldots, T_n \rbrace$-valued $\FF$-stopping times. 
\end{enumerate}
\end{example}

\subsection{Superhedging problem}
\label{sec:superhedging problem}

In the case of robust semi-static superhedging of American options, it is well known that a pricing-hedging duality can in general only hold if the seller of the option can adjust the dynamic part of his trading strategy after the option has been exercised; cf.~\cite[Section~3]{BayraktarHuangZhou2015}. In other words, the buyer has to communicate her decision of exercising to the seller at the time of exercising. The analog in our setting is that the seller observes $A_t$ at time $t$. That is, his trading strategy can be ``adapted'' to the averaging process chosen by the buyer.

To make this precise, let $\hat\Omega$ be the cartesian product of $\Omega$ and the set of nonnegative, nondecreasing, c\`adl\`ag functions $a:[0,T]\to[0,1]$ with $a(T) = 1$. As $\hat\Omega$ is a subspace of the Skorokhod space $D([0,T];\RR\times[0,1])$, we can equip it with the subspace Skorokhod topology and denote by $\hat\cF$ the corresponding Borel $\sigma$-algebra. We write $\hat\FF = (\hat\cF_t)_{t\in[0,T]}$ for the (raw) filtration generated by the canonical process on $\hat\Omega$. For any process $Z$ on $\hat\Omega$ and any averaging process $A$ (on $\Omega$), we define the process $Z^A$ on $\Omega$ by
\begin{align*}
Z^A_t(\omega)
&= Z_t(\omega,A(\omega)), \quad \omega\in\Omega.
\end{align*}
Note that if $Z$ is $\hat\FF$-adapted, then $Z^A$ is $\FF$-adapted, and if $Z$ is c\`adl\`ag or of finite variation, then so is $Z^A$.

Next, we define a suitable set of paths for the hedging problem. Let $\Omega_{\mu,\nu}\subset\Omega$ denote the subset of paths which start in $I$, evolve in $J$, and are ``captured'' if they approach the boundary $\partial J$:
\begin{align}
\label{eqn:relevant paths}
\Omega_{\mu,\nu}
:= \lbrace \omega \in \Omega : 
&\;\,\text{$\omega_0 \in I$, $\omega_t \in J$ for all $t \in (0,T]$,}\notag\\
&\;\,\text{if $\omega_{t-}\in \partial J$, then $\omega_u =\omega_{t-}$ for all $u\in[t,T]$, and}\\
&\;\,\text{if $\omega_t \in  \partial J$, then $\omega_u = \omega_t$ for all $u\in[t,T]$} \rbrace.\notag
\end{align}

One can show that every martingale coupling between $\mu$ and $\nu$ is concentrated on $\Omega_{\mu,\nu}$:\footnote{The fact that $\mu$ and $\nu$ are concentrated on $I$ and $J$, respectively, together with the martingale property implies that $P$-a.e.~path has the first two properties in \eqref{eqn:relevant paths}. The other two properties can be shown similarly to the fact that nonnegative supermartingales are almost surely captured in zero (cf.~\cite[Lemma~7.31]{Kallenberg2002}).}

\begin{lemma}
\label{lem:relevant paths}
$\Omega_{\mu,\nu} \in \cF$ and $P[\Omega_{\mu,\nu}] = P[\Omega]$ for every $P\in\cM(\mu,\nu)$.
\end{lemma}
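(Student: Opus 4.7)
Writing $I=(a,b)$, condition ``$\omega_0 \in I$'' is the measurable event that $X_0$ lands in the Borel set $I$. Condition ``$\omega_t \in J$ for all $t \in (0,T]$'' reduces, by c\`adl\`ag regularity, to $\sup_{t \in [0,T]} \omega_t \leq b$ and $\inf_{t \in [0,T]} \omega_t \geq a$ supplemented by strict inequalities at any endpoint of $I$ not lying in $J$; the sup/inf can be realised as countable suprema over rationals and are $\cF$-measurable. For the two absorption conditions I use the $\FF$-stopping times $\tau_b := \inf\{t \in [0,T] : X_t = b \text{ or } X_{t-} = b\}$ and $\tau_a$ analogously, and note that ``$X$ is constant on $[\tau_\bullet, T]$'' is itself a Borel event.

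\textbf{Probability.} Fix $P \in \cM(\mu,\nu)$ and assume $-\infty < a < b < \infty$ (the unbounded case is analogous or vacuous). Irreducibility forces $\nu(J) = \nu(\RR)$: outside $\bar I$ one has $u_\mu = u_\nu$, but $u_\mu$ is affine there since $\mu$ is concentrated on $\bar I$, so $u_\nu$ is affine on $(-\infty,a)\cup(b,\infty)$, hence $\nu$ has no mass there, while any endpoint of $I$ not in $J$ carries no $\nu$-mass by construction. Condition (i) is then immediate from $\mu(I)=\mu(\RR)$. For condition (ii), for each $\epsilon>0$ consider $\sigma_\epsilon := \inf\{t \leq T : X_t \geq b+\epsilon\}$: on $\{\sigma_\epsilon \leq T\}$, right-continuity gives $X_{\sigma_\epsilon} \geq b+\epsilon$, and combined with $X_T \leq b$ $P$-a.s., optional sampling yields $(b+\epsilon)P[\sigma_\epsilon \leq T] \leq E[X_{\sigma_\epsilon}\1_{\{\sigma_\epsilon \leq T\}}] = E[X_T\1_{\{\sigma_\epsilon \leq T\}}] \leq bP[\sigma_\epsilon \leq T]$, forcing $P[\sigma_\epsilon \leq T]=0$; sending $\epsilon\downarrow 0$ and arguing symmetrically for $a$ gives $X_t \in \bar I$ for all $t$ $P$-a.s.

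\textbf{Absorption and conclusion.} For (iii) and (iv), I invoke the capture theorem for nonnegative c\`adl\`ag supermartingales (Kallenberg, Lemma~7.31) applied to the nonnegative $P$-martingale $Y^b_t := b - X_t$: with $\tau_b := \inf\{t \leq T : Y^b_t \wedge Y^b_{t-} = 0\}$, one obtains $Y^b \equiv 0$ on $[\tau_b,T]$ $P$-a.s., i.e., $X$ is absorbed at $b$ from the first time $X_t$ or $X_{t-}$ equals $b$; symmetrically at $a$. If $b \in J$, this is exactly the required absorption at $b$. If $b \notin J$, then $\nu(\{b\})=0$ gives $P[X_T=b]=0$, which combined with absorption forces $P[\tau_b \leq T]=0$, so the path strictly avoids $b$, upgrading (ii) and making absorption at $b$ vacuous; symmetrically at $a$. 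Intersecting all these full-measure events yields $P[\Omega_{\mu,\nu}] = P[\Omega]$. The main technical obstacle is (iii)--(iv): the scenario ``$X_{t-} \in \partial J$ but $X$ jumps away'' cannot be ruled out by optional sampling alone and genuinely requires the supermartingale capture theorem, while the case split between atoms and non-atoms of $\nu$ at $\partial I$ is what reconciles (ii)--(iv) with the marginal law.
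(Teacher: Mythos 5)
Your argument is correct and follows essentially the same route the paper indicates in its footnote: the first two path properties come from the concentration of $\mu$ on $I$ and $\nu$ on $J$ (using irreducibility) together with the martingale property, and the absorption properties come from the capture theorem for nonnegative supermartingales (Kallenberg, Lemma~7.31) applied to $b-X$ and $X-a$, with the atom/no-atom case split at the endpoints of $I$. The only small wrinkle is that $\sigma_\epsilon=\inf\{t:X_t\geq b+\epsilon\}$ need not be a stopping time for the raw filtration $\FF$; this is harmless, e.g.\ pass to $\FF_+$ (under which the c\`adl\`ag martingale property persists) or simply note $E^P[(X_q-b)^+]\leq\nu((\cdot-b)^+)=0$ for all rational $q$ and use right-continuity.
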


We are now ready to define the trading strategies for the robust superhedging problem. 

\begin{definition}
\label{def:strategy}
A \emph{semi-static trading strategy} is a triplet $(\varphi,\psi,H)$ consisting of a pair of functions $(\varphi,\psi) \in L^c(\mu,\nu)$ and a pair $H=(h_t,Y_t)_{t\in[0,T]}$ of $\hat\FF$-adapted processes on $\hat\Omega$ such that
\begin{align}
\label{eqn:def:strategy:pathwise integrability}
H^A
&:= (h^A,Y^A) \in L(\Omega_{\mu,\nu})
\quad\text{for every averaging process }A.
\end{align}
\end{definition}

The portfolio value at time $T$ of a semi-static trading strategy is given by the sum of the static part with payoffs $\varphi(X_0)$ and $\psi(X_T)$ and the gains $H^A\diamond X_T$ of the dynamic part:
\begin{align}
\label{eqn:portfolio value:terminal}
\varphi(X_0) + \psi(X_T) + H^A\diamond X_T.
\end{align}
The initial cost to set up this position is equal to the initial price of the static part:
\begin{align}
\label{eqn:portfolio value:initial}
\mu(\varphi) + \nu(\psi).
\end{align}

We now turn our attention to semi-static trading strategies which dominate the payoff \eqref{eqn:payoff} of our derivative security for  each path in $\Omega_{\mu,\nu}$ and every averaging process in $\cA$.

\begin{definition}
\label{def:superhedge}
A semi-static trading strategy $(\varphi,\psi,H)$ is called a \emph{semi-static superhedge (for $f$ and $\cA$)} if for every $A \in \cA$,
\begin{align}
\label{eqn:def:superhedge}
f\Big(\int_{[0,T]} X_t \dd A_t\Big)
&\leq \varphi(X_0) + \psi(X_T) + H^A \diamond X_T\quad\text{on}\quad\Omega_{\mu,\nu}
\end{align}
and
\begin{align}
\label{eqn:def:superhedge:admissibility condition}
\EX[P]{\varphi(X_0) + \psi(X_T) + H^A \diamond X_T}
&\leq \mu(\varphi) + \nu(\psi), \quad P \in \cM(\mu,\nu).
\end{align}
The set of semi-static superhedges for $f$ and $\cA$ is denoted by $\cD_{\mu,\nu}(f,\cA)$.
\end{definition}

The requirement \eqref{eqn:def:superhedge:admissibility condition} is an admissibility condition. It demands that for every $P\in\cM(\mu,\nu)$, the portfolio value, consisting of both the static and the dynamic part, is a one-step $P$-supermartingale between the time at which the static part is set up and time $T$. In other words, the expectation of the terminal portfolio value \eqref{eqn:portfolio value:terminal} is less than or equal to the initial portfolio value \eqref{eqn:portfolio value:initial}.

We define the \emph{robust superhedging price (for $f$ and $\cA$)} as the ``minimal'' initial capital required to set up a semi-static superhedge for $f$ and $\cA$:\footnote{We use the convention $\inf\emptyset=\infty$.}
\begin{equation}
\label{eqn:hedging problem}
\bfI_{\mu,\nu}(f,\cA)
= \inf_{(\varphi,\psi,H)\in\cD_{\mu,\nu}(f,\cA)} \lbrace \mu(\varphi) + \nu(\psi) \rbrace.
\end{equation}

\subsection{Weak and strong duality}
\label{sec:duality}

Weak duality between the robust pricing and hedging problems is an immediate consequence of their definitions:

\begin{lemma}[Weak duality]
\label{lem:weak duality}
Let $f:\RR\to\ol\RR$ be Borel and bounded from below by a $\nu$-integrable concave function and let $\cA$ be a nonempty set of averaging processes. Then
\begin{equation*}
\bfS_{\mu,\nu}(f,\cA)
\leq \bfI_{\mu,\nu}(f,\cA).
\end{equation*}
\end{lemma}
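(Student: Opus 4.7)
The plan is entirely straightforward: combine the pathwise superhedging inequality with the admissibility condition, and then take a supremum/infimum.

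First I would fix an arbitrary $P\in\cM(\mu,\nu)$, an arbitrary $A\in\cA$, and an arbitrary semi-static superhedge $(\varphi,\psi,H)\in\cD_{\mu,\nu}(f,\cA)$; if $\cD_{\mu,\nu}(f,\cA)=\emptyset$ the right-hand side is $+\infty$ and there is nothing to prove. By Lemma~\ref{lem:relevant paths}, $P$ is concentrated on $\Omega_{\mu,\nu}$, so the pathwise superhedging inequality \eqref{eqn:def:superhedge} yields
\begin{align*}
f\Big(\int_{[0,T]} X_t \dd A_t\Big)
&\leq \varphi(X_0) + \psi(X_T) + H^A \diamond X_T \quad P\text{-a.s.}
\end{align*}
The left-hand side is bounded below by a $\nu$-integrable concave function composed with an integral whose law under $P$ is in convex order between $\mu$ and $\nu$ (cf.\ the remark after \eqref{eqn:pricing problem}), so its $P$-expectation is well defined in $(-\infty,\infty]$; the right-hand side likewise has a well-defined $P$-expectation in view of \eqref{eqn:def:superhedge:admissibility condition}. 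Taking $P$-expectations on both sides therefore gives
\begin{align*}
\EX[P]{f\Big(\int_{[0,T]} X_t \dd A_t\Big)}
&\leq \EX[P]{\varphi(X_0) + \psi(X_T) + H^A \diamond X_T}
\leq \mu(\varphi) + \nu(\psi),
\end{align*}
where the second inequality is precisely the admissibility condition \eqref{eqn:def:superhedge:admissibility condition}.

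Finally I would take the supremum over $P\in\cM(\mu,\nu)$ and $A\in\cA$ on the left (the bound does not depend on either) and then the infimum over $(\varphi,\psi,H)\in\cD_{\mu,\nu}(f,\cA)$ on the right to conclude $\bfS_{\mu,\nu}(f,\cA)\leq\bfI_{\mu,\nu}(f,\cA)$.

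The only nontrivial bookkeeping point, and perhaps the sole potential pitfall, is to make sure that both expectations above are well defined in $(-\infty,\infty]$ even when $\mu(\varphi)=-\infty$ or $\nu(\psi)=+\infty$ individually; this is precisely what the generalized integral of Section~\ref{sec:BNT integral} together with the lower-boundedness assumption on $f$ and the admissibility condition \eqref{eqn:def:superhedge:admissibility condition} are set up to guarantee. Everything else is a one-line chain of inequalities.
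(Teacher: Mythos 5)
Your proposal is correct and follows essentially the same argument as the paper: take $P$-expectations in the pathwise superhedging inequality \eqref{eqn:def:superhedge} (which holds $P$-a.s.\ since $P$ is concentrated on $\Omega_{\mu,\nu}$ by Lemma~\ref{lem:relevant paths}), apply the admissibility condition \eqref{eqn:def:superhedge:admissibility condition}, and pass to the supremum and infimum. Your extra remarks on the well-definedness of the expectations are a fair elaboration of points the paper leaves implicit, but they do not change the route.
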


\begin{proof}
Let $P \in \cM(\mu,\nu)$, $A\in\cA$, and $(\varphi,\psi,H) \in \cD_{\mu,\nu}(f,\cA)$ (there is nothing to show if this set is empty). Taking $P$-expectations in \eqref{eqn:def:superhedge} and using \eqref{eqn:def:superhedge:admissibility condition} shows that $\EX[P]{f(\int_{[0,T]} X_t \dd A_t)} \leq \mu(\varphi) + \nu(\psi)$.

This proves the claim as $P$, $A$, and $(\varphi,\psi,H)$ were arbitrary.
\end{proof}

With an additional mild assumption on either $\cA$ or $f$, we obtain strong duality and the existence of dual minimizers:

\begin{theorem}
\label{thm:StrongDuality}
Let $\mu \leq_c \nu$ be irreducible, let $f:\RR\to[0,\infty]$ be Borel, and let $\cA$ be a set of averaging processes. Suppose that one of the following two conditions holds:
\begin{itemize}
\item $f$ is lower semicontinuous and $\cA$ contains an interior averaging process;
\item $\cA$ contains a strictly interior averaging process.
\end{itemize}
Then
\begin{align*}
\bfS_{\mu,\nu}(f,\cA)
&= \bfI_{\mu,\nu}(f,\cA) \in [0,\infty]
\end{align*}
and this value is independent of $\cA$ as long as one of the two conditions above holds. Moreover, if $\bfI_{\mu,\nu}(f,\cA) < \infty$, then there exists an optimizer $(\varphi,\psi,H)\in\cD_{\mu,\nu}(f,\cA)$ for $\bfI_{\mu,\nu}(f,\cA)$.
\end{theorem}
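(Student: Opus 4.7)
The plan is to sandwich the primal and dual problems between the two auxiliary problems
\begin{align*}
\wt\bfS &:= \sup_{\mu \leq_c \theta \leq_c \nu} \theta(f), \\
\wt\bfI &:= \inf\lbrace \mu(\varphi) + \nu(\psi) : \varphi \text{ concave}, \psi \text{ convex}, \varphi + \psi \geq f \rbrace,
\end{align*}
and to prove the chain of inequalities $\wt\bfS \leq \bfS_{\mu,\nu}(f,\cA) \leq \bfI_{\mu,\nu}(f,\cA) \leq \wt\bfI$, together with the auxiliary strong duality $\wt\bfS = \wt\bfI$ and attainment for $\wt\bfI$. The middle inequality is weak duality (Lemma \ref{lem:weak duality}), and the outer ones do not depend on $\cA$, so strong duality, dual attainment, and independence from $\cA$ all drop out at once.

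For the primal lower bound $\wt\bfS \leq \bfS_{\mu,\nu}(f,\cA)$, fix $\theta$ with $\mu \leq_c \theta \leq_c \nu$ and an averaging process $A \in \cA$ of the type guaranteed by the hypothesis. I would build a sequence $(P_n) \subset \cM(\mu,\nu)$ under which $X$ travels from $\mu$ to (a law approximating) $\theta$ on the ``support interval'' of $A$ and then from $\theta$ to $\nu$ on the remainder, so that the law of $\int_{[0,T]} X_t \dd A_t$ under $P_n$ converges weakly to $\theta$. If $A$ is strictly interior, i.e.\ supported in $[t_0,T)$ for some $t_0 > 0$, then this integral can be realized exactly as the $P_n$-law of some martingale average with marginal $\theta$, and monotone/Fatou yields $\theta(f) \leq \liminf_n \EX[P_n]{f(\int X\dd A)} \leq \bfS$. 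If $A$ is only interior and $f$ is lower semicontinuous, weak convergence combined with the Portmanteau theorem gives the same bound. Taking the supremum over $\theta$ yields $\wt\bfS \leq \bfS$.

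For the dual upper bound $\bfI_{\mu,\nu}(f,\cA) \leq \wt\bfI$, fix an admissible pair $(\varphi,\psi)$ for $\wt\bfI$. I would define the dynamic part on $\hat\Omega$ by $H = (h,Y)$ with $Y_t(\omega,a) = a_t$, $h_0(\omega,a) = \varphi'(\omega_0)$, and $h_t(\omega,a) = -(\varphi'(\omega_0) + \psi'(\omega_t))$ for $t > 0$, so that $H^A = (h^A,A)$ is manifestly $\FF$-adapted. By the convexity of $\psi$ and Jensen's inequality applied to the probability measure $\dd A_t$ on $[0,T]$ for each $\omega \in \Omega_{\mu,\nu}$,
\begin{align*}
f\Big(\int_{[0,T]} X_t \dd A_t\Big) \leq \varphi(X_0) + \psi\Big(\int_{[0,T]} X_t \dd A_t\Big) \leq \varphi(X_0) + \int_{[0,T]} \psi(X_t) \dd A_t,
\end{align*}
and the subgradient inequality $\psi(X_t) \leq \psi(X_T) - \psi'(X_t)(X_T - X_t)$ followed by integration against $\dd A_t$ produces exactly \eqref{eqn:def:superhedge} with the integral $H^A \diamond X_T$ of Section \ref{sec:stochastic integration}. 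The hard point is to check that $H^A \in L(\Omega_{\mu,\nu})$ in spite of the possible blow-up of $\psi'$ at the endpoints of $J$: this is where the ``capturing'' property in \eqref{eqn:relevant paths} is essential, since once $X$ hits $\partial J$ the factor $X_T - X_t$ vanishes along the remaining path, controlling the singularity of $\psi'(X_t)$. Admissibility \eqref{eqn:def:superhedge:admissibility condition} will follow from a standard localization argument showing that $H^A \diamond X$ is a $P$-supermartingale for every $P \in \cM(\mu,\nu)$.

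It remains to prove the auxiliary strong duality $\wt\bfI \leq \wt\bfS$ with attainment, which is where I expect the main technical obstacle. The approach follows BNT 2017: one first reduces to the case where the concave lower bound on $f$ is zero, then uses a Hahn–Banach/minimax argument on the cone of pairs $(\varphi,\psi)$ with $\varphi + \psi \geq f$ and the set of intermediate laws $\theta$; the generalized integral of Section \ref{sec:BNT integral} and its calculus in Lemma \ref{lem:ConvexIntegrationRules} are what make the pairing $(\theta,(\varphi,\psi)) \mapsto \theta(\varphi+\psi)$ — or rather $\mu(\varphi) + \nu(\psi) \geq \theta(f)$ — well defined even when the individual integrals are infinite. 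Compactness/tightness of $\lbrace \theta : \mu \leq_c \theta \leq_c \nu \rbrace$ together with lower semicontinuity (for lsc $f$ this is immediate; for the general Borel $f$ covered by the second hypothesis one approximates by truncation and passes to the limit) yields a primal maximizer $\theta^\ast$, and the classical concave envelope/duality on each of the three convex-order pairs $\mu \leq_c \theta^\ast \leq_c \nu$ produces a concave $\varphi^\ast$ and convex $\psi^\ast$ attaining $\wt\bfI$. The verification that $(\varphi^\ast,\psi^\ast) \in L^c(\mu,\nu)$ and that the lifting from the second paragraph yields a superhedge in $\cD_{\mu,\nu}(f,\cA)$ with initial cost $\mu(\varphi^\ast) + \nu(\psi^\ast) = \wt\bfI = \bfS_{\mu,\nu}(f,\cA)$ then completes the proof of dual attainment; independence of the common value from $\cA$ is automatic because neither $\wt\bfS$ nor $\wt\bfI$ depends on $\cA$.
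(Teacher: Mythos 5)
Your overall architecture is the paper's: sandwich $\wt\bfS_{\mu,\nu}(f)\leq\bfS_{\mu,\nu}(f,\cA)\leq\bfI_{\mu,\nu}(f,\cA)\leq\wt\bfI_{\mu,\nu}(f)$, prove $\wt\bfS_{\mu,\nu}(f)=\wt\bfI_{\mu,\nu}(f)$ with dual attainment, and lift an auxiliary dual element $(\varphi,\psi)$ to a semi-static superhedge via an explicit dynamic part. The primal lower bound sketch matches Lemma~\ref{lem:average:weak approximation} and Proposition~\ref{prop:reduction:primal}. However, two steps in your proposal have genuine gaps.

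First, your route to auxiliary strong duality and dual attainment would fail. You propose to obtain a primal maximizer $\theta^\ast$ from weak compactness of $\{\theta:\mu\leq_c\theta\leq_c\nu\}$ ``together with lower semicontinuity'' and then build $(\varphi^\ast,\psi^\ast)$ from it. For a maximizer you need \emph{upper} semicontinuity of $\theta\mapsto\theta(f)$, which lsc $f$ does not give; indeed Example~\ref{ex:no primal attainment} exhibits an lsc $f$ for which no primal optimizer exists, while dual attainment still must hold. The paper avoids this entirely: dual attainment comes from a closedness property of the dual feasible set (Proposition~\ref{prop:DualClosedness}, a Koml\'os-type argument as in \cite[Proposition~5.2]{BeiglbockNutzTouzi2017}) applied to a minimizing \emph{dual} sequence, strong duality is first proved for bounded upper semicontinuous $f$ by Hahn--Banach in a weighted space (Lemma~\ref{lem:ContinuousDuality}), and the passage to general Borel (upper semianalytic) $f$ uses a capacitability argument; your ``approximate by truncation'' does not bridge from usc to Borel payoffs.

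Second, in the dual upper bound your displayed pathwise chain is incorrect: $f\bigl(\int X\dd A\bigr)\leq\varphi(X_0)+\psi\bigl(\int X\dd A\bigr)$ does not follow from $\varphi+\psi\geq f$, since $\varphi\bigl(\int X\dd A\bigr)\leq\varphi(X_0)$ need not hold; the correct argument uses the superdifferential inequality $\varphi\bigl(\int X\dd A\bigr)\leq\varphi(X_0)+\varphi'(X_0)\bigl(\int X\dd A-X_0\bigr)$, and it is exactly this correction that the $\varphi'$-terms in the dynamic part must deliver (cf.\ \eqref{eqn:prop:dynamic part:pf:calculation}). Relatedly, your choice $h_0=\varphi'(X_0)$ ignores a possible atom of $A$ at time $0$: since $H\diamond X_T$ integrates only over $(0,T]$, an averaging process in $\cA$ with $A_0>0$ contributes a term of arbitrary sign that your strategy does not hedge, which is why Proposition~\ref{prop:dynamic part} takes $h_0=\varphi'(X_0)(1-A_0)-\psi'(X_0)A_0$ (the superhedge must hold for \emph{every} $A\in\cA$, not just the interior one). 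Finally, the admissibility condition \eqref{eqn:def:superhedge:admissibility condition} is not a ``standard localization'': because $\mu(\varphi)+\nu(\psi)$ is the generalized integral of Definition~\ref{def:BNT integral} (with possibly non-integrable $\varphi$, $\psi$), the paper needs the time change by the right-inverse of $A$ together with Lemma~\ref{lem:expectation}, and the nonnegativity of $f$ to get the required uniform lower bound on the integrand.
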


\begin{remark}\quad
\label{rem:strong duality:derivatives}
\begin{enumerate}
\item For fixed $f$, the robust model-based price $\bfS_{\mu,\nu}(f,\cA)$ is invariant under the choice of the set $\cA$ (as long as the assumptions of Theorem~\ref{thm:StrongDuality} hold). In particular, American, Bermudan, and European options with intermediate maturity (cf.~Examples~\ref{ex:asian european}--\ref{ex:american bermudan}) all have the same robust model-based price (because the corresponding sets $\cA$ all contain a strictly interior averaging process). If $f$ is lower semicontinuous, this extends to the Asian-style option of Example~\ref{ex:asian european}~(i). If more than two marginals are given, then the robust model-based prices of these derivatives typically differ; see Example~\ref{ex:multi marginal}.

\item Derivatives of the form \eqref{eqn:payoff} that depend distinctly on $X_0$ and/or $X_T$ such as $f(\frac{1}{2}(X_0 + X_T))$ are not covered by Theorem~\ref{thm:StrongDuality} ($\cA$ does not contain an \emph{interior} averaging process). In these cases, the robust model-based price is still bounded above by the corresponding robust model-based price of, say, the European-style derivative $f(X_{T/2})$. However, the inequality is typically strict; see Example~\ref{ex:dual auxiliary fail}.
\end{enumerate}
\end{remark}

\begin{remark}\quad
\label{rem:strong duality:technical}
\begin{enumerate}
\item Theorem~\ref{thm:StrongDuality} can be extended to non-irreducible marginals along the lines of \cite[Section~7]{BeiglbockNutzTouzi2017}.

\item Strong duality continues to hold if we restrict ourselves to finite variation strategies; cf.~Remark~\ref{rem:strong duality with FV strategies} for an outline of the argument. It is an open question whether there is (in general) a dual minimizer $(\varphi,\psi,H)$ with a dynamic part $H$ of finite variation.
\end{enumerate}
\end{remark}

We defer the proof of Theorem~\ref{thm:StrongDuality} to the end of Section~\ref{sec:auxiliary duality}. The idea is as follows. We bound the pricing problem from below and the hedging problem from above by auxiliary maximization and minimization problems, respectively, and show that strong duality holds between those two auxiliary problems. Then all four problems have equal value and in particular strong duality for the pricing and hedging problems holds. Moreover, we show that the auxiliary dual problem admits a minimizer and that every element in the dual space of the auxiliary problem gives rise to a semi-static superhedge with the same cost. Then, in particular, the minimizer of the auxiliary dual problem yields an optimal semi-static superhedge for $f$ and $\cA$ (which is independent of $\cA$).

\section{Auxiliary problems}
\label{sec:auxiliary problems}

Throughout this section, we fix an irreducible pair $\mu \leq_c \nu$ with domain $(I,J)$ and a function $f:\RR\to\ol\RR$ which is bounded from below by a $\nu$-integrable concave function.

The auxiliary primal and dual problems are formally derived in Section~\ref{sec:motivation}. They are rigorously introduced in Sections~\ref{sec:primal problem}--\ref{sec:dual problem} and proved to be lower and upper bounds of the robust model-based price and the robust superhedging price, respectively. Their strong duality is proved in Section~\ref{sec:auxiliary duality}. Finally, structural properties of primal and dual optimizers of the auxiliary problems are studied in Section~\ref{sec:structure}.

\subsection{Motivation}
\label{sec:motivation}

The key property of payoffs of the form \eqref{eqn:payoff} is that the law of $\int_{[0,T]}X_t\dd A_t$ under $P\in\cM(\mu,\nu)$ is in convex order between $\mu$ and $\nu$. In this section, we explain this observation and how it can be used to estimate the robust pricing problem from below and the robust superhedging problem from above.

Let $P \in \cM(\mu,\nu)$ and let $\tau$ be a $[0,T]$-valued $\FF$-stopping time. An application of the optional stopping theorem and Jensen's inequality shows that for any convex function $\psi$,
\begin{equation*}
\mu(\psi)
= \EX[P]{\psi(X_0)}
= \EX[P]{\psi(\cEX[P]{X_\tau}{\cF_0})}
\leq \EX[P]{\psi(X_\tau)}\qquad \text{and}
\end{equation*}
\begin{equation*}
\nu(\psi)
= \EX[P]{\psi(X_T)}
\geq \EX[P]{\psi(\cEX[P]{X_T}{\cF_\tau})}
= \EX[P]{\psi(X_\tau)},
\end{equation*}
so that the law of $X_\tau$ under $P$ is in convex order between $\mu$ and $\nu$.

Using a time change argument and again Jensen's inequality and the optional stopping theorem, it can be shown that this property generalizes to the random variable $\int_{[0,T]}X_t \dd A_t$ for an averaging process $A$.

\begin{lemma}
\label{lem:average:convex order}
Let $P\in\cM(\mu,\nu)$ and let $A$ be an averaging process. Then the law of $\int_{[0,T]}X_t\dd A_t$ under $P$ is in convex order between $\mu$ and $\nu$.
\end{lemma}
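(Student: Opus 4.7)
The plan is to represent $\int_{[0,T]} X_t \dd A_t$ as a time average of stopped values of $X$ via a change of variables, and then combine the optional stopping theorem with two applications of Jensen's inequality. For $u \in [0,1)$ set $\tau_u := \inf\lbrace t \in [0,T] : A_t > u \rbrace$ and $\tau_1 := T$. Since $A$ is $\FF$-adapted, c\`adl\`ag and nondecreasing, each $\tau_u$ is a $[0,T]$-valued $\FF$-stopping time, and $u\mapsto\tau_u(\omega)$ is nondecreasing, hence Borel. A direct computation shows that the pushforward of Lebesgue measure on $[0,1]$ under $u\mapsto\tau_u(\omega)$ is the probability measure $\dd A(\omega)$ on $[0,T]$, and the change-of-variables formula yields the pathwise identity
\begin{equation*}
\int_{[0,T]} X_t(\omega) \dd A_t(\omega) = \int_0^1 X_{\tau_u(\omega)}(\omega) \dd u.
\end{equation*}

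Under $P\in\cM(\mu,\nu)$, $X$ is a martingale closed by $X_T\in L^1(P)$ and hence uniformly integrable. Optional stopping yields $\cEX[P]{X_{\tau_u}}{\cF_0} = X_0$ for each $u$, and Fubini (justified by $\int_0^1 \EX[P]{\vert X_{\tau_u}\vert}\dd u \leq \EX[P]{\vert X_T\vert}<\infty$) implies
\begin{equation*}
\cEX[P]{\int_{[0,T]} X_t \dd A_t}{\cF_0} = X_0 \quad P\text{-a.s.}
\end{equation*}

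Fix a convex $\psi:\RR\to\RR$. For the lower bound, conditional Jensen applied to the preceding identity gives
\begin{equation*}
\mu(\psi) = \EX[P]{\psi(X_0)} \leq \EX[P]{\psi\Big(\int_{[0,T]} X_t \dd A_t\Big)}.
\end{equation*}
For the upper bound, we may assume $\nu(\psi) < \infty$, so that $\psi(X)$ is a $P$-submartingale closed by $\psi(X_T)\in L^1(P)$ (any convex $\psi:\RR\to\RR$ is bounded below by an affine function, so the negative part is controlled). Applying Jensen pathwise to the uniform probability on $[0,1]$ and then Fubini,
\begin{equation*}
\EX[P]{\psi\Big(\int_{[0,T]} X_t\dd A_t\Big)} \leq \int_0^1 \EX[P]{\psi(X_{\tau_u})} \dd u \leq \nu(\psi),
\end{equation*}
where the final inequality is the optional stopping theorem applied to the uniformly integrable submartingale $\psi(X)$.

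The main technical point is the pathwise change-of-variables identity: although $X$ is merely c\`adl\`ag and $A$ may charge the endpoints $0$ or $T$, the identification of $\dd A(\omega)$ with the pushforward of Lebesgue measure under $u\mapsto\tau_u(\omega)$ is insensitive to such subtleties and, if necessary, can be verified by first treating step averaging processes of the form $\sum_i c_i \1_{\ldbrack \sigma_i, T\rdbrack}$ (for which the identity reduces to the stopping-time case $\int X\dd A = \sum_i c_i X_{\sigma_i}$) and then approximating a general averaging process pointwise, using the uniform boundedness of $X(\omega)$ on $[0,T]$ to pass to the limit.
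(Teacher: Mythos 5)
Your proof is correct and follows essentially the same route the paper intends: the paper only sketches the argument for this lemma (``a time change argument, Jensen's inequality, and the optional stopping theorem''), and the time change you use, $\tau_u=\inf\{t: A_t>u\}$, is exactly the family $C_s$ employed in the proof of Proposition~\ref{prop:dynamic part}, with the same two applications of Jensen plus optional sampling for the closed (sub)martingale. The only caveat, shared with the paper, is the routine technicality that $\inf\{t:A_t>u\}$ is in general a stopping time only for the right-continuous enlargement of the raw filtration (or one replaces $>$ by $\geq$), which does not affect the optional sampling or the change-of-variables identity.
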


In the sequel, we write $\bfS = \bfS_{\mu,\nu}(f,\cA)$ and $\bfI = \bfI_{\mu,\nu}(f,\cA)$ for brevity. Lemma~\ref{lem:average:convex order} implies that
\begin{equation*}
\bfS
\leq \sup_{\mu\leq_c\theta\leq_c\nu}\theta(f)
=: \wt\bfS.
\end{equation*}
We show in Section~\ref{sec:primal problem} that also the converse inequality holds under mild assumptions on $f$ and $\cA$. Thus, $\bfS=\wt\bfS$ and one is led to expect that $\bfI = \wt\bfI$ for a suitable dual problem $\wt\bfI$ to $\wt\bfS$.

Let us thus formally derive the Lagrange dual problem for $\wt\bfS$. Dualizing the constraint $\mu\leq_c\theta\leq_c\nu$ suggests to consider the Lagrangian
\begin{align}
\label{eqn:motivation:lagrangian}
L(\theta,\psi_1,\psi_2)
&:= \theta(f) + (\theta(\psi_1)-\mu(\psi_1)) + (\nu(\psi_2)-\theta(\psi_2)), 
\end{align}
where \emph{convex} functions $\psi_1,\psi_2$ are taken as Lagrange multipliers.\footnote{Note that the last two terms in \eqref{eqn:motivation:lagrangian} are nonnegative for all convex $\psi_1,\psi_2$ if and only if the primal constraint $\mu\leq_c\theta\leq_c\nu$ holds.} Then the Lagrange dual problem is
\begin{equation*}
\wt\bfI
=\inf_{\psi_1,\psi_2} \sup_{\theta} L(\theta,\psi_1,\psi_2)
= \inf_{\psi_1,\psi_2} \sup_{\theta} \left\lbrace \theta(f+\psi_1-\psi_2) -\mu(\psi_1) + \nu(\psi_2)\right\rbrace
\end{equation*}
where the infima are taken over convex functions and the suprema are taken over finite measures. Viewing the finite measure $\theta$ as a Lagrange multiplier for the constraint $f \leq -\psi_1 + \psi_2$ and relabeling $\varphi = -\psi_1$ and $\psi = \psi_2$, we obtain
\begin{equation}
\label{eqn:motivation:Lagrange dual}
\wt\bfI
= \inf\lbrace \mu(\varphi)+\nu(\psi) : \text{$\varphi$ concave, $\psi$ convex, and $\varphi+\psi \geq f$} \rbrace.
\end{equation}
In the precise definition of $\wt\bfI$ in Section~\ref{sec:dual problem}, $\mu(\varphi)+\nu(\psi)$ is understood in the generalized sense of Definition~\ref{def:BNT integral} and the inequality $\varphi + \psi \geq f$ is required to hold on $J$. We then show that each feasible element $(\varphi,\psi)$ for $\wt\bfI$ entails an element $(\varphi,\psi,H) \in \cD_{\mu,\nu}(f,\cA)$ (Proposition~\ref{prop:dynamic part}), which implies that $\bfI\leq\wt\bfI$.

Combining the above with the weak duality inequality (Lemma~\ref{lem:weak duality}) yields
\begin{align*}
\wt\bfS
&= \bfS
\leq \bfI
\leq \wt\bfI.
\end{align*}
Hence, strong duality and dual attainment for the robust pricing and superhedging problems reduce to the same assertions for the simpler auxiliary problems, which are proved in Section~\ref{sec:auxiliary duality}.

\subsection{Auxiliary primal problem}
\label{sec:primal problem}

Consider the \emph{auxiliary primal problem}
\begin{equation}
\label{eqn:auxiliary primal problem}
\wt\bfS_{\mu,\nu}(f)
= \sup_{\mu\leq_c \theta \leq_c \nu} \theta(f),
\end{equation}
where $\theta(f)$ is understood as the outer integral if $f$ is not Borel-measurable. Under suitable conditions on $f$ and $\cA$, the primal value $\wt\bfS_{\mu,\nu}(f)$ is a lower bound for the robust model-based price \eqref{eqn:pricing problem}:

\begin{proposition}
\label{prop:reduction:primal}
Let $\cA$ be a set of averaging processes. Suppose that one of the following two sets of conditions holds:
\begin{enumerate}
\item $\cA$ contains an interior averaging process and $f$ is lower semicontinuous and bounded from below by a $\nu$-integrable concave function $\varphi:J\to\ol\RR$;
\item $\cA$ contains a strictly interior averaging process and $f$ is Borel.
\end{enumerate}
Then
\begin{equation*}
\wt\bfS_{\mu,\nu}(f)
\leq \bfS_{\mu,\nu}(f,\cA).
\end{equation*}
\end{proposition}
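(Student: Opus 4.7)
The plan is to reduce both cases to a single construction on Skorokhod space: given any intermediate law $\mu\leq_c\theta\leq_c\nu$, the three-marginal analogue of Proposition~\ref{prop:Strassen} furnishes a discrete-time coupling $Q\in\cM^d(\mu,\theta,\nu)$, and for each $s\in(0,T)$ I push $Q$ forward to a measure $P^s\in\cM(\mu,\nu)$ concentrated on the piecewise-constant paths $\omega_u = x_0\1_{[0,s)}(u) + x_1\1_{[s,T)}(u) + x_2\1_{\{T\}}(u)$. Such paths are c\`adl\`ag and the canonical process is readily checked to be a $P^s$-martingale with $X_s\sim\theta$. For any interior $A$ (using $\Delta A_T=0$ and $A_{T-}=1$) the averaging integral then collapses to
\begin{equation*}
\int_{[0,T]} X_u\,\diff A_u = X_0 A_{s-} + X_s(1-A_{s-}).
\end{equation*}

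For case~(ii), fix the strictly interior $A\in\cA$ with $A_t\equiv 0$ for some $t\in(0,T)$ and pick $s\in(0,t)$: then $A_{s-}\equiv 0$, the integral equals $X_s\sim\theta$ pointwise, and $\theta(f)=E^{P^s}[f(X_s)]\leq \bfS_{\mu,\nu}(f,\cA)$ is immediate; passing to the supremum over $\theta$ finishes the case. For case~(i), where $A$ is only interior, I pick $s_n\downarrow 0$ and set $P_n:=P^{s_n}$. Under $P_n$, $X$ is constant equal to $X_0$ on $[0,s_n)$, so by adaptedness $A_{s_n-}$ agrees $P_n$-a.s.\ with a bounded measurable function of $X_0$ alone; the interior condition $A_0\equiv 0$ together with right-continuity of $A$ at $0$ forces this function to tend to $0$ pointwise. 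Dominated convergence under the fixed marginal $\mu$ then gives $E^{P_n}[A_{s_n-}(|X_0|+|X_{s_n}|)]\to 0$, using the fixed first moments of $\mu$ and $\theta$, so $Y_n-X_{s_n}\to 0$ in $L^1(P_n)$ where $Y_n:=\int X\,\diff A$. Since the law of $X_{s_n}$ under $P_n$ is $\theta$ for every $n$, the laws of $Y_n$ converge weakly to $\theta$.

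It remains to pass to the limit with the lower semicontinuous $f$ bounded below by the concave $\nu$-integrable $\varphi$. I split $f = (f-\varphi) + \varphi$; the first summand is nonnegative and LSC, so the portmanteau theorem gives $\liminf_n E^{P_n}[(f-\varphi)(Y_n)] \geq \theta(f-\varphi)$. For the continuous concave summand $\varphi$---which has at most linear growth on $J$ and is $\mu$-, $\theta$- and $\nu$-integrable (the first two by $\mu\leq_c\theta\leq_c\nu$ together with the $\nu$-integrability hypothesis and the linear growth of $\varphi^+$)---uniform integrability of $|Y_n|$, controlled by the fixed marginals $\mu$ and $\theta$, combined with $Y_n\to X_{s_n}$ in $P_n$-probability yields $E^{P_n}[\varphi(Y_n)]\to\theta(\varphi)$. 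Adding the two bounds gives $\theta(f)\leq\liminf_n E^{P_n}[f(Y_n)]\leq \bfS_{\mu,\nu}(f,\cA)$, and the supremum over $\theta$ closes case~(i). The main obstacle I anticipate is this passage to the limit for $\varphi$: since $\varphi$ is unbounded in general, I cannot rely on simple portmanteau alone and must exploit the fact that the marginals of $X_0$ and $X_{s_n}$ are fixed to $\mu$ and $\theta$ across the whole sequence, rather than try to control $\varphi(Y_n)$ directly on the path space.
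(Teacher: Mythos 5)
Your construction of $P^s$ and the reduction of the averaging integral to $X_0A_{s-}+X_s(1-A_{s-})$, the treatment of case~(ii), and the weak convergence $\cL^{P_n}(Y_n)\to\theta$ are essentially the paper's Lemma~\ref{lem:average:weak approximation} (with jump time $s_n$ in place of $1/n$), and that part is fine. The gap is in the final limit passage of case~(i), where both halves of your split $f=(f-\varphi)+\varphi$ rest on claims that are false under the stated hypotheses. First, $f-\varphi$ need not be lower semicontinuous: $\varphi$ is only concave on $J$ and may jump \emph{downwards} at an endpoint $b\in J\setminus I$ (an atom of $\nu$), so $-\varphi$ is merely upper semicontinuous there; if $\theta(\lbrace b\rbrace)>0$ and $Y_n$ approaches $b$ from the interior (which happens in your construction whenever $A_{s_n-}>0$ on $\lbrace X_{s_n}=b\rbrace$), the Portmanteau bound $\liminf_n E^{P_n}[(f-\varphi)(Y_n)]\geq\theta(f-\varphi)$ can genuinely fail. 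Second, a $\nu$-integrable concave $\varphi$ is neither continuous (same endpoint issue) nor of linear growth: only $\varphi^+$ is dominated by an affine function, while $\varphi^-$ can be superlinear (e.g.\ $\varphi(x)=-x^2$ when $\nu$ has a second moment, or $\varphi$ diverging to $-\infty$ at an open endpoint of $I$). Consequently, uniform integrability of $|Y_n|$ does not control $\varphi(Y_n)^-$, and your argument for $E^{P_n}[\varphi(Y_n)]\to\theta(\varphi)$ does not go through; indeed that limit statement itself is false in general when $\varphi$ jumps at an endpoint atom (the error then points in the harmless direction, but your proof does not see this).

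Both defects require an additional idea. The paper avoids them by truncating: $f_k=f\vee(-k)$ is LSC and bounded below, so Portmanteau applies to $f_k$ directly, and the error is controlled by $0\leq f_k-f\leq(\varphi+k)^-$ together with $\theta_n\big((\varphi+k)^-\big)\leq\nu\big((\varphi+k)^-\big)\to 0$ as $k\to\infty$, using that $(\varphi+k)^-$ is \emph{convex} and that the law $\theta_n$ of $Y_n$ satisfies $\theta_n\leq_c\nu$ (Lemma~\ref{lem:average:convex order}); no continuity or growth property of $\varphi$ is ever needed. If you want to keep your decomposition, you must (a) first pass WLOG to a continuous concave minorant (e.g.\ replace $\varphi$ near an endpoint atom by the chord joining $(b-\varepsilon,\varphi(b-\varepsilon))$ to $(b,\varphi(b))$), and (b) replace the UI argument by concavity: since $Y_n=A_{s_n-}X_0+(1-A_{s_n-})X_{s_n}$ is a convex combination, $\varphi(Y_n)\geq A_{s_n-}\varphi(X_0)+(1-A_{s_n-})\varphi(X_{s_n})$, and dominated convergence under the fixed coupling $Q$ (using $\mu$- and $\theta$-integrability of $\varphi$) yields $\liminf_n E^{P_n}[\varphi(Y_n)]\geq\theta(\varphi)$, which is all your argument actually needs. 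As written, however, the proof of case~(i) is incomplete.
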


The proof of Proposition~\ref{prop:reduction:primal} is given at the end of this section. It is based on the following construction of measures in $\cM(\mu,\nu)$ under which the law of $\int_{[0,T]} X_t \dd A_t$ equals (approximately or exactly) a given $\theta$. This construction also highlights the importance of $\cA$ containing an \emph{interior} averaging process, which does not put any mass on the times $0$ and $T$ at which the marginal distributions of $X$ are given; see Example~\ref{ex:dual auxiliary fail} for a counterexample.

\begin{lemma}
\label{lem:average:weak approximation}
Let $\mu \leq_c \theta \leq_c \nu$.
\begin{enumerate}
\item There is a sequence $(P_n)_{n\geq1}\subset\cM(\mu,\nu)$ such that
\begin{align*}
\cL^{P_n}\Big(\int_{[0,T]}X_t \dd A_t\Big) \xrightarrow{n\to\infty} \theta \quad\text{weakly}
\end{align*}
for every interior averaging process $A$.
\item If $A$ is a strictly interior averaging process, then there is $P \in \cM(\mu,\nu)$ (depending on $A$) such that $\cL^P\big(\int_{[0,T]}X_t \dd A_t\big) = \theta$.
\end{enumerate}
\end{lemma}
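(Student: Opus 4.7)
The plan is to build the relevant measures in $\cM(\mu,\nu)$ as push-forwards of a three-step discrete martingale. Since $\mu \leq_c \theta \leq_c \nu$, the three-marginal analog of Proposition~\ref{prop:Strassen} yields some $Q \in \cM^d(\mu,\theta,\nu)$; write $(y_0,y_1,y_2)$ for its coordinate variables. Both constructions will push $Q$ forward by mapping $(y_0,y_1,y_2)$ to a piecewise constant c\`adl\`ag path with two jumps at prescribed deterministic times; because $Q$ is a three-step martingale, the canonical process will automatically be a martingale under the resulting measure on $\Omega$, with the correct initial and terminal marginals.

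For (ii), the strictly interior hypothesis supplies $t_0,\epsilon_0 \in (0,T)$ with $\supp A(\omega) \subseteq [t_0,T-\epsilon_0]$ for every $\omega$. I would take $P$ to be the push-forward of $Q$ under $(y_0,y_1,y_2)\mapsto\omega$, where $\omega$ equals $y_0$ on $[0,t_0/2)$, $y_1$ on $[t_0/2,T-\epsilon_0/2)$, and $y_2$ on $[T-\epsilon_0/2,T]$. Then $P \in \cM(\mu,\nu)$, and since $X \equiv y_1$ on $\supp A$,
\begin{equation*}
\int_{[0,T]} X_t \dd A_t \;=\; y_1\bigl(A_{(T-\epsilon_0/2)-} - A_{(t_0/2)-}\bigr) \;=\; y_1 \;\sim\; \theta,
\end{equation*}
giving the exact equality claimed in (ii).

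For (i), I would apply the same template with jump times $1/n$ and $T-1/n$ to obtain $P_n \in \cM(\mu,\nu)$. A direct split of the Stieltjes integral, using $A_0 = 0$ from the interior condition, yields
\begin{equation*}
\int_{[0,T]} X_t \dd A_t \;=\; y_1 + (y_0 - y_1)\,A_{(1/n)-} + (y_2 - y_1)\bigl(1 - A_{(T-1/n)-}\bigr).
\end{equation*}
Since the error factors are dominated by the $Q$-integrable $|y_0-y_1|$ and $|y_2-y_1|$, it suffices to show that $A_{(1/n)-} \to 0$ and $1 - A_{(T-1/n)-} \to 0$ in probability under $P_n$; weak convergence of the laws to $\theta$ then follows from dominated convergence and the fact that $y_1 \sim \theta$. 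For the first factor, adaptedness forces $A_{(1/n)-}(\omega) = A_{(1/n)-}(c_{y_0})$ under $P_n$, where $c_{y_0}$ is the constant-$y_0$ path; the interior condition $A_0(c_{y_0})=0$ together with right-continuity of $t\mapsto A_t(c_{y_0})$ then gives $A_{(1/n)-}(c_{y_0})\to 0$ for every $y_0$.

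The hard part will be the second error factor. The pointwise c\`adl\`ag convergence $A_{(T-1/n)-}(\omega) \to A_{T-}(\omega) = 1$ guaranteed by $\Delta A_T = 0$ holds for each \emph{fixed} $\omega$, whereas the evaluation path $\omega^{(n)}$ itself varies with $n$, and $A$ is only adapted rather than continuous as a functional on $\Omega$. The saving structural fact is that $\omega^{(n)}|_{[1/n,T)}\equiv y_1$, so the $n$-dependence of $A_{(T-1/n)-}(\omega^{(n)})$ enters only through the vanishing initial sliver $[0,1/n)$ on which $\omega^{(n)} = y_0$; a careful argument leveraging adaptedness, the uniform bound $A \in [0,1]$, and the interior condition should upgrade the fixed-$\omega$ convergence to the diagonal statement $A_{(T-1/n)-}(\omega^{(n)}) \to 1$ for $Q$-a.e.\ $(y_0,y_1)$. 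Granted this, dominated convergence delivers the weak convergence $\cL^{P_n}\bigl(\int_{[0,T]} X_t \dd A_t\bigr) \to \theta$ for every interior averaging process $A$, completing (i).
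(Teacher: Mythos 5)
There is a genuine gap, and it sits exactly where you flagged it. First, your part (ii) rests on a misreading of the definition: a \emph{strictly} interior averaging process only satisfies $A_t\equiv 0$ for some fixed $t\in(0,T)$ together with $\Delta A_T\equiv 0$; this gives a uniform \emph{left} cutoff but no uniform right gap $\epsilon_0$ with $\supp A(\omega)\subseteq[t_0,T-\epsilon_0]$. Second, the ``hard part'' of your part (i) is not just hard but false: the diagonal convergence $A_{(T-1/n)-}(\omega^{(n)})\to 1$ can fail. Concretely, let $\rho(\omega)$ be the first jump time of $\omega$ capped at $T/2$ (a stopping time, $\cF_{T/2}$-measurable), and set $A_t(\omega)=\bigl(t-(T-\rho(\omega))\bigr)^+/\rho(\omega)$ for $t\in[0,T]$. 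This $A$ is adapted, continuous, nondecreasing, with $A_{T/2}\equiv 0$ and $A_T\equiv 1$, hence a \emph{strictly} interior averaging process; but on your path $\omega^{(n)}$ (constant $y_0$ on $[0,1/n)$, $y_1$ on $[1/n,T-1/n)$, $y_2$ on $[T-1/n,T]$) one has $\rho=1/n$, so $A$ only starts increasing at $T-1/n$ and $A_{(T-1/n)-}(\omega^{(n)})=0$ for all large $n$. Then $\int_{[0,T]}X_t\dd A_t=y_2$ under your $P_n$, so the laws converge to $\nu$, not $\theta$; the same $A$ also defeats any fixed placement of the last jump strictly before $T$, which is why your (ii) cannot be repaired by choosing a different $\epsilon_0$.

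The fix is the one the paper uses: place the terminal value $y_2$ \emph{only at the single time point} $T$, i.e.\ map $(y_0,y_1,y_2)$ to the path $y_0\1_{[0,1/n)}+y_1\1_{[1/n,T)}+y_2\1_{\{T\}}$. Since an interior averaging process has $\Delta A_T=0$, the measure $\dd A$ charges no mass at $\{T\}$ and the terminal error term vanishes \emph{identically}, for every interior $A$; the only remaining error is $(y_0-y_1)A_{(1/n)-}$, which your argument (adaptedness reduces $A_{(1/n)-}$ to its value on the constant path $c_{y_0}$, then $A_0=0$ and right-continuity, then dominated convergence via integrability of $|y_0-y_1|$) handles correctly — that part of your proposal matches the paper. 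For (ii), strict interiority gives $A_{(1/n)-}\equiv 0$ once $1/n$ is below the uniform left cutoff, so the error is exactly zero and $P=P_n$ for large $n$ works. Note the martingale property of the push-forward is not affected by the jump sitting at the terminal time, so membership in $\cM(\mu,\nu)$ is preserved.
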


\begin{proof}
(i): By the two-step adaptation of Proposition~\ref{prop:Strassen}, there exists a measure $Q \in \cM^d(\mu,\theta,\nu)$. For all $n$ large enough, let $\iota^n:\RR^3 \to \Omega$ be the embedding of $\RR^3$ in $\Omega$ which maps $(y_1,y_2,y_3)$ to the piecewise constant path
\begin{align}
\label{eqn:lem:average:weak approximation:pf:embedding}
[0,T]\ni t \mapsto y_1\1_{[0,\frac{1}{n})}(t) + y_2\1_{[\frac{1}{n},T)}(t) + y_3\1_{\{T\}}(t)
\end{align}
(which jumps (at most) at times $\frac{1}{n}$ and $T$), and denote by $P_n := Q \circ (\iota^n)^{-1}$ the associated pushforward measure. Then $P_n \in \cM(\mu,\nu)$ by the corresponding properties of $Q$. Moreover, denoting the canonical process on $\RR^3$ by $(Y_1,Y_2,Y_3)$ and setting $A^n = A \circ \iota^n$ for an interior averaging process $A$, we have
\begin{align} \label{eqn:lem:average:weak approximation:pf:difference}
\int_{[0,T]} (\iota^n)_t \dd A^n_t - Y_2
&= Y_1 A^n_{\frac{1}{n}-} + Y_2 (A^n_{T-}-A^n_{\frac{1}{n}-}) + Y_3 \Delta A^n_T -Y_2A_T \nonumber\\
&= (Y_1 - Y_2)A^n_{\frac{1}{n}-} +(Y_3 - Y_2)\Delta A^n_T \\
&= (Y_1 - Y_2)A^n_{\frac{1}{n}-}\quad\text{on}\quad\RR^3, \nonumber
\end{align}
where we use the properties $A_T = 1$ and $\Delta A_T=0$ of an interior averaging process.

By construction, the law of $\int_{[0,T]} (\iota^n)_t \dd A^n_t$ under $Q$ coincides with the law of $\int_{[0,T]} X_t \dd A_t$ under $P_n$ and the law of $Y_2$ under $Q$ is $\theta$. It thus suffices to prove that the right-hand side in \eqref{eqn:lem:average:weak approximation:pf:difference} converges to zero in $L^1(Q)$ as $n\to\infty$. To this end, note that $|Y_1 - Y_2| \leq |Y_1| + |Y_2|$ is $Q$-integrable because $\mu$ and $\theta$ have finite first moments. Thus, by dominated convergence, it is enough to show that $A^n_{\frac{1}{n}-} \to 0$ pointwise as $n\to0$. So fix $(y_1,y_2,y_3) \in \RR^3$. Since $A$ is $\FF$-adapted, $A_{\frac{1}{n}-}(\omega)$ only depends on the values of the path $\omega$ on the interval $[0,\frac{1}{n})$. In view of the embedding \eqref{eqn:lem:average:weak approximation:pf:embedding}, this means that 
\begin{align*}
A^n_{\frac{1}{n}-}(y_1,y_2,y_3)
&= A_{\frac{1}{n}-}(\iota^n(y_1,y_2,y_3))
= A_{\frac{1}{n}-}(y_1 \1_{[0,T]}),
\end{align*}
where $y_1 \1_{[0,T]}$ denotes the constant path at $y_1$. Hence, the asserted pointwise convergence follows from the fact that $A_0 = 0$ and $A$ is right-continuous.

(ii): If $A$ is a strictly interior averaging process, then the last expression in \eqref{eqn:lem:average:weak approximation:pf:difference} is identically zero for $n$ large enough and setting $P=P_n$ gives the desired result.
\end{proof}

\begin{remark}
\label{rem:implementability}
Part~(i) of Lemma~\ref{lem:average:weak approximation} remains true if we restrict ourselves to martingale measures with almost surely continuous paths. The analog of part~(ii) for continuous martingales requires the additional assumption that there exists $t < T$ such that $A_t \equiv 1$.

The main ingredient for this assertion is \cite[Theorem~11]{Chacon1977}: for every discrete time-martingale $\{Y_n\}_{n \geq 0}$, there is a continuous-time martingale $\{Z_t\}_{t \geq 0}$ with continuous sample paths such that the processes $\{Y_n\}_{n \geq 0}$ and $\{Z_n\}_{n \geq 0}$ have the same (joint) distribution.
\end{remark}

\begin{proof}[Proof of Proposition~\ref{prop:reduction:primal}]
Let $\mu\leq_c\theta\leq_c\nu$. Assume first that condition (ii) holds and let $A$ be a strictly interior averaging process. Then by Lemma~\ref{lem:average:weak approximation}~(ii), there is $P\in\cM(\mu,\nu)$ such that $\cL^P(\int_{[0,T]}X_t\dd A_t) = \theta$. Hence,
\begin{equation*}
\theta(f)
= \EX[P]{f\Big(\int_{[0,T]}X\dd A\Big)}
\leq \bfS_{\mu,\nu}(f,\cA).
\end{equation*}
As $\theta$ was arbitrary, the claim follows.

Next, assume instead that condition (i) holds and let $A$ be an interior averaging process and $\varphi$ as in condition (i). By Lemma~\ref{lem:average:weak approximation}~(i), there is a sequence $(P_n)_{n\in\NN}\subset\cM(\mu,\nu)$ such that  $\theta_n:=\cL^{P_n}(\int_{[0,T]}X_t\dd A_t) \to \theta$ weakly. Define $f_k = f \vee (-k)$, $k \geq 1$. Then $f_k$ is bounded from below and lower semicontinuous, so $\liminf_{n\to\infty} \theta_n(f_k) \geq \theta(f_k)$ by the Portmanteau theorem.

Fix $\varepsilon > 0$. Choose first $k$ large enough such that $\nu((\varphi+k)^-) \leq \frac{\varepsilon}{2}$ and then $N$ large enough such that $\theta_n(f_k)-\theta(f_k) \geq -\frac{\varepsilon}{2}$ for all $n \geq N$. Using that $0 \leq f_k-f \leq (\varphi + k)^-$ and that $(\varphi + k)^-$ is convex, we obtain for $n \geq N$,
\begin{align*}
\theta_n(f) - \theta(f)
&= \theta_n(f-f_k) + \left(\theta_n(f_k) - \theta(f_k)\right) + \theta(f_k - f)\\
&\geq -\theta_n((\varphi+k)^-)-\frac{\varepsilon}{2}
\geq -\nu((\varphi+k)^-) - \frac{\varepsilon}{2}
\geq -\varepsilon.
\end{align*}
Thus, $\liminf_{n\to\infty} \theta_n(f) \geq \theta(f)$. Now the claim follows from
\begin{align*}
\theta(f)
&\leq \liminf_{n\to\infty} \theta_n(f)
= \liminf_{n\to\infty} \EX[P_n]{f\Big(\int_{[0,T]}X_t\dd A_t\Big)}
\leq \bfS_{\mu,\nu}(f,\cA).\qedhere
\end{align*}
\end{proof}

\subsection{Auxiliary dual problem}
\label{sec:dual problem}

Consider the \emph{auxiliary dual problem}
\begin{equation}
\label{eqn:auxiliary dual problem}
\wt\bfI_{\mu,\nu}(f)
= \inf_{(\varphi,\psi)\in\wt\cD_{\mu,\nu}(f)} \lbrace \mu(\varphi) + \nu(\psi) \rbrace,
\end{equation}
where $\wt\cD_{\mu,\nu}(f)$ denotes the set of $(\varphi,\psi) \in L^c(\mu,\nu)$ with concave $\varphi:J\to\ol\RR$ and convex $\psi:J\to\ol\RR$ such that $\varphi + \psi \geq f$ on $J$.

The dual value $\wt\bfI_{\mu,\nu}(f)$ is an upper bound for the robust superhedging price \eqref{eqn:hedging problem}:

\begin{proposition}
\label{prop:reduction:dual}
Let $f:\RR\to[0,\infty]$ be Borel. Then $\bfI_{\mu,\nu}(f,\cA) \leq \wt\bfI_{\mu,\nu}(f)$.
\end{proposition}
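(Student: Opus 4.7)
The plan is to show that every $(\varphi,\psi)\in\wt\cD_{\mu,\nu}(f)$ can be completed to a semi-static superhedge $(\varphi,\psi,H)\in\cD_{\mu,\nu}(f,\cA)$; taking the infimum of the costs $\mu(\varphi)+\nu(\psi)$ over $\wt\cD_{\mu,\nu}(f)$ then dominates $\bfI_{\mu,\nu}(f,\cA)$. I may assume $\wt\bfI_{\mu,\nu}(f)<\infty$ and fix $(\varphi,\psi)\in\wt\cD_{\mu,\nu}(f)$ with $\mu(\varphi)+\nu(\psi)<\infty$. Let $\varphi'$ and $\psi'$ denote the right-derivatives of $\varphi$ and $\psi$ on $I$ (extended by an arbitrary convention to $J\setminus I$). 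On $\hat\Omega$, with canonical process $(X_t,a_t)$, I would set $H=(h,Y)$ via
\begin{equation*}
Y_t:=a_t,\qquad h_0:=\varphi'(X_0)(1-a_0)-\psi'(X_0)a_0,\qquad h_t:=-[\varphi'(X_0)+\psi'(X_t)]\ \text{for}\ t>0;
\end{equation*}
this is the $\hat\Omega$-lift of the introductory strategy \eqref{eqn:intro:dynamic part}. Both $h$ and $Y$ are $\hat\FF$-adapted with $Y$ c\`adl\`ag of finite variation.

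The superhedging inequality \eqref{eqn:def:superhedge} on $\Omega_{\mu,\nu}$ would follow from a chain of three elementary convex-analytic estimates. For $\omega\in\Omega_{\mu,\nu}$ and $A\in\cA$, the integral $\int X\dd A$ is a convex combination of values of $X$ in the interval $J$, hence lies in $J$, so the pointwise bound $f\leq\varphi+\psi$ applies. Concavity of $\varphi$ and the tangent bound at $X_0\in I$ give $\varphi(\int X\dd A)\leq\varphi(X_0)+\varphi'(X_0)\int(X_t-X_0)\dd A_t$. Jensen's inequality for convex $\psi$ against the probability measure $\dd A$ followed by the tangent bound at $X_t$ give $\psi(\int X\dd A)\leq\int\psi(X_t)\dd A_t\leq\psi(X_T)-\int\psi'(X_t)(X_T-X_t)\dd A_t$. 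Summing the three bounds and using $A_T=1$ together with $X_t-X_0=(X_T-X_0)-(X_T-X_t)$, a short algebraic calculation identifies the resulting upper bound as exactly $\varphi(X_0)+\psi(X_T)+H^A\diamond X_T$.

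To verify $H^A\in L(\Omega_{\mu,\nu})$, I would split $\Omega_{\mu,\nu}$ into paths that stay inside a compact subset of $I$---where $\psi'$ is bounded---and paths that are ``captured'' at some boundary point $b\in\partial J$ at a time $\tau\leq T$ and are constant afterwards. For the latter, $X_T-X_t=0$ for $t\geq\tau$, and on $(0,\tau)$ the convexity bound $\psi'(X_t)(X_T-X_t)\leq\psi(X_T)-\psi(X_t)$ remains bounded since $\psi$ is finite and continuous on $J$ (Remark~\ref{rem:psi finite}); a sign argument (both factors have the same sign near capture) gives a corresponding lower bound. Hence $(X_T-X_t)h_t^A$ is pathwise bounded, and in particular $\dd A$-integrable on each $\omega\in\Omega_{\mu,\nu}$.

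The hard part will be the admissibility condition \eqref{eqn:def:superhedge:admissibility condition}. The superhedging inequality combined with $f\geq0$ gives the pathwise lower bound $H^A\diamond X_T\geq-\varphi(X_0)-\psi(X_T)$ on $\Omega_{\mu,\nu}$, whose right-hand side is $P$-integrable for every $P\in\cM(\mu,\nu)$ with expectation $-\{\mu(\varphi)+\nu(\psi)\}$ via Lemma~\ref{lem:concave integral:disintegration} and Definition~\ref{def:BNT integral}. To establish $\EX[P]{H^A\diamond X_T}\leq0$, I would localize with stopping times $\sigma_n:=\inf\{t:|\psi'(X_t)|>n\ \text{or}\ \mathrm{dist}(X_t,\partial J)<1/n\}\wedge T$ and consider the truncated integrand $h\1_{[0,\sigma_n]}$, which is pathwise bounded. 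By Proposition~\ref{prop:integral:consistency} the truncated $\diamond$-integral coincides with the ordinary stochastic integral of a bounded predictable process against the martingale $X$, hence is a $P$-martingale with mean zero. The capture condition and pathwise $\dd A$-integrability force the truncated integrals to converge pathwise on $\Omega_{\mu,\nu}$ to $H^A\diamond X_T$, and Fatou's lemma, applied via the integrable lower bound above, delivers $\EX[P]{H^A\diamond X_T}\leq\liminf_n 0=0$. The delicate point will be making this truncation-and-convergence step rigorous in a way that simultaneously handles the possibly infinite derivatives of $\psi$ near $\partial J$ and the $\diamond$-integral bookkeeping.
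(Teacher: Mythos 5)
Your construction of $H$ and your derivation of the pathwise superhedging inequality \eqref{eqn:def:superhedge} coincide with the paper's Proposition~\ref{prop:dynamic part}, and your integrability sketch is essentially Lemma~\ref{lem:dynamic part:bounded}. The genuine gap is in the admissibility condition \eqref{eqn:def:superhedge:admissibility condition}, which you rightly call the hard part. You assert that $-\varphi(X_0)-\psi(X_T)$ is $P$-integrable with expectation $-\{\mu(\varphi)+\nu(\psi)\}$ ``via Lemma~\ref{lem:concave integral:disintegration} and Definition~\ref{def:BNT integral}''. This is not available: membership in $L^c(\mu,\nu)$ only makes the \emph{generalized} expression $\mu(\varphi)+\nu(\psi)$ well defined, while the individual integrals may both be infinite (Example~\ref{ex:NoRegularIntegrability} exhibits dual optimizers with $\mu(\varphi)=-\infty$ and $\nu(\psi)=\infty$), and the random variable $\varphi(X_0)+\psi(X_T)$ need not lie in $L^1(P)$; its negative part is only controlled by terms like $\vert\psi'(X_0)\vert\,\vert X_T-X_0\vert$, which are not integrable in general. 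Identifying a $P$-expectation with $\mu(\varphi)+\nu(\psi)$ requires inserting the concave moderator and integrating iteratively, and this is only justified when the \emph{whole} expression, static part \emph{plus} compensating trading terms, is bounded from below --- that is exactly the content of Lemma~\ref{lem:expectation}, which the paper applies after the time change $C_s=\inf\{t: A_t>s\}$, using that $\varphi(X_0)+\psi(X_T)+\varphi'(X_0)(X_{C_s}-X_0)-\psi'(X_{C_s})(X_T-X_{C_s})\geq f(X_{C_s})\geq 0$ (see \eqref{eqn:prop:dynamic part:pf:integrand}). Splitting off the gains $H^A\diamond X_T$ and treating the static part separately destroys precisely the cancellation that makes this identification possible.

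The localization-and-Fatou step has two further problems. First, Fatou requires a single $P$-integrable lower bound for the \emph{truncated} gains, uniformly in $n$; the superhedging inequality only bounds the untruncated integral $H^A\diamond X_T$ from below, and no bound is offered for $(X_T-X_0)h_0+\int_{(0,\sigma_n]}(X_T-X_t)h_t\dd A_t$. Second, even with a pathwise bounded integrand, the stochastic integral against $X$ is a priori only a local martingale: $X$ is merely an $L^1$-martingale here (no square-integrability or $H^1$-type condition is assumed), so ``bounded predictable integrand $\Rightarrow$ mean zero'' cannot simply be quoted, and Proposition~\ref{prop:integral:consistency} only identifies the $\diamond$-integral with the pathwise integration-by-parts expression. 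Both issues are resolved in the paper by avoiding stochastic integration altogether: the time change reduces the gains to expressions evaluated at stopping times $C_s$, and Lemma~\ref{lem:expectation} computes their expectation purely by disintegration of the law of $(X_0,X_{C_s},X_T)$ together with the moderator. To complete your proof you should replace the localization argument by this route (or supply the missing uniform lower bound, the martingale property of the truncated integrals, and a rigorous meaning for $E^P[\varphi(X_0)+\psi(X_T)]$, which in effect amounts to reproving Lemma~\ref{lem:expectation}).
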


Proposition~\ref{prop:reduction:dual} follows immediately from the next result (Proposition~\ref{prop:dynamic part}) which shows that every $(\varphi,\psi)\in\wt\cD_{\mu,\nu}(f)$ gives rise to a semi-static superhedge for $f$ and $\cA$. More precisely, the semi-static superhedge is of the form $(\varphi,\psi,H)$ and the dynamic part $H$ can be explicitly written in terms of the ``derivatives'' of $\varphi$ and $\psi$.

Given a convex function $\psi: J \to \RR$, a Borel function $\psi':I \to \RR$ is called a \emph{subderivative} of $\psi$ if for every $x_0 \in I$, $\psi'(x_0)$ belongs to the subdifferential of $\psi$ at $x_0$, i.e.,
\begin{equation*}
\psi(x) - \psi(x_0)
\geq \psi'(x_0)(x-x_0),\quad x \in J.
\end{equation*}
Symmetrically, for a concave function $\varphi: J\to\RR$, a Borel function $\varphi':I \to \RR$ is called a \emph{superderivative} of $\varphi$ if $-\varphi'$ is a subderivative of $-\varphi$.

\begin{remark}
\label{rem:phi psi finite}
If $(\varphi,\psi)\in\wt\cD_{\mu,\nu}(f)$ and $f > -\infty$ on $J$, then $\varphi$ and $\psi$ are both finite (so that sub- and superderivatives are well defined). Indeed, we already know from Remark~\ref{rem:psi finite} that $\psi$ is finite on $J$. Moreover, $\varphi < \infty$ on $J$ by Lemma~\ref{lem:Lc:finite}~(i) and if $f > -\infty$ on $J$, then $\varphi \geq f-\psi > -\infty$, so that also $\varphi$ is finite on $J$.
\end{remark}

\begin{proposition}
\label{prop:dynamic part}
Let $f:\RR\to[0,\infty]$ be Borel and let $(\varphi,\psi)\in\wt\cD_{\mu,\nu}(f)$. Denoting the canonical process on $\hat\Omega$ by $(X,A)$, define the $\hat\FF$-adapted process $h = (h_t)_{t\in[0,T]}$ (on $\hat\Omega$) by
\begin{align}
\label{eqn:prop:dynamic part:h}
\begin{split}
h_0
&= \varphi'(X_0)(1-A_0) - \psi'(X_0)A_0,\\
h_t
&= -\varphi'(X_0) - \psi'(X_t),\quad t \in (0,T],
\end{split}
\end{align}
where $\varphi'$ is any superderivative of $\varphi$, $\psi'$ is any subderivative of $\psi$, and we set $\varphi'=\psi' = 0$ on $\RR\setminus I$.
Set $H = (h,A)$. Then $(\varphi,\psi, H) \in \cD_{\mu,\nu}(f,\cA)$ for any nonempty set $\cA$ of averaging processes.
\end{proposition}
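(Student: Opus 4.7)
The plan is to check the three requirements for $(\varphi,\psi,H)\in\cD_{\mu,\nu}(f,\cA)$: that it is a semi-static trading strategy (Definition~\ref{def:strategy}, i.e.\ $H^A\in L(\Omega_{\mu,\nu})$ for every averaging process $A$), the pathwise superhedging inequality \eqref{eqn:def:superhedge}, and the admissibility condition \eqref{eqn:def:superhedge:admissibility condition}. The computational core is an explicit pathwise simplification of $H^A\diamond X_T$.

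First I would substitute the definition of $h$ into \eqref{eqn:def:integral} and simplify. Writing $\bar X:=\int_{[0,T]}X_t\dd A_t$ and using $A_T=1$, the identity $\int_{(0,T]}(X_T-X_t)\dd A_t = X_T(1-A_0)-\bar X+X_0A_0$ makes the $A_0$-dependent pieces in the first term cancel, while the $(X_T-X_0)\psi'(X_0)A_0$ contribution absorbs exactly the jump of $A$ at $0$ into a single Lebesgue--Stieltjes integral. The resulting clean decomposition is
\begin{align*}
H^A\diamond X_T \;=\; \varphi'(X_0)\,(\bar X - X_0) \;-\; \int_{[0,T]}(X_T-X_t)\psi'(X_t)\dd A_t.
\end{align*}
Pathwise integrability (hence $H^A\in L(\Omega_{\mu,\nu})$) is then read off from the capture property of $\Omega_{\mu,\nu}$: past the hitting time of $\partial J$, $X_T-X_t\equiv0$; on the complementary set $X_t\in I$, so $\psi'(X_t)$ is locally bounded, and even as $X$ approaches a boundary atom the tangent inequality $\psi'(X_t)(X_T-X_t)\leq\psi(X_T)-\psi(X_t)$ (with $\psi$ finite on $J$) keeps the integrand controlled.

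Given the decomposition, the pathwise superhedging inequality is a chain of three elementary convexity arguments: (i) concavity of $\varphi$ at $X_0\in I$ gives $\varphi(X_0)+\varphi'(X_0)(\bar X-X_0)\geq\varphi(\bar X)$; (ii) convexity of $\psi$ at each $X_t\in J$ gives $\psi'(X_t)(X_T-X_t)\leq\psi(X_T)-\psi(X_t)$; (iii) Jensen's inequality for the convex $\psi$ and the probability measure $\dd A_t$ on $[0,T]$ yields $\int_{[0,T]}\psi(X_t)\dd A_t\geq\psi(\bar X)$. Adding $\varphi(X_0)+\psi(X_T)$ to both sides of the decomposition and chaining these bounds produces $\varphi(X_0)+\psi(X_T)+H^A\diamond X_T\geq\varphi(\bar X)+\psi(\bar X)\geq f(\bar X)$, the last step using $\bar X\in J$ (since $J$ is a convex interval containing all $X_t$ on $\Omega_{\mu,\nu}$) and $(\varphi,\psi)\in\wt\cD_{\mu,\nu}(f)$.

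For the admissibility condition, I would recast $H^A\diamond X_T$ as a stochastic integral against the $P$-martingale $X$. A standard Lebesgue--Stieltjes IBP yields $\bar X-X_0 = X_T-X_0-\int_{(0,T]}A_{t-}\dd X_t$, while a second IBP against the finite-variation process $M_s:=\int_{[0,s]}\psi'(X_t)\dd A_t$ gives $\int_{[0,T]}(X_T-X_t)\psi'(X_t)\dd A_t = \int_{(0,T]}M_{t-}\dd X_t$. Combining,
\begin{align*}
H^A\diamond X_T \;=\; \int_{(0,T]}\bigl[\varphi'(X_0)(1-A_{t-})-M_{t-}\bigr]\dd X_t.
\end{align*}
When $\mu(\varphi)+\nu(\psi)<\infty$, the pathwise lower bound $H^A\diamond X_T\geq -(\varphi(X_0)+\psi(X_T))$ (superhedging inequality applied with $f\equiv0$) makes the negative part $P$-integrable, so localizing on $\{|\varphi'(X_0)|\vee|M_{t-}|\leq n\}$ and invoking Fatou yields $\EX[P]{H^A\diamond X_T}\leq0$. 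Combined with $\EX[P]{\varphi(X_0)+\psi(X_T)}=\mu(\varphi)+\nu(\psi)$ in the generalized sense of Definition~\ref{def:BNT integral}, this gives \eqref{eqn:def:superhedge:admissibility condition}. The main technical obstacle is exactly this integrability analysis: both the pathwise bound in the first step and the $L^1$-control needed to turn the stochastic-integral representation into an expectation inequality require the convexity estimates used in tandem with the capture structure of $\Omega_{\mu,\nu}$, since $\varphi'$ and $\psi'$ may diverge near $\partial J$.
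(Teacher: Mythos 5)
Your first two steps coincide with the paper's proof: the pathwise integrability of $(X_T-X_t)\psi'(X_t)\,\dd A_t$ on $\Omega_{\mu,\nu}$ is exactly the content of Lemma~\ref{lem:dynamic part:bounded} (the paper treats the three boundary configurations of $J$ separately, using the capture property plus the tangent inequality, just as you sketch), and your decomposition of $H^A\diamond X_T$ together with the concavity/convexity/Jensen chain is the paper's verification of \eqref{eqn:def:superhedge}. The admissibility condition \eqref{eqn:def:superhedge:admissibility condition}, however, is where your argument has a genuine gap.

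You split $E^P\big[\varphi(X_0)+\psi(X_T)+H^A\diamond X_T\big]$ into $E^P[\varphi(X_0)+\psi(X_T)]$ plus $E^P[H^A\diamond X_T]$ and assert that the former equals $\mu(\varphi)+\nu(\psi)$ ``in the generalized sense of Definition~\ref{def:BNT integral}''. This is not justified: the generalized integral is \emph{not} defined as that expectation, and in general neither $\varphi(X_0)$ nor $\psi(X_T)$ is $P$-integrable (cf.\ Example~\ref{ex:NoRegularIntegrability}, where every relevant dual pair has $\mu(\varphi)=-\infty$ and $\nu(\psi)=+\infty$). The bound $\varphi+\psi\geq f\geq 0$ controls the two functions only when evaluated at the \emph{same} point, so the random variable $\varphi(X_0)+\psi(X_T)$ has no pathwise lower bound and its $P$-expectation can be ill-defined; the only expressions that are pathwise bounded below are those including the trading increments, e.g.\ $\varphi(X_0)+\psi(X_T)+\varphi'(X_0)(X_t-X_0)-\psi'(X_t)(X_T-X_t)\geq f(X_t)\geq 0$. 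For the same reason your Fatou/localization step for $E^P[H^A\diamond X_T]\leq 0$ is not secured: the lower bound $-(\varphi(X_0)+\psi(X_T))$ applies to the full integral rather than to the localized stochastic integrals, and its positive part $(\varphi(X_0)+\psi(X_T))^+$ need not be $P$-integrable, so the passage to the limit lacks an integrable minorant. The paper avoids splitting altogether: it time-changes the portfolio value into $\int_0^1\{\varphi(X_0)+\psi(X_T)+\varphi'(X_0)(X_{C_s}-X_0)-\psi'(X_{C_s})(X_T-X_{C_s})\}\dd s$, notes each $s$-slice is of the form \eqref{eqn:lem:expectation:integrand} and bounded below by $f(X_{C_s})\geq 0$, applies Lemma~\ref{lem:expectation} (iterated integration via a disintegration and the concave moderator) to get the exact identity $\mu(\varphi)+\nu(\psi)$ for each slice, and concludes by Tonelli. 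To repair your route you would need an argument of that type; the naive additive split cannot work in the generality of $L^c(\mu,\nu)$.
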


The proof of Proposition~\ref{prop:dynamic part}, given at the end of this section, relies on the following two technical lemmas. The definition of $\Omega_{\mu,\nu}$ in \eqref{eqn:relevant paths} is crucial for the first one. We recall that (real-valued) c\`adl\`ag functions are bounded on compact intervals.

\begin{lemma}
\label{lem:dynamic part:bounded}
Let $\psi$ and $\psi'$ be as in Proposition~\ref{prop:dynamic part}. For each $\omega\in\Omega_{\mu,\nu}$, the function $[0,T] \ni t \mapsto (\omega_T-\omega_t)\psi'(\omega_t)$ is bounded.
\end{lemma}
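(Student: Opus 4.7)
The plan is to use the capture conditions in \eqref{eqn:relevant paths} to control how $\omega$ behaves near $\partial J$, and then combine this with the convexity of $\psi$.

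First, I would establish the structural claim that $K := \overline{\omega([0,T])}$ is a compact subset of $J$ satisfying $K \cap \partial J \subseteq \{\omega_T\}$. The inclusion $K \subseteq \overline{J}$ is immediate from the definition of $\Omega_{\mu,\nu}$. For the second part, if $p \in K \cap \partial J$, pick a sequence $(t_n) \subseteq [0,T]$ with $\omega_{t_n} \to p$ and, by compactness, $t_n \to t^{\star} \in [0,T]$. Passing to a monotone (or eventually constant) subsequence, the c\`adl\`ag property forces $p \in \{\omega_{t^{\star}}, \omega_{t^{\star}-}\}$, and in either case the capture conditions in \eqref{eqn:relevant paths} imply $\omega_u = p$ for all $u \in [t^{\star}, T]$, so $\omega_T = p$. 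In particular $K \subseteq I \cup \{\omega_T\} \subseteq J$, and $\omega$ is bounded away from every finite endpoint of $I$ other than $\omega_T$.

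Given this structure, I would split into cases by the position of $\omega_t$. If $\omega_t \in \partial J$, then $\psi'(\omega_t) = 0$ by the convention in Proposition~\ref{prop:dynamic part}, so the product vanishes. If $\omega_t \in I$, convexity gives the upper bound
\begin{equation*}
\psi'(\omega_t)(\omega_T - \omega_t)
\leq \psi(\omega_T) - \psi(\omega_t),
\end{equation*}
and $\psi$ is convex and finite on $J$ (Remark~\ref{rem:psi finite}), hence bounded on the compact convex hull of $K \subseteq J$, so this bound is uniform in $t$. For a matching lower bound, the structural claim yields a point $c^{\star} \in K \cap I$ with $c^{\star} \leq \omega_t$ for all such $\omega_t$ (take $c^{\star} = \min K$ if $\omega_T$ is the right endpoint of $I$; the opposite case is symmetric, and if $\omega_T \in I$ then $K$ already lies in a compact subinterval of $I$ where $\psi'$ is bounded). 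Monotonicity of $\psi'$ then gives $\psi'(\omega_t) \geq \psi'(c^{\star})$, which yields
\begin{equation*}
\psi'(\omega_t)(\omega_T - \omega_t)
\geq \min\bigl(\psi'(c^{\star})(\omega_T - c^{\star}),\, 0\bigr),
\end{equation*}
a finite lower bound.

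The main obstacle is the subcase $\omega_T \in \partial J$, where $\psi'(\omega_t)$ may blow up as $\omega_t \to \omega_T$. The key point is that $(\omega_T - \omega_t)$ then vanishes at a compensating rate: convexity controls the product by the increment $\psi(\omega_T) - \psi(\omega_t)$, and the finiteness of $\psi(\omega_T)$ is guaranteed by $\omega_T \in J$. All other potential blow-ups of $\psi'$ near endpoints of $I$ are ruled out by the capture argument, which forces $\omega$ to stay away from those endpoints.
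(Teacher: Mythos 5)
Your proof is correct and takes essentially the same route as the paper's: the capture conditions in \eqref{eqn:relevant paths} force the path to stay away from every finite endpoint of $I$ except possibly $\omega_T$ itself, and the convexity inequality $\psi'(\omega_t)(\omega_T-\omega_t)\leq\psi(\omega_T)-\psi(\omega_t)$ together with finiteness of $\psi$ on $J$ (Remark~\ref{rem:psi finite}) controls the blow-up near a captured endpoint. The only difference is organizational: you package the paper's case analysis on $J=I$, $[l,r)$, $[l,r]$ into one structural claim about $K=\overline{\omega([0,T])}$ and obtain the lower bound from monotonicity of $\psi'$, where the paper uses the equivalent fact that $\psi'$ is bounded above on $[l,r']$.
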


\begin{proof}
Fix $\omega \in \Omega_{\mu,\nu}$ and write $I= (l,r)$ with $l,r \in \ol\RR$. We consider three cases: (i) $J = I$, (ii) $J=[l,r)$, and (iii) $J = [l,r]$. The case $(l,r]$ is symmetric to (ii).

(i): Suppose that $J = I = (l,r)$. We claim that $\omega$ evolves in a compact (and hence strict) subset of $I$. Suppose for the sake of contradiction hat $\inf_{t\in[0,T]} \omega_t = l \in [-\infty,\infty)$. Then there is a sequence $(t_n)_{n\in\NN} \subset [0,T]$ such that $\lim_{n\to\infty} \omega_{t_n} = l$. Passing to a subsequence if necessary, this sequence may be chosen to be either (strictly) increasing or nonincreasing to a limit $t^\star\in [0,T]$. Then, as $\omega$ is c\`adl\`ag, $\omega_{t^\star-} = l$ or $\omega_{t^\star} = l$. But then $\omega_{t^\star}  =l$ in any case by the definition of $\Omega_{\mu,\nu}$, a contradiction to $\omega_{t^\star} \in J = I$. Thus, $\inf_{t\in[0,T]} \omega_t > l$ and symmetrically $\sup_{t\in[0,T]} \omega_t < r$. This proves the claim. It follows that $(\omega_T-\omega_t)\psi'(\omega_t)$ is bounded over $t\in[0,T]$ because the subderivative $\psi'$ is bounded on compact subsets of $I$.

(ii): Suppose that $J = [l,r)$, i.e., $\nu$ has an atom in $l > -\infty$. If $\omega$ evolves in $I$, then we can argue as in (i). We may thus assume that $t^\star := \inf \lbrace t\in [0,T] : \omega_t = l \rbrace \in (0,T]$. Then, as $\omega$ is c\`adl\`ag and by the definition of $\Omega_{\mu,\nu}$, we have $\omega_u = l$ for all $u \in [t^\star,T]$). In particular, $\omega_T = l$ and it is enough to show that $[0,t^\star)\ni t \mapsto (\omega_T - \omega_t)\psi'(\omega_t)$ is bounded.

We can argue similarly as in (i) that $r':=\sup_{t\in[0,T]} \omega_t < r$, so that the path $\omega$ evolves in the compact interval $[l,r']$. Because $\psi$ is convex and finite on $(l,r)$, $\psi'$ is bounded from above on $[l,r']$. It follows that $t\mapsto(\omega_T-\omega_t)\psi'(\omega_t)$ is bounded from below on $[0,t^\star)$. To show that this function is also bounded from above, we observe that by the convexity of $\psi$,
\begin{equation}
\label{eqn:lem:dynamic part:bounded:pf:ii}
(\omega_T-\omega_t)\psi'(\omega_t)
\leq \psi(\omega_T) - \psi(\omega_t)
=\psi(l) - \psi(\omega_t).
\end{equation}
Now $\psi(l)$ is finite because $\nu$ has an atom at $l$, and $\psi$ is bounded from below on $[l,r']$ because it is finite and convex on the compact interval $[l,r']$. Using this in \eqref{eqn:lem:dynamic part:bounded:pf:ii} shows the assertion.

(iii): Suppose that $J = [l,r]$, i.e., $\nu$ has atoms at $l > -\infty$ and $r<\infty$. As in (ii), we may assume that $\omega$ hits one of the endpoints of $J$ before $T$. By symmetry, we may assume that $\omega$ hits $l$. By definition of $\Omega_{\mu,\nu}$, the path $\omega$ is then bounded away from the right endpoint $r$ (otherwise it would be captured in $r$), i.e., $\sup_{t\in[0,T]} \omega_t < r$. Now the same argument as in (ii) proves the assertion.
\end{proof}

The second technical lemma is an adaptation of \cite[Remark~4.10]{BeiglbockNutzTouzi2017} to our setting. It is used to show the admissibility condition \eqref{eqn:def:superhedge:admissibility condition} of the semi-static trading strategy in Proposition~\ref{prop:dynamic part}.

\begin{lemma}
\label{lem:expectation}
Let $(\varphi,\psi) \in L^c(\mu,\nu)$ and let $g_0,g_1:J \to \RR$ be Borel. Let $\tau$ be a $[0,T]$-valued $\FF$-stopping time such that
\begin{equation}
\label{eqn:lem:expectation:integrand}
\varphi(X_0) + \psi(X_T) + g_0(X_0)(X_\tau - X_0) + g_1(X_\tau)(X_T - X_\tau)
\end{equation}
is bounded from below on $\Omega_{\mu,\nu}$. Then for all $P \in \cM(\mu,\nu)$,
\begin{equation*}
\EX[P]{\varphi(X_0) + \psi(X_T) + g_0(X_0)(X_\tau - X_0) + g_1(X_\tau)(X_T - X_\tau)}
= \mu(\varphi)+\nu(\psi).
\end{equation*}
\end{lemma}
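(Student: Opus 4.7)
The plan is to project $P$ onto the discrete skeleton $(X_0,X_\tau,X_T)$, introduce a concave moderator $\chi$ for $(\varphi,\psi)$ (so that $\tilde\varphi:=\varphi-\chi\in L^1(\mu)$, $\tilde\psi:=\psi+\chi\in L^1(\nu)$, and $(\mu-\nu)(\chi)<\infty$), and isolate a nonnegative concavity defect to which Fubini applies cleanly. Fix a superderivative $\chi'$ of $\chi$ on $I$ and extend by $0$ on $J\setminus I$. On $\Omega_{\mu,\nu}$ the integrand in \eqref{eqn:lem:expectation:integrand} equals $\tilde\varphi(X_0)+\tilde\psi(X_T)+A+B$, where
\begin{align*}
A &= \bigl[\chi(X_0)+\chi'(X_0)(X_\tau-X_0)-\chi(X_\tau)\bigr]+\bigl[\chi(X_\tau)+\chi'(X_\tau)(X_T-X_\tau)-\chi(X_T)\bigr],\\
B &= [g_0(X_0)-\chi'(X_0)](X_\tau-X_0)+[g_1(X_\tau)-\chi'(X_\tau)](X_T-X_\tau).
\end{align*}
Both brackets in $A$ are nonnegative on $\Omega_{\mu,\nu}$: since $X_0\in I$ and $X_\tau\in J$, the concavity inequality $\chi(y)\le\chi(x_0)+\chi'(x_0)(y-x_0)$ handles the first bracket; for the second, either $X_\tau\in I$ (same inequality) or $X_\tau\in\partial J$, in which case $X_\tau=X_T$ by the trapping property in \eqref{eqn:relevant paths} and the bracket collapses to~$0$.

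Next, push $P$ forward under $(X_0,X_\tau,X_T)$ to a probability $Q$ on $\RR^3$. Optional sampling and Jensen's inequality give $\mu\le_c\theta\le_c\nu$ with $\theta:=\cL^P(X_\tau)$, and the tower property yields $Q\in\cM^d(\mu,\theta,\nu)$. Disintegrate $Q=\mu\otimes\kappa_1\otimes\kappa_2$ with martingale kernels $\kappa_1$ (from $\mu$ to $\theta$) and $\kappa_2$ (from $\theta$ to $\nu$). Applying Lemma~\ref{lem:concave integral:disintegration} to each pair in turn and using Fubini for nonnegative integrands, we obtain
\begin{align*}
\EX[P]{A}=\EX[Q]{A}=(\mu-\theta)(\chi)+(\theta-\nu)(\chi)=(\mu-\nu)(\chi)<\infty,
\end{align*}
where the last equality follows by direct inspection of Definition~\ref{def:concave integral} (both the potential-function integrals over $I$ and the atomic contributions over $J\setminus I$ telescope in $\theta$). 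In particular $A\in L^1(P)$.

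For $B$, the hypothesis that the full integrand is bounded below on $\Omega_{\mu,\nu}$, together with $\tilde\varphi(X_0)+\tilde\psi(X_T)\in L^1(P)$ and $A\ge 0$, forces $B^-\in L^1(P)$, so $\EX[P]{B}$ is well-defined in $(-\infty,\infty]$. Since $X_T-X_\tau$ has regular conditional mean zero given $\cF_\tau$ and the coefficient $g_1(X_\tau)-\chi'(X_\tau)$ is $\cF_\tau$-measurable, the second summand of $B$ has generalized conditional expectation zero given $\cF_\tau$; analogously the first summand has generalized conditional expectation zero given $\cF_0$. The generalized tower property (valid because $B^-\in L^1(P)$) delivers $\EX[P]{B}=0$. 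Collecting terms,
\begin{align*}
\EX[P]{\varphi(X_0)+\psi(X_T)+g_0(X_0)(X_\tau-X_0)+g_1(X_\tau)(X_T-X_\tau)}
&=\mu(\tilde\varphi)+\nu(\tilde\psi)+(\mu-\nu)(\chi)\\
&=\mu(\varphi)+\nu(\psi).
\end{align*}

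The main obstacle is the step $\EX[P]{B}=0$: because $g_0,g_1$ are only Borel, neither summand of $B$ need be individually $P$-integrable, and the naive ``martingale increment has zero mean'' argument must be routed through the regular/generalized conditional expectation framework, with the bounded-below hypothesis providing exactly the integrable lower bound needed to legitimize the generalized tower. An equivalent route is to truncate $g_0,g_1$ and $\chi'$ at level $n$, apply ordinary Fubini to the resulting $L^1$ expressions, and pass to the limit invoking the lower bound.
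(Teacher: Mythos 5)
Your proposal is correct in substance, but it is organized differently from the paper's proof, and the comparison is instructive. The paper keeps the remainder in one piece: after subtracting the moderator, the whole bracket $\chi(X_0)-\chi(X_T)+g_0(X_0)(X_\tau-X_0)+g_1(X_\tau)(X_T-X_\tau)$ (your $A+B$) has $P$-integrable negative part by the lower-bound hypothesis, so a single application of Fubini with integrable negative part to the disintegration $\mu\otimes\kappa_0\otimes\kappa_1$ of $\cL^P(X_0,X_\tau,X_T)$ evaluates its expectation iteratively; the $g_0,g_1$ terms die inside the inner integrals because the kernels are martingale kernels, and Lemma~\ref{lem:concave integral:disintegration} identifies the result as $(\mu-\nu)(\chi)$ — no telescoping identity and no finiteness of $(\mu-\nu)(\chi)$ are needed, and the argument works verbatim even if the expression were $+\infty$. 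You instead split off the nonnegative concavity defect $A$, compute $\EX[P]{A}=(\mu-\theta)(\chi)+(\theta-\nu)(\chi)=(\mu-\nu)(\chi)$ by Tonelli and two applications of Lemma~\ref{lem:concave integral:disintegration} (your telescoping check is correct, and $\theta$ is indeed concentrated on $J$), and then kill $B$ by conditioning; this buys a transparent probabilistic reading of why the $g_i$ terms contribute nothing, at the cost of needing the moderator with $(\theta_1-\theta_2)$-mass finite (available since $(\varphi,\psi)\in L^c(\mu,\nu)$) so that $A\in L^1(P)$ and $B$ is quasi-integrable. One caution on your $\EX[P]{B}=0$ step: neither summand $B_1=[g_0(X_0)-\chi'(X_0)](X_\tau-X_0)$ nor $B_2=[g_1(X_\tau)-\chi'(X_\tau)](X_T-X_\tau)$ is known to be quasi-integrable on its own, so the two "zero generalized conditional expectations" cannot be added; the conditioning must be nested — first $\cEX[P]{B}{\cF_\tau}=B_1$ (legitimate because $\cEX[P]{|X_T-X_\tau|}{\cF_\tau}<\infty$ a.s.\ and the coefficient is a.s.\ finite, so $B_2$ is conditionally integrable), then note $B_1^-\leq \cEX[P]{B^-}{\cF_\tau}\in L^1(P)$, and finally $\cEX[P]{B_1}{\cF_0}=0$ by optional sampling — which is exactly the nested structure the paper's iterated integral encodes. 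Your alternative truncation route, as stated, is not justified: after truncating $g_0,g_1$ the lower-bound hypothesis no longer applies, so the passage to the limit lacks a dominating bound; I would drop that aside or replace it by the nested-conditioning argument.
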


\begin{proof}
Let $\chi$ be a concave moderator for $(\varphi,\psi)$ with respect to $\mu \leq_c \nu$ and let $\theta$ be the law of $X_\tau$. By optional stopping, $\mu \leq_c \theta \leq_c \nu$. We expand \eqref{eqn:lem:expectation:integrand} to
\begin{align}
\label{eqn:lem:expectation:pf:decomposition}
(\varphi - \chi)(X_0) &+ (\psi + \chi)(X_T)
+ [\chi(X_0) - \chi(X_T) \\ &+ g_0(X_0)(X_\tau - X_0) + g_1(X_\tau)(X_T - X_\tau)], \nonumber
\end{align}
and observe that the first two terms are $P$-integrable. Then the assumed lower bound yields that the last term has a $P$-integrable negative part. We can therefore apply Fubini's theorem and evaluate its integral iteratively. To this end, let $Q$ be the law of $(X_0,X_\tau,X_T)$ on the canonical space $\RR^3$ with a disintegration 
\begin{align*}
\diff Q
&= \mu(\diff x_0) \otimes \kappa_0(x_0,\diff x_1) \otimes \kappa_1(x_0,x_1,\diff x_2)
\end{align*}
for martingale kernels $\kappa_0$ and $\kappa_1$. In view of the definition of $\mu(\varphi) + \nu(\psi)$ in \eqref{eqn:def:BNT integral}, we have to show that the $P$-expectation of the last term in \eqref{eqn:lem:expectation:pf:decomposition} is $(\mu-\nu)(\chi)$.

To this end, we observe that for $\mu\otimes\kappa_0$-a.e.~$(x_0,x_1) \in J^2$,
\begin{align}
\label{eqn:lem:expectation:pf:10}
\int_J &\left[ \chi(x_0) - \chi(x_2) + g_0(x_0)(x_1-x_0) + g_1(x_1)(x_2-x_1) \right]\,\kappa_1(x_0,x_1,\diff x_2) \nonumber \\
&\;= \int_J \left[\chi(x_0) - \chi(x_2) + g_0(x_0)(x_1-x_0) \right]\,\kappa_1(x_0,x_1,\diff x_2)\\
&\;= \chi(x_0) - \int_J \chi(x_2) \kappa_1(x_0,x_1,\diff x_2) + g_0(x_0)(x_1-x_0). \nonumber
\end{align}
Integrating the left-hand side of \eqref{eqn:lem:expectation:pf:10} against $\mu\otimes\kappa_0$ gives the $P$-expectation of the last term in \eqref{eqn:lem:expectation:pf:decomposition}. It thus remains to show that the corresponding integral of the right-hand side equals $(\mu-\nu)(\chi)$. Integrating the right-hand side of \eqref{eqn:lem:expectation:pf:10} first against $\kappa_0(x_0,\diff x_1)$ yields for $\mu$-a.e.~$x_0\in J$,
\begin{align}
\label{eqn:lem:expectation:pf:20}
\chi(x_0) - \int_J\chi(x_2) \,\kappa(x_0,\diff x_2),
\end{align}
where $\kappa(x_0,\cdot) = \int_J \kappa_1(x_1,\cdot)\,\kappa_0(x_0,\diff x_1)$ is again a martingale kernel. Finally, the integral of \eqref{eqn:lem:expectation:pf:20} against $\mu$ is
\begin{align*}
\int_J \left[ \chi(x_0) - \int_J\chi(x_2) \,\kappa(x_0,\diff x_2) \right] \mu(\diff x_0).
\end{align*}
Noting that $\mu\otimes\kappa$ is a disintegration of a one-step martingale measure on $\RR^2$ with marginals $\mu$ and $\nu$, the last term equals $(\mu-\nu)(\chi)$ by Lemma~\ref{lem:concave integral:disintegration}.
\end{proof}

\begin{proof}[Proof of Proposition~\ref{prop:dynamic part}]
First, we show that $(\varphi,\psi,H)$ is a semi-static trading strategy. As $h$ and $A$ are clearly $\hat\FF$-adapted and $(\varphi,\psi)\in L^c(\mu,\nu)$ by assumption, it remains to check condition \eqref{eqn:def:strategy:pathwise integrability} (with $Y^A$ replaced by $A$). So fix an averaging process $A$ and note that $H^A = (h^A,A)$. The only nontrivial part in proving $H^A \in L(\Omega_{\mu,\nu})$ is to show that $(X_T-X_t)h^A_t$ is $\diff A$-integrable on $(0,T]$ for each path in $\Omega_{\mu,\nu}$. To this end, note that $\varphi'(X_0)$ and $\psi'(X_0)$ are finite because $X_0 \in I$. It thus suffices to show that $(X_T-X_t)\psi'(X_t)$ is bounded on $[0,T]$ for each path in $\Omega_{\mu,\nu}$; this is the content of Lemma~\ref{lem:dynamic part:bounded}.

Second, we show the superhedging property \eqref{eqn:def:superhedge}. Fix an averaging process $A$ and a path in $\Omega_{\mu,\nu}$. To ease the notation, we write $h$ instead of $h^A$ in the following. Note, however, that $h^A$ has the same formal expression as $h$ in \eqref{eqn:prop:dynamic part:h}, but with $A$ being the fixed averaging process (and not the second component of the canonical process on $\hat\Omega$).

Using the definitions of $H\diamond X_T$ and $h$ as well as the fact that $A_0 = \Delta A_0$, we obtain
\begin{align*}
H \diamond X_T
&= (X_T-X_0)h_0 + \int_{(0,T]}(X_T-X_t) h_t \dd A_t\\
&= (X_T-X_0)\varphi'(X_0) - \int_{[0,T]} (X_T-X_t) (\varphi'(X_0) + \psi'(X_t)) \dd A_t.
\end{align*}
Then, using that $\diff A$ is a probability measure on $[0,T]$, the concavity of $\varphi$ and the convexity of $\psi$, and Jensen's inequality, we can estimate
\begin{align}
H \diamond X_T
&= \int_{[0,T]} \varphi'(X_0)(X_t - X_0) \dd A_t - \int_{[0,T]} \psi'(X_t)(X_T - X_t) \dd A_t \label{eqn:prop:dynamic part:pf:calculation}\\
& \geq \varphi'(X_0) \left(\int_{[0,T]} X_t \dd A_t - X_0\right) - \int_{[0,T]}\left( \psi(X_T) - \psi(X_t)\right) \dd A_t \notag\\
& \geq \varphi\left(\int_{[0,T]} X_t \dd A_t\right) - \varphi(X_0) - \psi(X_T) + \int_{[0,T]} \psi(X_t) \dd A_t \notag\\
& \geq \varphi\left(\int_{[0,T]} X_t \dd A_t\right) - \varphi(X_0) - \psi(X_T) + \psi\left(\int_{[0,T]} X_t \dd A_t\right).\notag
\end{align}
Rearranging terms and using that $\varphi + \psi \geq f$ on $J$, we find
\begin{align*}
\varphi(X_0) + \psi(X_T) + H \diamond X_T \geq f\left(\int_{[0,T]} X_t \dd A_t\right).
\end{align*}

Third, we show the admissibility condition \eqref{eqn:def:superhedge:admissibility condition}. Fix an averaging process $A$ and $P \in \cM(\mu,\nu)$. Define the family of $\FF$-stopping times $C_s$, $s\in(0,1)$, by
\begin{align*}
C_s
&= \inf\lbrace t \in [0,T] : A_t > s \rbrace
\end{align*}
and note that $0 \leq C_s \leq T$ for $s\in(0,1)$ because $A_T =1$. Then using the family $C_s$ as a time change (cf.~\cite[Proposition~0.4.9]{RevuzYor1999}) for the integral in \eqref{eqn:prop:dynamic part:pf:calculation} yields
\begin{align} \label{eqn:prop:dynamic part:pf:integrand}
&\varphi(X_0) + \psi(X_T) + H\diamond X_T \\
&\;=\int_0^1 \lbrace \varphi(X_0) + \psi(X_T) +\varphi'(X_0) (X_{C_s} - X_0) - \psi'(X_{C_s})(X_T-X_{C_s})\rbrace \dd s. \nonumber
\end{align}
Now, suppose that the integrand in \eqref{eqn:prop:dynamic part:pf:integrand} is bounded from below, uniformly over $s\in(0,1)$ and $\omega\in\Omega_{\mu,\nu}$. Then by Lemma~\ref{lem:expectation}, the $P$-expectation of the integrand equals $\mu(\varphi) + \nu(\psi)$ for each $s\in(0,1)$. Using this together with Tonelli's theorem and \eqref{eqn:prop:dynamic part:pf:integrand} gives
\begin{align*}
\EX[P]{\varphi(X_0) + \psi(X_T) + H \diamond X_T}
&= \mu(\varphi) + \nu(\psi),
\end{align*}
so that \eqref{eqn:def:superhedge:admissibility condition} holds.

It remains to show that the integrand in \eqref{eqn:prop:dynamic part:pf:integrand} is uniformly bounded from below. This follows from concavity of $\varphi$ and convexity of $\psi$ together with the fact that $\varphi + \psi \geq f \geq 0$ on $J$:
\begin{align*}
\varphi(X_0) &+ \psi(X_T) +\varphi'(X_0) (X_t - X_0) - \psi'(X_t)(X_T-X_t)\\
&\;\geq \varphi(X_t) + \psi(X_t)
\geq f(X_t)
\geq 0, \quad t\in[0,T].
\end{align*}
This completes the proof.
\end{proof}

\subsection{Duality}
\label{sec:auxiliary duality}

We now turn to the duality between the auxiliary problems $\wt\bfS_{\mu,\nu}(f)$ and $\wt\bfI_{\mu,\nu}(f)$.

\begin{theorem}
\label{thm:StrongAuxiliaryDuality}
Let $\mu \leq_c \nu$ be irreducible with domain $(I,J)$ and let $f: \RR \to [0,\infty]$.
\begin{enumerate}
\item If $f$ is upper semianalytic, then $\wt\bfS_{\mu,\nu}(f) = \wt\bfI_{\mu,\nu}(f) \in [0,\infty]$.
\item If $\wt\bfI_{\mu,\nu}(f) < \infty$, then there exists a dual minimizer $(\varphi,\psi) \in \wt\cD_{\mu,\nu}(f)$.
\end{enumerate}
\end{theorem}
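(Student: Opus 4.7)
My plan is to follow the strategy of \cite{BeiglbockNutzTouzi2017}: establish weak duality directly from the integration rules, reduce to bounded upper semicontinuous payoffs via Choquet capacitability, prove strong duality for the reduced problem via weak compactness of the primal feasible set plus a minimax argument, and establish dual attainment via Helly selection on a normalized minimizing sequence. For \emph{weak duality}, fix $(\varphi,\psi)\in\wt\cD_{\mu,\nu}(f)$ and $\mu\leq_c\theta\leq_c\nu$. Applying Lemma~\ref{lem:ConvexIntegrationRules}~(iii) to the chain $\mu\leq_c\theta\leq_c\theta\leq_c\nu$ gives $\theta(\varphi)+\theta(\psi)\leq\theta(\varphi)+\nu(\psi)$, and part~(iv) applied to $\mu\leq_c\theta\leq_c\nu$ gives $\theta(\varphi)+\nu(\psi)\leq\mu(\varphi)+\nu(\psi)$. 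Since $\theta$ is concentrated on $J$ and $\varphi+\psi\geq f\geq 0$ there, this gives $\theta(f)\leq\mu(\varphi)+\nu(\psi)$, whence $\wt\bfS_{\mu,\nu}(f)\leq\wt\bfI_{\mu,\nu}(f)$.

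For the reverse inequality I first assume $f$ bounded and upper semicontinuous. The feasible set $\cK:=\{\theta: \mu\leq_c\theta\leq_c\nu\}$ is nonempty by iterated Strassen (Proposition~\ref{prop:Strassen}), convex, and weakly compact: total mass is fixed and the convex-order bound by $\nu$ supplies uniform first-moment control and hence tightness. The functional $\theta\mapsto\theta(f)$ is then weakly upper semicontinuous and attains its supremum at some $\theta^\star\in\cK$. Dualizing the two convex-order constraints with concave and convex multipliers (as in the formal derivation of Section~\ref{sec:motivation}) and applying Sion's minimax theorem on $\cK$ against the convex cones of concave/convex test functions swaps sup and inf and recovers $\wt\bfI_{\mu,\nu}(f)$ with no gap; this simultaneously produces a dual minimizer. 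The general upper semianalytic case is then handled by approximating $f$ from below by a nondecreasing sequence of bounded upper semicontinuous $f_n$ via Choquet capacitability as in~\cite[Section~5]{BeiglbockNutzTouzi2017}, using that $\wt\bfS_{\mu,\nu}(f_n)\uparrow\wt\bfS_{\mu,\nu}(f)$ and that dual feasibility survives in the limit.

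For \emph{dual attainment} at a general upper semianalytic $f$ with $\wt\bfI_{\mu,\nu}(f)<\infty$, a more hands-on compactness argument also works: take a minimizing sequence $(\varphi_n,\psi_n)$, normalize by affine shifts (Lemma~\ref{lem:Lc:finite}~(iii)) so that both functions vanish together with their one-sided derivatives at a reference point $x_0\in I$, and apply Helly's theorem to the monotone sub- and superderivatives to extract a subsequence converging pointwise on $I$ to a concave $\varphi^\star$ and convex $\psi^\star$ satisfying $\varphi^\star+\psi^\star\geq f$; boundary behavior at atoms of $\nu$ in $J\setminus I$ is pinned down via Lemma~\ref{lem:Lc:finite}. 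The \textbf{main obstacle} is the passage to the limit in the combined integral $\mu(\varphi_n)+\nu(\psi_n)$ of Definition~\ref{def:BNT integral}: the individual integrals may diverge even though their formal sum stays bounded. The idea is to exhibit a common concave moderator $\chi^\star$ valid for the limit pair, establish uniform integrability of $\varphi_n-\chi^\star$ against $\mu$ and of $\psi_n+\chi^\star$ against $\nu$ from the uniform bound on $(\mu-\nu)(\chi_n)$ encoded in Definition~\ref{def:Lc}, and then invoke Fatou's lemma to conclude $\mu(\varphi^\star)+\nu(\psi^\star)\leq\liminf_n[\mu(\varphi_n)+\nu(\psi_n)]=\wt\bfI_{\mu,\nu}(f)$. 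Irreducibility of $\mu\leq_c\nu$ is essential here, both in controlling the finiteness of $\varphi^\star$ and $\psi^\star$ (Remarks~\ref{rem:psi finite} and~\ref{rem:phi psi finite}) and in ruling out pathological boundary blow-ups, so that the limit pair genuinely belongs to $\wt\cD_{\mu,\nu}(f)$.
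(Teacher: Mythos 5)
Your weak-duality step coincides with the paper's (it is exactly the chain obtained from Lemma~\ref{lem:ConvexIntegrationRules}~(iii)--(iv)), but the core step -- strong duality for bounded upper semicontinuous $f$ -- has a genuine gap. To recover the dual constraint $\varphi+\psi\geq f$ from the Lagrangian of Section~\ref{sec:motivation}, the inner supremum must run over \emph{all} finite measures, and that set is not weakly compact; if instead you restrict the supremum to the compact set $\lbrace\theta:\mu\leq_c\theta\leq_c\nu\rbrace$, as you propose, then $\sup_\theta L(\theta,\psi_1,\psi_2)=\wt\bfS_{\mu,\nu}(f+\psi_1-\psi_2)-\mu(\psi_1)+\nu(\psi_2)$, and identifying $\inf_{\psi_1,\psi_2}$ of this expression with $\wt\bfI_{\mu,\nu}(f)$ is essentially the duality you are trying to prove (it amounts to $\wt\bfI\leq\wt\bfS$ for the shifted payoffs), so the argument is circular. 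In addition, the pairing $\theta\mapsto\theta(\psi_i)$ against unbounded convex multipliers is not weakly continuous without restricting the multiplier class, Sion's theorem cannot ``simultaneously produce a dual minimizer'' (there is no compactness on the multiplier side), and any minimax run with individually integrable multipliers collides with the need for the relaxed integral of Definition~\ref{def:BNT integral} (cf.\ Example~\ref{ex:NoRegularIntegrability}). The paper instead proves Lemma~\ref{lem:ContinuousDuality} by Hahn--Banach separation in the weighted space $C_\zeta$, and the decisive input there is the closedness of the cone $\lbrace g: \wt\bfI_{\mu,\nu}(g)\leq 0\rbrace$, i.e.\ Proposition~\ref{prop:DualClosedness} -- a result your proposal never establishes.

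The same missing proposition undermines your remaining two steps. A general upper semianalytic $f$ is \emph{not} the pointwise limit of a nondecreasing sequence of bounded upper semicontinuous functions; the capacitability argument works because $\wt\bfI_{\mu,\nu}$ is continuous along arbitrary increasing sequences (exactly Proposition~\ref{prop:DualClosedness}) while $\wt\bfS_{\mu,\nu}$ is continuous along decreasing u.s.c.\ sequences, and your phrase ``dual feasibility survives in the limit'' is precisely the unproven closedness statement. For attainment, you correctly identify the main obstacle (passing to the limit in $\mu(\varphi_n)+\nu(\psi_n)$) but only offer an idea; moreover the normalization you describe is not available: one can transfer an affine function between $\varphi_n$ and $\psi_n$ without changing the constraint or the cost (since $\mu$ and $\nu$ share mass and barycenter), but shifting both functions independently so that each vanishes with its derivative at $x_0$ changes the cost by $\mu(a_n)+\nu(b_n)$ and the constraint by $a_n+b_n$, so the local uniform bounds needed for Helly are not in place, and no mechanism is given for the claimed uniform integrability. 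The paper resolves both issues at once by adapting the Komlos-based Proposition~5.2 of \cite{BeiglbockNutzTouzi2017} to obtain Proposition~\ref{prop:DualClosedness}, after which attainment is immediate by applying it to a minimizing sequence with $f_n\equiv f$.
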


A couple of remarks are in order.

\begin{remark}
We only state the duality for one irreducible component. One can formulate and prove the full duality for arbitrary marginals $\mu \leq_c \nu$ in analogy to \cite[Section~7]{BeiglbockNutzTouzi2017}. We omit the details in the interest of brevity.
\end{remark}

\begin{remark}
\label{rem:auxiliary duality:relax lower bound}
The lower bound on $f$ in Theorem~\ref{thm:StrongAuxiliaryDuality} can be relaxed. Indeed, suppose that $f:\RR\to\ol\RR$ is upper semianalytic and bounded from below by an affine function $g$.

We first consider the primal problem. Because $g$ is affine and any $\mu\leq_c\theta\leq_c\nu$ has the same mass and barycenter as $\mu$,
\begin{align*}
\theta(f-g)
&= \theta(f) - \theta(g)
= \theta(f) - \mu(g).
\end{align*}
Thus,
\begin{align}
\label{eqn:rem:auxiliary duality:extension:S}
\wt\bfS_{\mu,\nu}(f-g)
&= \wt\bfS_{\mu,\nu}(f) - \mu(g).
\end{align}

Regarding the dual problem, we note that $(\varphi,\psi)\in\wt\cD_{\mu,\nu}(f-g)$ if and only if $(\varphi+g,\psi)\in\wt\cD_{\mu,\nu}(f)$ and that by Lemma~\ref{lem:Lc:finite}~(iii),
\begin{align*}
\mu(\varphi) + \nu(\psi)
&= \lbrace \mu(\varphi+g) + \nu(\psi) \rbrace - \mu(g).
\end{align*}
Hence,
\begin{align}
\label{eqn:rem:auxiliary duality:extension:I}
\wt\bfI_{\mu,\nu}(f-g)
&= \wt\bfI_{\mu,\nu}(f) - \mu(g).
\end{align}
Because $f-g$ is nonnegative, the left-hand sides of \eqref{eqn:rem:auxiliary duality:extension:S}--\eqref{eqn:rem:auxiliary duality:extension:I} coincide by Theorem~\ref{thm:StrongAuxiliaryDuality}~(i). Therefore, $\wt\bfS_{\mu,\nu}(f) = \wt\bfI_{\mu,\nu}(f) \in (-\infty,\infty]$.

Moreover, if $\wt\bfI_{\mu,\nu}(f) < \infty$, then also $\wt\bfI_{\mu,\nu}(f-g) < \infty$ and a dual minimizer $(\varphi,\psi)\in\wt\cD_{\mu,\nu}(f-g)$ for $\wt\bfI_{\mu,\nu}(f-g)$ exists by Theorem~\ref{thm:StrongAuxiliaryDuality}~(ii). Now the above shows that $(\varphi+g,\psi) \in \wt\cD_{\mu,\nu}(f)$ is a dual minimizer for $\wt\bfI_{\mu,\nu}(f)$.
\end{remark}

The proof of Theorem~\ref{thm:StrongAuxiliaryDuality} is based on several preparatory results. We start with the crucial closedness property of the dual space in the spirit of \cite[Proposition~5.2]{BeiglbockNutzTouzi2017}.

\begin{proposition}
\label{prop:DualClosedness}
Let $\mu \leq_c \nu$ be irreducible with domain $(I,J)$, let $f,f_n: J \to [0,\infty]$ be such that $f_n \to f$ pointwise, and let $(\varphi_n,\psi_n) \in \wt\cD_{\mu,\nu}(f_n)$ with $\sup_n \lbrace\mu(\varphi_n) + \nu(\psi_n)\rbrace < \infty$. Then there is $(\varphi,\psi) \in \wt\cD_{\mu,\nu}(f)$ such that $\mu(\varphi) + \nu(\psi) \leq \liminf_{n \to \infty} \lbrace \mu(\varphi_n) + \nu(\psi_n) \rbrace$.
\end{proposition}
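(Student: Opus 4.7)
My plan is to mirror the template of \cite[Proposition~5.2]{BeiglbockNutzTouzi2017}. After an affine normalization of the pair $(\varphi_n,\psi_n)$ that leaves both the cost and the superhedging constraint invariant, I extract pointwise-convergent subsequences of $\psi_n$ and $\varphi_n$ using the uniform bound on the cost, then verify that the limits $(\varphi,\psi)$ lie in $\wt\cD_{\mu,\nu}(f)$ and satisfy the required Fatou-type estimate on the cost.

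For the normalization, observe that for any affine $a:\RR\to\RR$, $\mu(a)=\nu(a)$ because $\mu\leq_c\nu$ forces equal masses and equal barycenters. Hence, by Lemma~\ref{lem:Lc:finite}~(iii), replacing $(\varphi_n,\psi_n)$ by $(\varphi_n+a_n,\psi_n-a_n)$ leaves the sum $\varphi_n+\psi_n$ (hence the superhedging constraint) and the cost $\mu(\varphi_n)+\nu(\psi_n)$ unchanged. Fix $x_0\in I$ and note that $\psi_n$ is convex and finite on $J$ (Remark~\ref{rem:phi psi finite}, since $f_n\geq 0>-\infty$). Choose $a_n$ to be an affine support of $\psi_n$ at $x_0$; after replacement we may assume $\psi_n\geq 0$ on $J$ with $\psi_n(x_0)=0$, so that also $\varphi_n\geq -\psi_n$ on $J$.

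With this normalization, applying Definition~\ref{def:BNT integral} with moderator $\chi_n=-\psi_n$ gives the decomposition
\begin{align*}
\mu(\varphi_n)+\nu(\psi_n)
&= \mu(\varphi_n+\psi_n) + \tfrac{1}{2}\int_I (u_\nu-u_\mu)\dd\psi_n''+\int_{J\setminus I}|\Delta\psi_n|\dd(\nu-\mu),
\end{align*}
in which all three summands are nonnegative and hence individually bounded uniformly in $n$. Irreducibility gives $u_\nu-u_\mu>0$ on $I$, so on each compact $K\subset I$ the measures $\psi_n''(K)$ are uniformly bounded; hence the convex functions $\psi_n$ are locally uniformly Lipschitz on $I$, and since $\psi_n(x_0)=0$, also locally uniformly bounded. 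By Helly's selection theorem and a diagonal argument, a subsequence $\psi_n\to\psi$ pointwise on $I$ (uniformly on compacts) for a convex $\psi\geq 0$; the boundary-term bound together with $\psi_n(b)\geq|\Delta\psi_n(b)|$ lets us pass to a further subsequence so that $\psi_n(b)\to\psi(b)\in[0,\infty]$ for each $b\in J\setminus I$. A parallel compactness argument, now using the lower bound $\varphi_n\geq -\psi_n$ (locally uniformly bounded below on compacts of $I$ by Step~2) and the upper bound $\mu(\varphi_n)\leq C$ (from the hypothesis plus $\nu(\psi_n)\geq 0$), extracts a further subsequence with $\varphi_n\to\varphi$ pointwise on $I$ for some concave $\varphi$; extension to $J$ is handled by the analogous boundary estimate applied to $\varphi_n$ (after further extraction). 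Taking pointwise limits in $\varphi_n+\psi_n\geq f_n$ together with $f_n\to f$ yields $\varphi+\psi\geq f$ on $J$. For the cost estimate, Fatou's lemma applied to the nonnegative sequence $\varphi_n+\psi_n$ gives $\mu(\varphi+\psi)\leq \liminf_n \mu(\varphi_n+\psi_n)$; the vague lower semicontinuity of $\psi\mapsto\int_I(u_\nu-u_\mu)\dd\psi''$ along pointwise-convergent convex $\psi_n$ (standard for second-derivative measures of convex functions) and Fatou on the counting-type boundary integral combine to give the desired $\mu(\varphi)+\nu(\psi)\leq\liminf_n\{\mu(\varphi_n)+\nu(\psi_n)\}$; in particular $(\varphi,\psi)\in L^c(\mu,\nu)$ and thus in $\wt\cD_{\mu,\nu}(f)$.

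\textbf{Main obstacle.} The technical heart of the argument is coordinating the two compactness extractions with the Fatou-type lower semicontinuity of the generalized integral. Specifically, controlling the behavior of $\psi_n$ (and $\varphi_n$) at the boundary points of $J\setminus I$---which can be atoms of $\nu$ and carry a nontrivial $|\Delta\psi_n|$-contribution---requires a separate uniform bound coming from the boundary term in the decomposition above, and one must ensure that the pointwise limit produces a valid jump-at-boundary value $\Delta\psi(b)$ that is itself lower semicontinuous along the subsequence. This boundary analysis is the aspect that goes beyond the irreducible one-marginal-pair argument of \cite{BeiglbockNutzTouzi2017} in spirit, though the mechanics are parallel.
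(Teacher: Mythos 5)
Your route is genuinely different from the paper's: the paper does not redo the compactness by hand, but observes that with $h_n=\varphi_n'$ a superderivative, $\varphi_n(x)+\psi_n(y)+h_n(x)(y-x)\geq \varphi_n(y)+\psi_n(y)\geq f_n(y)\geq 0$, so that $(\varphi_n,\psi_n,h_n)$ lies in the dual cone $\cD^c_{\mu,\nu}(0)$ of \cite{BeiglbockNutzTouzi2017}; it then imports the Komlos-based closedness of \cite[Proposition~5.2]{BeiglbockNutzTouzi2017} (convex combinations preserve concavity/convexity), obtaining $\mu$-a.e.\ and $\nu$-a.e.\ limits $(\bar\varphi,\bar\psi)\in L^c(\mu,\nu)$ with the Fatou cost bound, and finally sets $\varphi:=\liminf\varphi_n$, $\psi:=\limsup\psi_n$ to get the constraint everywhere on $J$. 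Your direct approach (affine normalization, decomposition with moderator $-\psi_n$, Helly on compacts of $I$) is a legitimate alternative in spirit, and the normalization and the uniform bounds on the three nonnegative terms are correct.

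However, there is a genuine gap at your lower semicontinuity step. You pass to the limit term by term, invoking vague lower semicontinuity for $\frac12\int_I(u_\nu-u_\mu)\dd\psi_n''$ and ``Fatou on the boundary integral'' for $\int_{J\setminus I}|\Delta\psi_n|\dd(\nu-\mu)$. The boundary term is \emph{not} lower semicontinuous along pointwise convergence: jump mass at an endpoint atom of $\nu$ can be created in the limit from curvature concentrating near the endpoint. For instance, with left endpoint $b\in J\setminus I$, take $\psi_n(x)=(1-n(x-b))^+$; then $\Delta\psi_n(b)=0$ for every $n$, while the pointwise limit satisfies $\psi\equiv 0$ on $I$, $\psi(b)=1$, so $|\Delta\psi(b)|=1>\liminf_n|\Delta\psi_n(b)|=0$. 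In this situation the ``missing'' contribution is carried by $\int_I(u_\nu-u_\mu)\dd\psi_n''$ (mass $n$ at $b+\tfrac1n$ against a weight $\approx 2\nu(\{b\})/n$), so only the \emph{combined} functional $(\mu-\nu)(-\psi)$ has a chance of being lower semicontinuous, and proving that requires exactly the coordination you defer to the ``main obstacle'' (e.g.\ via the disintegration representation of Lemma~\ref{lem:concave integral:disintegration} and Fatou, which is in substance the \cite{BeiglbockNutzTouzi2017} argument the paper reuses). Two further points need repair: you allow $\psi(b)\in[0,\infty]$ at endpoint atoms, but $\psi(b)=\infty$ would contradict $(\varphi,\psi)\in L^c(\mu,\nu)$ (Remark~\ref{rem:psi finite}), so finiteness at $b$ must be extracted from the uniform bounds rather than postulated; and the compactness for $\varphi_n$ is under-justified, since the bound $\mu(\varphi_n)\leq C$ requires splitting the generalized integral (problematic when $\nu(\psi_n)=\infty$), and in any case an $L^1(\mu)$-bound alone does not give local uniform upper bounds for $\varphi_n$ on all of $I$ without a concavity-spreading argument through points charged by $\mu$.
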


\begin{proof}
Let $h_n = \varphi_n':I\to\RR$ be a superderivative of the concave function $\varphi_n$. As
\begin{align*}
\varphi_n(x) + \psi_n(y) + h_n(x)(y-x)
&\geq \varphi_n(y) + \psi_n(y)
\geq f_n(y) \geq 0, \quad (x,y) \in I\times J,
\end{align*}
$(\varphi_n,\psi_n,h_n)$ is in the dual space $\cD^c_{\mu,\nu}(0)$ of \cite{BeiglbockNutzTouzi2017}. Hence, following the line of reasoning in the proof of \cite[Proposition~5.2]{BeiglbockNutzTouzi2017} (which is based on Komlos' lemma; we recall that convex combinations of convex (concave) functions are again convex (concave)), we may assume without loss of generality that 
\begin{align*}
\varphi_n \to \bar\varphi\quad\mu\text{-a.e.}
\quad\text{and}\quad
\psi_n \to \bar\psi\quad\nu\text{-a.e.}
\end{align*}
for some $(\bar\varphi,\bar\psi)\in L^c(\mu,\nu)$. Moreover, the arguments in \cite{BeiglbockNutzTouzi2017} also show that $\mu(\bar\varphi) + \nu(\bar\psi) \leq \liminf_{n \to \infty} \lbrace \mu(\varphi_n) + \nu(\psi_n) \rbrace$.

Now, define the functions $\varphi,\psi:J\to\ol\RR$ by $\varphi := \liminf_{n\to\infty} \varphi_n$ and $\psi := \limsup_{n\to\infty} \psi_n$. Then $\varphi$ is convex, $\psi$ is concave, $\varphi = \bar\varphi$ $\mu$-a.e., and $\psi = \bar\psi$ $\nu$-a.e. In particular, $(\varphi,\psi) \in L^c(\mu,\nu)$ and $\mu(\varphi) + \nu(\psi) \leq \liminf_{n \to \infty} \lbrace \mu(\varphi_n) + \nu(\psi_n) \rbrace$. Furthermore, as $\varphi_k + \psi_k \geq f_k$ on $J$, we have for each $n$ that
\begin{align*}
\inf_{k \geq n} \varphi_k + \sup_{k \geq n} \psi_k
&\geq \inf_{k \geq n} (\varphi_k + \psi_k)
\geq \inf_{k \geq n} f_k\quad\text{on } J.
\end{align*}
Sending $n\to\infty$ gives $\varphi + \psi \geq f$. In summary, $(\varphi,\psi) \in \wt\cD_{\mu,\nu}(f)$.
\end{proof}

We proceed to show strong duality for bounded upper semicontinuous functions. 

\begin{lemma}
\label{lem:ContinuousDuality}
Let $f: \RR \to [0,\infty]$ be bounded and upper semicontinuous. Then $\wt\bfS_{\mu,\nu}(f) = \wt\bfI_{\mu,\nu}(f)$.
\end{lemma}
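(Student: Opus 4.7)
Weak duality $\wt\bfS_{\mu,\nu}(f) \leq \wt\bfI_{\mu,\nu}(f)$ is immediate: for $(\varphi,\psi) \in \wt\cD_{\mu,\nu}(f)$ and $\mu \leq_c \theta \leq_c \nu$, concavity of $\varphi$ gives $\theta(\varphi) \leq \mu(\varphi)$ and convexity of $\psi$ gives $\theta(\psi) \leq \nu(\psi)$, so
\begin{equation*}
\theta(f) \leq \theta(\varphi) + \theta(\psi) \leq \mu(\varphi) + \nu(\psi).
\end{equation*}
Hence I focus on the reverse inequality $\wt\bfI_{\mu,\nu}(f) \leq \wt\bfS_{\mu,\nu}(f)$.

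The strategy is a minimax / linear programming duality argument motivated by the formal derivation in Section~\ref{sec:motivation}. Consider the Lagrangian
\begin{equation*}
L(\theta,\varphi,\psi) := \theta(f-\varphi-\psi) + \mu(\varphi) + \nu(\psi),
\end{equation*}
where $\theta$ ranges over the set $\Theta_{\mu,\nu} := \{\theta : \mu \leq_c \theta \leq_c \nu\}$ and $(\varphi,\psi)$ ranges over concave/convex pairs. For any $\theta \in \Theta_{\mu,\nu}$, the inequalities $\theta(\varphi) \leq \mu(\varphi)$ and $\theta(\psi) \leq \nu(\psi)$ show $L(\theta,\varphi,\psi) \geq \theta(f)$ with equality at $\varphi = \psi = 0$, so $\sup_\theta \inf_{\varphi,\psi} L = \wt\bfS_{\mu,\nu}(f)$. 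On the other hand, for fixed $(\varphi,\psi)$ with $\varphi + \psi \geq f$, taking $\sup_\theta L$ over positive measures of the correct total mass and barycenter penalises any deviation from $\varphi+\psi \geq f$; this yields $\inf_{\varphi,\psi} \sup_\theta L = \wt\bfI_{\mu,\nu}(f)$. The bulk of the work is justifying the exchange of $\sup$ and $\inf$ for bounded upper semicontinuous $f$.

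To make this precise, I would first reduce by a standard tightening to the case where we work on a compact set on which $\mu$ and $\nu$ are concentrated up to $\varepsilon$-mass, and approximate the bounded USC $f$ from above by a decreasing sequence of bounded continuous functions $f_n \downarrow f$. For bounded continuous $f_n$ one applies Sion's minimax theorem: $\Theta_{\mu,\nu}$ is convex and weakly compact (tightness follows since every $\theta \leq_c \nu$ is controlled by the second moment of $\nu$, and the convex-order inequalities are closed under weak convergence by Proposition~\ref{prop:Strassen}); the dual variables can be restricted to a family of uniformly Lipschitz concave/convex functions bounded on a compact subset of $J$, which forms a convex set on which $L$ is linear in each argument and upper semicontinuous in $\theta$. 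Sion's theorem then gives $\wt\bfS_{\mu,\nu}(f_n) = \wt\bfI_{\mu,\nu}(f_n)$. Finally, one sends $n \to \infty$: the primal values converge, $\wt\bfS_{\mu,\nu}(f_n) \downarrow \wt\bfS_{\mu,\nu}(f)$, by the Portmanteau theorem together with attainment on the weakly compact set $\Theta_{\mu,\nu}$, and on the dual side $\mu(\varphi_n) + \nu(\psi_n) = \wt\bfI_{\mu,\nu}(f_n)$ is bounded, so Proposition~\ref{prop:DualClosedness} produces a limit pair $(\varphi,\psi) \in \wt\cD_{\mu,\nu}(f)$ with $\mu(\varphi) + \nu(\psi) \leq \lim \wt\bfS_{\mu,\nu}(f_n) = \wt\bfS_{\mu,\nu}(f)$, completing the proof.

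\textbf{Main obstacle.} The delicate point is the minimax step, which requires compactification of the dual variables: concave $\varphi$ and convex $\psi$ in $\wt\cD_{\mu,\nu}(f)$ may blow up near the endpoints of $J$ (they merely lie in the generalised integral space $L^c(\mu,\nu)$), so one cannot a priori apply Sion's theorem to the natural dual space. The workaround is to localise via a compact exhaustion of $I$, truncate $\varphi,\psi$ to bounded concave/convex functions satisfying the constraint on the compact piece, apply minimax there, and then remove the truncation using the closedness property of Proposition~\ref{prop:DualClosedness}. Equivalently, one can detour through the three-marginal discrete MOT formulation $\wt\bfS_{\mu,\nu}(f) = \sup_{Q \in \bigcup_\theta \cM^d(\mu,\theta,\nu)} Q[f(Y_2)]$ suggested by Proposition~\ref{prop:Strassen}, invoke an existing multi-marginal MOT duality, and reduce the resulting four-tuple $(\varphi_1,\varphi_3,h_1,h_2)$ to a concave/convex pair by taking the concave envelope $\varphi(y) := \inf_x [\varphi_1(x) + h_1(x)(y-x)]$ and constructing $\psi$ analogously---here the obstacle shifts to showing that $\varphi + \psi \geq f$ by carefully exploiting the dual inequality along the diagonals $\{x = y\}$ and $\{y = z\}$ while keeping $\mu(\varphi) + \nu(\psi) \leq \mu(\varphi_1) + \nu(\varphi_3)$.
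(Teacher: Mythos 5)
Your overall architecture for the second half (prove the duality for continuous payoffs first, then pass to bounded upper semicontinuous $f$ via $f_n \searrow f$, continuity of $\wt\bfS_{\mu,\nu}$ along decreasing sequences on the weakly compact set $\lbrace \theta:\mu\leq_c\theta\leq_c\nu\rbrace$, and Proposition~\ref{prop:DualClosedness}) matches the paper's, and that approximation step is sound. The core step for continuous $f$, however, contains a genuine gap. The paper does not use a minimax theorem there: it runs a Hahn--Banach separation in a weighted space $C_\zeta$ (with $\zeta$ obtained from de la Vall\'ee--Poussin applied to $\nu$), separating $f$ from the convex cone $K=\lbrace g\in C_\zeta: \wt\bfI_{\mu,\nu}(g)\leq 0\rbrace$, whose closedness is exactly what Proposition~\ref{prop:DualClosedness} supplies; the separating functional is then shown to be a nonnegative measure $\rho$ with $\mu\leq_c\rho\leq_c\nu$, contradicting $\wt\bfS_{\mu,\nu}(f)=0$. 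That closedness of $K$ is the functional-analytic substitute for the compactness your route requires, and your plan has no counterpart for it.

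Concretely, two things break in your minimax step as set up. First, the identification $\inf_{\varphi,\psi}\sup_\theta L=\wt\bfI_{\mu,\nu}(f)$ is unjustified when $\theta$ ranges over $\lbrace \theta:\mu\leq_c\theta\leq_c\nu\rbrace$ or over measures of fixed total mass and barycenter: the penalty for violating $\varphi+\psi\geq f$ is then finite (the mass is capped), and with $\varphi=\psi=0$ one already gets $\sup_\theta L=\wt\bfS_{\mu,\nu}(f)$, so the minimax equality by itself only reproduces $\wt\bfS_{\mu,\nu}(f)$ and says nothing about $\wt\bfI_{\mu,\nu}(f)$ unless you additionally show $\wt\bfI_{\mu,\nu}(f)\leq\inf\sup$, e.g.\ by correcting a near-optimal Lagrangian pair with an affine majorant of $f-\varphi-\psi$; this step is absent and itself needs the inner supremum to see all of $J$. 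Second, if you enlarge the $\theta$-class (all finite measures, as in the formal derivation of Section~\ref{sec:motivation}) so that the pointwise constraint is genuinely enforced, weak compactness on the $\theta$ side is lost, and compactness must come from the dual variables --- exactly the obstacle you yourself flag; your two proposed workarounds (truncating $\mu,\nu$ to compact sets with a compact exhaustion, or detouring through a multi-marginal MOT duality and reducing the four-tuple to a concave/convex pair by envelopes) are one-sentence sketches, and the first alters the convex-order and irreducibility structure of the marginals without any error analysis, while the second's reduction is essentially the content that needs proof. A minor further point: in your weak-duality half, $\mu(\varphi)+\nu(\psi)$ is the generalized integral of Definition~\ref{def:BNT integral} and the individual integrals may be infinite, so the chain $\theta(f)\leq\theta(\varphi)+\theta(\psi)\leq\mu(\varphi)+\nu(\psi)$ requires the calculus of Lemma~\ref{lem:ConvexIntegrationRules}, not just the classical convex-order inequalities.
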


The proof is based on a Hahn--Banach separation argument similar to \cite[Lemma~6.4]{BeiglbockNutzTouzi2017}.

\begin{proof}
We first show the weak duality inequality. Let $\mu\leq_c\theta\leq_c\nu$ and $(\varphi,\psi)\in\wt\cD_{\mu,\nu}(f)$. In particular, $\varphi+\psi$ is bounded from below. Then by Lemma~\ref{lem:ConvexIntegrationRules}~(iii)--(iv), 
\begin{align}
\label{eqn:lem:ContinuousDuality:pf:weak duality}
\theta(f)
&\leq \theta(\varphi+\psi)
= \theta(\varphi) + \theta(\psi)
\leq \theta(\varphi) + \nu(\psi)
\leq \mu(\varphi) + \nu(\psi),
\end{align}
and the inequality $\wt\bfS_{\mu,\nu}(f) \leq \wt\bfI_{\mu,\nu}(f)$ follows.

The converse inequality is based on a Hahn--Banach argument, so let us introduce a suitable space. By the de la Vall\'ee--Poussin theorem, there is an increasing convex function $\zeta_\nu:\RR_+\to\RR_+$ of superlinear growth such that $x \mapsto \zeta_\nu(|x|)$ is $\nu$-integrable. Now, set $\zeta(x) = 1+\zeta_\nu(|x|)$, $x \in \RR$, and denote by $C_\zeta$ the space of all continuous functions $f:\RR\to\RR$ such that $f/\zeta$ vanishes at infinity. We endow $C_\zeta$ with the norm $\|f\|_\zeta := \|f/\zeta\|_\infty$. With this notation, the same arguments as in the proof of \cite[Lemma~6.4]{BeiglbockNutzTouzi2017} show that the dual space $C^*_\zeta$ of continuous linear functionals on $C_\zeta$ can be represented by finite signed measures.

Fix $f \in C_\zeta$. Then
\begin{align}
\label{eqn:lem:ContinuousDuality:pf:f}
-\zeta(x)\| f \|_\zeta 
&\leq f(x)
\leq \zeta(x)\| f \|_\zeta,\quad x \in J.
\end{align}
Because $\zeta_\nu$ is convex and $x \mapsto \zeta_\nu(|x|)$ is $\nu$-integrable, we have $\theta(\zeta) \leq \nu(\zeta) < \infty$ for all $\mu\leq_c\theta\leq_c\nu$. This together with \eqref{eqn:lem:ContinuousDuality:pf:f} shows that $\wt\bfS_{\mu,\nu}(f)$ is finite. Thus, adding a suitable constant to $f$, we may assume that $\wt\bfS_{\mu,\nu}(f) = 0$. For the following Hahn--Banach argument, we consider the convex cone
\begin{align*}
K
&:= \{g \in C_\zeta : \wt\bfI_{\mu,\nu}(g) \leq 0\}.
\end{align*}
Proposition~\ref{prop:DualClosedness} implies that $K$ is closed.

Suppose for the sake of contradiction that $\wt\bfI_{\mu,\nu}(f) > 0$. Then, by the Hahn--Banach theorem, $K$ and $f$ can be strictly separated by a continuous linear functional on $C_\zeta$. That is, there is a finite signed measure $\rho$ such that $\rho(f) > 0$ and $\rho(g) \leq 0$ for all $g\in K$. For any compactly supported nonnegative continuous function $g\in C_\zeta$, we have $\wt\bfI_{\mu,\nu}(-g) \leq 0$. That is, $-g \in K$ and hence $\rho(-g) \leq 0$. This shows that $\rho$ is a (nonnegative) finite measure. Multiplying $\rho$ by a positive constant if necessary, we may assume that $\rho$ has the same mass as $\mu$ and $\nu$. Next, let $\psi$ be convex and of linear growth. Then $\psi \nu(\RR) - \nu(\psi)\in K$ and $-\psi \mu(\RR) + \mu(\psi) \in K$. Using that $\rho\leq0$ for these two functions yields $\mu(\psi) \leq \rho(\psi) \leq \nu(\psi)$. We conclude that $\mu\leq_c\rho\leq_c\nu$. But now $\rho(f) > 0$ contradicts $\wt\bfS_{\mu,\nu}(f) = 0$. Thus, $\wt\bfI_{\mu,\nu}(f) \leq \wt\bfS_{\mu,\nu}(f)$.

Finally, let $f$ be bounded and upper semicontinuous and choose $f_n \in C_b(\RR) \subseteq C_\zeta$ such that $f_n \searrow f$. By the above, we have $\wt\bfS_{\mu,\nu}(f_n) = \wt\bfI_{\mu,\nu}(f_n)$ for all $n$. We show below that $\lim_{n\to\infty}\wt\bfS_{\mu,\nu}(f_n) = \wt\bfS_{\mu,\nu}(f)$. Using this and the monotonicity of $\wt\bfI_{\mu,\nu}$, we obtain
\begin{align*}
\wt\bfI_{\mu,\nu}(f)
&\leq \lim_{n\to\infty} \wt\bfI_{\mu,\nu}(f_n)
= \lim_{n\to\infty} \wt\bfS_{\mu,\nu}(f_n)
= \wt\bfS_{\mu,\nu}(f)
\leq \wt\bfI_{\mu,\nu}(f).
\end{align*}
So strong duality holds for bounded upper semicontinuous functions.

It remains to argue that $\lim_{n\to\infty}\wt\bfS_{\mu,\nu}(f_n) = \wt\bfS_{\mu,\nu}(f)$. We show more generally that $\wt\bfS_{\mu,\nu}$ is continuous along decreasing sequences of bounded upper semicontinuous functions. So let $f_n \searrow f$ be a convergent sequence of bounded upper semicontinuous functions. Fix $\varepsilon > 0$ and set $\ell := \lim_{n\to\infty} \wt\bfS_{\mu,\nu}(f_n)$. Then for each $n$, $\ell\leq \bfS_{\mu,\nu}(f_n) < \infty$ and thus the set
\begin{align*}
A_n
&:= \{\mu \leq_c \theta \leq_c \nu : \theta(f_n) \geq \ell - \varepsilon\}
\end{align*}
is nonempty. Moreover, each $A_n$ is a closed subset of the weakly compact set $\lbrace \theta : \mu \leq_c \theta \leq_c \nu \rbrace$ and $A_{n+1} \subseteq A_n$. Therefore, there exists a $\theta'$ in the intersection $\cap_{n \geq 1} A_n$. We then obtain by monotone convergence that
\begin{align*}
\wt\bfS_{\mu,\nu}(f)
&\geq \theta'(f)
= \lim_{n\to\infty} \theta'(f_n) \geq \ell - \varepsilon.
\end{align*}
This implies that $\wt\bfS_{\mu,\nu}(f) \geq \ell$ as $\varepsilon$ was arbitrary. The converse inequality follows from the monotonicity of $\wt\bfS_{\mu,\nu}$. This completes the proof.
\end{proof}

\begin{proof}[Proof of Theorem \ref{thm:StrongAuxiliaryDuality}]
(i): This is a consequence of Lemma~\ref{lem:ContinuousDuality} and a capacitability argument that is almost verbatim to \cite[Section~6]{BeiglbockNutzTouzi2017}. The same arguments can be found in \cite{Kellerer84}. We therefore omit these elaborations.

(ii): Applying Proposition~\ref{prop:DualClosedness} to the constant sequence $f_n = f$ and a minimizing sequence $(\varphi_n,\psi_n) \in \wt\cD_{\mu,\nu}(f)$ of $\wt\bfI_{\mu,\nu}(f)$ yields a dual minimizer.
\end{proof}

We are now in a position to prove the duality between the robust pricing and superhedging problems.

\begin{proof}[Proof of Theorem \ref{thm:StrongDuality}]
By Proposition~\ref{prop:reduction:primal}, Lemma~\ref{lem:weak duality}, and Proposition~\ref{prop:reduction:dual},
\begin{align*}
\wt\bfS_{\mu,\nu}(f)
&\leq \bfS_{\mu,\nu}(f,\cA)
\leq \bfI_{\mu,\nu}(f,\cA)
\leq \wt\bfI_{\mu,\nu}(f),
\end{align*}
and Theorem~\ref{thm:StrongAuxiliaryDuality} shows that $\wt\bfS_{\mu,\nu}(f) = \wt\bfI_{\mu,\nu}(f)$. Hence,
\begin{align}
\label{eqn:thm:StrongDuality:pf:equality}
\wt\bfS_{\mu,\nu}(f)
&= \bfS_{\mu,\nu}(f,\cA)
= \bfI_{\mu,\nu}(f,\cA)
= \wt\bfI_{\mu,\nu}(f).
\end{align}
In particular, the quantities in \eqref{eqn:thm:StrongDuality:pf:equality} are all independent of the choice of $\cA$ (as long as one of the two conditions of Theorem~\ref{thm:StrongDuality} holds).

If $\bfI_{\mu,\nu}(f,\cA) < \infty$, then $\wt\bfI_{\mu,\nu}(f) < \infty$ and hence there is an optimizer $(\varphi,\psi) \in \wt\cD_{\mu,\nu}(f)$ for $\wt\bfI_{\mu,\nu}(f)$. Then Proposition~\ref{prop:dynamic part} provides an $H = (h,A)$ such that $(\varphi,\psi,H) \in \cD_{\mu,\nu}(f,\cA)$. By \eqref{eqn:thm:StrongDuality:pf:equality} and the definition of $\bfI_{\mu,\nu}(f,\cA)$, $(\varphi,\psi,H)$ is an optimizer for $\bfI_{\mu,\nu}(f,\cA)$.
\end{proof}

\begin{remark}
\label{rem:strong duality with FV strategies}
Strong duality (without dual attainment) for the robust pricing and superhedging problems continues to hold if we restrict ourselves to trading strategies whose dynamic part is of finite variation.

First, observe that the process $\hat H$ defined by \eqref{eqn:integral:integrand} is of finite variation when $h$ is bounded. Recalling the definition \eqref{eqn:prop:dynamic part:h} of $h$ in Proposition~\ref{prop:dynamic part}, we see that $h$ is bounded on $\{\omega : \omega_T \in J^\circ\}$ as these paths are bounded in a compact subset of $J$, on which $\psi'$ is bounded. This will more generally hold for almost all paths if $\psi'$ is uniformly bounded on $J$. Therefore, strong duality (and dual attainment in strategies of finite variation) holds if $J$ is open.

Second, consider the case $J = [a,b)$ for some $-\infty < a < b \leq \infty$. Suppose that the assumptions of Theorem~\ref{thm:StrongDuality} hold and that $\bfI_{\mu,\nu}(f,\cA) < \infty$, and let $(\varphi,\psi) \in \wt\cD_{\mu,\nu}(f)$ be a dual auxiliary optimizer. Then $\psi(a) < \infty$ as $\nu$ has an atom at $a$ (cf.~Lemma~\ref{lem:Lc:finite}). If $\psi'(a) > -\infty$, then the same argument as above shows that the dynamic trading strategy constructed in Proposition~\ref{prop:dynamic part} is of finite variation. If $\psi'(a) = -\infty$, then we construct a sequence of functions
\begin{align*}
\psi_k(x)
:= \begin{cases}
\psi(x) & \text{for } x \geq a+\tfrac{1}{k}, \\
\psi(a) + k(x-a) (\psi(a+\tfrac{1}{k}) - \psi(a)) & \text{for } x < a+\tfrac{1}{k}.
\end{cases}
\end{align*}
that approximates $\psi$ by linear interpolation on the interval $[a,a+\frac{1}{k}]$. We then have $\psi_k \searrow \psi$ and $\mu(\varphi) + \nu(\psi_k) \searrow \mu(\varphi) + \nu(\psi) = \wt\bfI_{\mu,\nu}(f)$ as $k\to \infty$. Since $\psi'_k(a) > -\infty$, the associated process $H^{(k)}$ is of finite variation almost surely. The cases $J = (a,b]$ and $J = [a,b]$ are analogous.
\end{remark}

\subsection{Structure of primal and dual optimizers}
\label{sec:structure}

If a primal optimizer to the auxiliary problem exists, we can derive some necessary properties for the dual optimizer.

\begin{proposition}
\label{prop:DualProperties}
Let $\mu \leq_c \nu$ be irreducible with domain $(I,J)$ and let $f: \RR \to [0,\infty]$ be Borel. Suppose that $\wt\bfS_{\mu,\nu}(f) = \wt\bfI_{\mu,\nu}(f)$, that $\mu\leq_c\theta\leq_c\nu$ is an optimizer for $\wt\bfS_{\mu,\nu}(f)$, and that $(\varphi,\psi)\in\wt\cD_{\mu,\nu}(f)$ is an optimizer for $\wt\bfI_{\mu,\nu}(f)$. Then
\begin{enumerate}
\item $\varphi+\psi = f$ $\theta$-a.e.,
\item $\varphi$ is affine on the connected components of $\{u_\mu < u_\theta\}$,
\item $\psi$ is affine on the connected components of $\{u_\theta < u_\nu\}$,
\item $\varphi$ does not have a jump at a finite endpoint $b$ of $J$ if $\theta(\{b\}) > 0$, and
\item $\psi$ does not have a jump at a finite endpoint $b$ of $J$ if $\theta(\{b\}) < \nu(\{b\})$.
\end{enumerate}
\end{proposition}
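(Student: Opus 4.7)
The plan is to exploit the observation that joint optimality of $\theta$ and $(\varphi,\psi)$ together with strong duality $\wt\bfS_{\mu,\nu}(f) = \wt\bfI_{\mu,\nu}(f)$ collapses the weak-duality chain from \eqref{eqn:lem:ContinuousDuality:pf:weak duality} into a chain of equalities:
\begin{align*}
\theta(f)
= \theta(\varphi + \psi)
= \theta(\varphi) + \theta(\psi)
= \theta(\varphi) + \nu(\psi)
= \mu(\varphi) + \nu(\psi).
\end{align*}
Each of (i)--(v) will then be extracted from exactly one of these equalities.

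For (i), I would first observe that $\theta$ is concentrated on $J$: by Proposition~\ref{prop:Strassen} there is a martingale kernel from $\theta$ to $\nu$, and such a kernel must leave any point lying at an endpoint of the convex hull of $\supp\nu$ fixed, so any $\theta$-atom at such a point forces a $\nu$-atom of at least the same size there and thus places it in $J$. Combining $\theta(\varphi + \psi) = \theta(f)$ with the pointwise inequality $\varphi + \psi \geq f$ on $J$ then immediately gives $\varphi + \psi = f$ $\theta$-a.e.

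For (ii)--(v), I would rewrite $\theta(\varphi) = \mu(\varphi)$ as $(\mu - \theta)(\varphi) = 0$ and, using that $-\psi$ is concave, $\theta(\psi) = \nu(\psi)$ as $(\theta - \nu)(-\psi) = 0$, and then expand both via Definition~\ref{def:concave integral}:
\begin{align*}
0 &= \tfrac{1}{2}\int_I (u_\mu - u_\theta) \dd \varphi'' + \int_{J \setminus I} |\Delta \varphi| \dd (\theta - \mu), \\
0 &= \tfrac{1}{2}\int_I (u_\nu - u_\theta) \dd \psi'' + \int_{J \setminus I} |\Delta \psi| \dd (\nu - \theta).
\end{align*}
In each identity both summands are nonnegative: writing $\varphi'' = -\lambda$ for a nonnegative Radon measure $\lambda$ (by concavity of $\varphi$), the interior integral equals $\tfrac{1}{2}\int_I (u_\theta - u_\mu) \dd \lambda \geq 0$ by $\mu \leq_c \theta$, while the boundary sum is nonnegative because $\theta - \mu \geq 0$ on $J \setminus I$ (recall $\mu(J \setminus I) = 0$); the second identity is symmetric, using $\psi'' \geq 0$ by convexity and $\nu - \theta \geq 0$ on $J \setminus I$ by $\theta \leq_c \nu$. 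Individual vanishing of the interior integrals forces $\supp \lambda \subseteq \{u_\mu = u_\theta\}$ and $\supp \psi'' \subseteq \{u_\theta = u_\nu\}$, which is exactly (ii) and (iii); vanishing of the boundary sums gives $|\Delta \varphi(b)|\,\theta(\{b\}) = 0$ and $|\Delta \psi(b)|\,(\nu(\{b\}) - \theta(\{b\})) = 0$ for $b \in J \setminus I$, which is (iv) and (v).

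The main obstacle is careful sign-bookkeeping around the second-derivative measures (concavity versus convexity) and verifying that the generalized-integral splitting $\theta(\varphi + \psi) = \theta(\varphi) + \theta(\psi)$ from Lemma~\ref{lem:ConvexIntegrationRules}(iii)--(iv) genuinely applies to our $(\varphi,\psi)$---in particular that $\varphi + \psi \geq f \geq 0$ on $J$ provides the required concave $\theta$-integrable lower bound. Once this is in place the proposition reduces to reading off, in each case, where a nonnegative quantity is forced to vanish.
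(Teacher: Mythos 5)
Your proposal is correct and follows essentially the same route as the paper: collapse the weak-duality chain \eqref{eqn:lem:ContinuousDuality:pf:weak duality} into equalities, read off (i) from $\theta(f)=\theta(\varphi+\psi)$ together with $\varphi+\psi\geq f$ on $J$, and obtain (ii)--(v) from the vanishing of the nonnegative quantities $(\mu-\theta)(\varphi)$ and $(\theta-\nu)(-\psi)$ expanded via Definition~\ref{def:concave integral}. The only real difference is bookkeeping: since $\theta(\varphi)$, $\mu(\varphi)$, $\nu(\psi)$ need not be individually finite, your ``cancellation'' steps $\theta(\varphi)=\mu(\varphi)$ and $\theta(\psi)=\nu(\psi)$ must be (and can be) justified exactly as the paper does, by rewriting the generalized pairings with the concave moderators $\varphi$ and $-\psi$ and deducing $(\mu-\theta)(\varphi)+(\theta-\nu)(-\psi)=0$, which you essentially acknowledge in your closing paragraph.
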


\begin{proof}
As in the proof of Lemma~\ref{lem:ContinuousDuality}, we obtain (cf.~\eqref{eqn:lem:ContinuousDuality:pf:weak duality}) that
\begin{align*}
\theta(f)
&\leq \theta(\varphi+\psi)
\leq \mu(\varphi) + \nu(\psi).
\end{align*}
By the absence of a duality gap as well as the optimality of $\theta$ and $(\varphi,\psi)$, all inequalities are equalities:
\begin{align}
\label{eqn:prop:DualProperties:pf:equality}
\theta(f)
&= \theta(\varphi+\psi)
= \mu(\varphi) + \nu(\psi).
\end{align}

Now (i) follows from the first equality in \eqref{eqn:prop:DualProperties:pf:equality} and the fact that $\varphi+\psi\geq f$ on $J$. Rearranging the second equality, we can write
\begin{align*}
0
&= \{\mu(\varphi) + \nu(\psi) \} - \theta(\varphi+\psi) \\
&= \{\mu(\varphi) + \nu(\psi) \} - \{\theta(\varphi) + \nu(\psi)\} + \{\theta(\varphi) + \nu(\psi)\}- \theta(\varphi+\psi).
\end{align*}
Using the definition \eqref{eqn:def:BNT integral} of the first three expressions (using $\varphi$ as a concave moderator for the first two terms and $-\psi$ for the third; cf.~Lemma~\ref{lem:ConvexIntegrationRules}~(i)), we obtain
\begin{align}
\label{eqn:prop:DualProperties:pf:concave integrals}
0
&= (\mu - \nu)(\varphi) - (\theta - \nu)(\varphi) + (\theta - \nu)(-\psi) \notag\\
&= (\mu - \theta)(\varphi)+(\theta-\nu)(-\psi),
\end{align}
where the last equality is a direct consequence of the definitions of $(\mu - \nu)(\varphi)$ and $(\theta - \nu)(\varphi)$ (cf.~\eqref{eqn:def:concave integral}). Both terms on the right-hand side of \eqref{eqn:prop:DualProperties:pf:concave integrals} are nonnegative by definition and hence must vanish:
\begin{align*}
0
&= (\mu - \theta)(\varphi)
= \int_I (u_\mu - u_\theta) \dd\varphi'' + \int_{J \setminus I} |\Delta \varphi| \dd\theta
\end{align*}
and similarly for $(\theta-\nu)(-\psi)$. This implies that $\varphi'' = 0$ on $\{u_\mu < u_\theta\}$ (which is assertion (ii)) and that $|\Delta \varphi| = 0$ for every endpoint of $J$ on which $\theta$ has an atom (which is assertion (iv)). The proofs of (iii) and (v) are similar.
\end{proof}

The next result shows that for upper semicontinuous $f$, there is a maximizer for $\wt\bfS_{\mu,\nu}(f)$ which is maximal with respect to the convex order. We omit the proof in the interest of brevity.

\begin{proposition}
\label{prop:SecondaryOptimizer}
Let $\mu \leq_c \nu$ be irreducible and let $f:\RR \to [0,\infty]$ be upper semicontinuous and bounded from above by a convex, continuous, and $\nu$-integrable function. Furthermore, fix a strictly convex function $g:\RR \to \RR$ with linear growth, and consider the ``secondary'' optimization problem
\begin{equation}
\label{eqn:prop:SecondaryOptimizer}
\sup_{\theta \in \Theta(f)} \theta(g),
\end{equation}
where $\Theta(f) := \lbrace \theta : \mu \leq_c \theta \leq_c \nu \;\text{ and }\; \theta(f) \geq \wt\bfS_{\mu,\nu}(f) \rbrace$ is the set of optimizers of the auxiliary primal problem.
\begin{enumerate}
\item $\Theta(f)$ is non-empty, convex, and weakly compact and \eqref{eqn:prop:SecondaryOptimizer} admits an optimizer.
\item Any optimizer $\theta$ of \eqref{eqn:prop:SecondaryOptimizer} has the following properties:
\begin{itemize}
\item $\theta$ is maximal in $\Theta(f)$ with respect to the convex order.
\item If $O$ is an open interval such that $O \subseteq \lbrace u_\theta < u_\nu \rbrace$ and $f\vert_O$ is convex, then $\theta(O) = 0$.
\item If $K$ is an interval such that $K^\circ \subseteq \lbrace u_\mu < u_\theta \rbrace$, $f\vert_K$ is strictly concave, and $\theta(K)>0$, then $\theta\vert_K$ is concentrated in a single atom.
\end{itemize}
\end{enumerate}
\end{proposition}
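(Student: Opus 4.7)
For (i), I will use that the set $M := \{\theta : \mu \leq_c \theta \leq_c \nu\}$ is weakly compact (the same fact invoked in the proof of Lemma~\ref{lem:ContinuousDuality}). The map $\theta \mapsto \theta(f)$ is upper semicontinuous on $M$: since $f$ is usc and $0 \leq f \leq \xi$ for a $\nu$-integrable convex continuous upper bound $\xi$, I truncate via $f_k := f \wedge k$ (bounded usc), apply the Portmanteau theorem, and control the tail by $\theta(f - f_k) \leq \theta((\xi-k)^+) \leq \nu((\xi-k)^+) \to 0$ uniformly in $\theta \in M$ (using $\theta \leq_c \nu$ and convexity of $(\xi-k)^+$). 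Hence $\wt\bfS_{\mu,\nu}(f)$ is attained on $M$, so $\Theta(f)$ is non-empty; convexity is immediate and $\Theta(f)$ is weakly closed as an upper level set of a usc function, hence weakly compact. Since $g$ is continuous of linear growth and $\theta(|\cdot|) \leq \nu(|\cdot|)$ is uniformly bounded on $M$, the map $\theta \mapsto \theta(g)$ is weakly continuous on $M$ and attains its maximum on the compact set $\Theta(f)$.

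For (ii), the common tool is the identity
\begin{equation*}
\theta'(g) - \theta(g) = \tfrac{1}{2}\int_\RR (u_{\theta'} - u_\theta) \dd g''
\end{equation*}
valid for $\theta \leq_c \theta'$ (cf.\ Definition~\ref{def:concave integral}), together with the observation that strict convexity of $g$ makes $g''$ a fully supported Radon measure on $\RR$. Maximality is then immediate: any $\theta' \in \Theta(f)$ with $\theta \leq_c \theta'$ and $\theta \neq \theta'$ satisfies $u_\theta < u_{\theta'}$ on some open set (since $\theta$ and $\theta'$ share mass and barycenter with $\mu$), forcing $\theta'(g) > \theta(g)$ and contradicting optimality. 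For the second bullet, assuming $\theta(O) > 0$ with $f|_O$ convex and $O \subseteq \{u_\theta < u_\nu\}$, I construct a strict convex-order extension $\theta'$ of $\theta$ by applying a non-trivial martingale kernel to a small portion of $\theta|_O$ concentrated in a subinterval where $u_\nu - u_\theta$ is uniformly bounded below; if the perturbation is scaled small enough, $u_{\theta'} \leq u_\nu$ persists. Then $\theta' \leq_c \nu$, while $\int f \dd \theta' \geq \int f \dd \theta$ by Jensen's inequality (the kernel is supported in $O$ where $f$ is convex), so $\theta' \in \Theta(f)$ and the identity above yields $\theta'(g) > \theta(g)$, a contradiction. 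For the third bullet, assuming $\theta(K) > 0$ with $\theta|_K$ not a single atom, $K^\circ \subseteq \{u_\mu < u_\theta\}$, and $f|_K$ strictly concave, I pick distinct points $c < d$ in $\supp(\theta|_K) \cap K^\circ$ and collapse a small portion of $\theta$ lying near $c$ and $d$ into its barycenter. The resulting $\theta'$ satisfies $\mu \leq_c \theta' \leq_c \theta \leq_c \nu$ (smallness plus the uniform slack $u_\mu < u_\theta$ on a compact neighborhood of $[c,d]$ preserves $u_\mu \leq u_{\theta'}$), while strict concavity of $f|_K$ and Jensen give $\theta'(f) > \theta(f) \geq \wt\bfS_{\mu,\nu}(f)$, contradicting $\theta' \leq_c \nu$.

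The main obstacle will be the careful construction of the perturbations in the second and third bullets and verifying that they remain in $[\mu,\nu]_c$. For the second bullet, an explicit choice is the symmetric spreading kernel $x \mapsto \tfrac{1}{2}(\delta_{x-\delta} + \delta_{x+\delta})$ applied to a small fraction $\eta$ of $\theta|_{[a,b]}$ for some $[a,b] \subset O$ with $u_\nu - u_\theta \geq \varepsilon$ on $[a,b]$; a direct computation shows $u_{\theta'} - u_\theta \leq C\eta\delta^2$ so that $u_{\theta'} \leq u_\nu$ for sufficiently small $\eta,\delta$. For the third bullet, restricting the collapse to small portions near interior points $c,d$ ensures the change in potential is localized to a compact subset of $K^\circ$ on which $u_\theta - u_\mu \geq \varepsilon' > 0$ uniformly, so a sufficiently small collapse preserves $u_\mu \leq u_{\theta'}$. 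Once these perturbation lemmas are in place, the remaining work reduces to convex-order manipulations based on the preliminary material from Section~\ref{sec:BNT integral}.
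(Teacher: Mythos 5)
Since the paper omits its own proof of this proposition, there is nothing to compare against; judged on its own merits, your part (i), the maximality bullet, and the second bullet are essentially sound, up to two small repairs: weak continuity of $\theta \mapsto \theta(g)$ does not follow from a uniform bound on first moments alone (false in general) — you need uniform integrability, which does hold here because $\theta\big((|x|-R)^+\big) \leq \nu\big((|x|-R)^+\big) \to 0$ uniformly over $\theta \leq_c \nu$; and the potential increase caused by spreading mass $\eta\,\theta([a,b])$ by $\pm\delta$ is of order $\eta\delta$ (a tent of height $\delta$ per unit of spread mass), not $\eta\delta^2$, which is harmless since any bound tending to zero suffices.

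The genuine gap is in the third bullet. You pick two distinct points $c<d$ of $\supp(\theta|_K)$ \emph{in the interior} $K^\circ$, but if $\theta|_K$ is not a single atom such points need not exist: $K$ may be closed and $\theta|_K$ may consist of atoms at one or both endpoints of $K$ plus at most one interior atom, e.g.\ $\theta|_K = \tfrac12\delta_{k_1} + \tfrac12\delta_{k_2}$ with $K=[k_1,k_2]$. In that configuration your localization claim fails: collapsing mass taken from an endpoint atom perturbs the potential on an interval reaching all the way to that endpoint, where the slack $u_\theta - u_\mu$ may vanish (only $K^\circ \subseteq \{u_\mu < u_\theta\}$ is assumed), and the potential decrease near the endpoint is linear, of slope $2\epsilon$ in the distance to $k_1$ when mass $\epsilon$ is removed from the atom there; so if $u_\theta - u_\mu$ decays sublinearly at $k_1$, no choice of small $\epsilon$ preserves $u_{\theta'} \geq u_\mu$ by your argument. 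The case is repairable, but it needs an extra observation: when $k_1$ carries an atom and $\supp(\theta|_K)\cap K^\circ$ contains at most one point $p$, then $\theta((k_1,p))=0$, hence $u_\theta - u_\mu$ is concave on $(k_1,p)$ and therefore dominates a linear function with strictly positive slope vanishing at $k_1$; since the tent-shaped decrease produced by the collapse is bounded by $2\epsilon(t-k_1)$ there (and symmetrically at the other endpoint, e.g.\ by taking equal masses from the two chosen support points), a sufficiently small collapse stays above $u_\mu$, and the strict Jensen step then contradicts the optimality defining $\wt\bfS_{\mu,\nu}(f)$ exactly as in your interior case. Until this endpoint configuration is treated, the third bullet is not fully proved.
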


The following example shows that the set optimizers for $\wt\bfS_{\mu,\nu}(f)$ can have multiple maximal or minimal elements with respect to the convex order; there is in general no greatest or least element for this partially ordered set.

\begin{example}
Let $\mu = \delta_0$ and $\nu = \frac{1}{3}(\delta_{-1} + \delta_0 + \delta_1)$ and let $f$ be piecewise linear with $f(-1) = f(1) = 3$, $f(-1/2) = f(1/2) = 2$, and $f(0) = 0$. We claim that there is no greatest or least primal optimizer.

We construct candidate primal and dual optimizers as follows. On the primal side, set $\theta_1 = \tfrac{2}{3}\delta_{-\frac{1}{2}} + \tfrac{1}{3}\delta_1$ and $\theta_2 = \tfrac{1}{3}\delta_{-1} + \tfrac{2}{3}\delta_{\frac{1}{2}}$. On the dual side, set $\varphi \equiv 0$ and let $\psi$ be the convex function that interpolates linearly between $\psi(-1) = \psi(1) = 3$ and $\psi(0) = 1$. Direct computations yield $\theta_1(f) = \theta_2(f) = \frac{7}{3} = \nu(\psi)$ which shows that $\theta_1$ and $\theta_2$ are primal optimizers and that $(\varphi,\psi)$ is a dual optimizer.

First, we show that there is no primal optimizer which dominates both $\theta_1$ and $\theta_2$ in convex order. Indeed, one can check that $u_\nu = \max(u_{\theta_1},u_{\theta_2})$, so that $\nu$ is the only feasible primal element which dominates both $\theta_1$ and $\theta_2$ in convex order. But $\nu(f) = 2 < \frac{7}{3}$ and therefore $\nu$ is not optimal.

Second, we show that there is no primal optimizer which is dominated by both $\theta_1$ and $\theta_2$. Indeed, one can check that $\lbrace u_\mu < \min(u_{\theta_1},u_{\theta_2})\rbrace = (-\frac{1}{2},\frac{1}{2})$, so that every feasible primal element that is dominated by both $\theta_1$ and $\theta_2$ must be concentrated on $[-\frac{1}{2},\frac{1}{2}]$. But $f \leq 2$ on $[-\frac{1}{2},\frac{1}{2}]$, so that no primal optimizer can be concentrated on this interval.
\end{example}

We conclude this section with an example that shows that primal attainment does not hold in general if $f$ is not upper semicontinuous.

\begin{example}
\label{ex:no primal attainment}
Let $\mu = \delta_0$, $\nu = \frac{1}{2}(\delta_{-1}+\delta_1)$, and set $f(x) := |x|\1_{(-1,1)}(x)$. Then $\mu \leq_c \nu$ is irreducible with domain $((-1,1),[-1,1])$. Considering the sequence $\theta_n := \frac{1}{2}(\delta_{-1+\frac{1}{n}} + \delta_{1-\frac{1}{n}})$, one can see that $\wt\bfS_{\mu,\nu}(f) \geq 1$. But there is no $\mu \leq_c \theta \leq_c \nu$ such that $\theta(f) \geq 1$ because $f < 1$ on $[-1,1]$.
\end{example}

\section{Examples}
\label{sec:examples}

Two common payoff functions are risk reversals and butterfly spreads. In this section, we provide solutions to the auxiliary primal and dual problems for these payoffs. Throughout this section, we fix irreducible marginals $\mu \leq_c \nu$ and denote their common total mass and first moment by $m_0$ and $m_1$, respectively.

\subsection{Risk reversals}

The payoff function of a risk reversal is of the form
\begin{align*}
f(x)
&= -(a-x)_+ + (x-b)_+,
\end{align*}
for fixed $a<b$. The following result provides a simple geometric construction of the primal and dual optimizers in terms of the potential functions of $\mu$ and $\nu$.\footnote{The authors thank David Hobson for the idea of this construction.} We recall that any convex function $u$ lying between the potential functions $u_\mu$ and $u_\nu$ is the potential function of a measure $\theta$ which is in convex order between $\mu$ and $\nu$ (cf., e.g., \cite{ChaconWalsh1976}).

\begin{figure}
\centering
\begin{tabular}{cc}
\resizebox{0.4\textwidth}{!}{
\begin{tikzpicture}[scale=0.5]
\draw[domain=-9:9,variable=\x, very thick] plot ({\x},{sqrt(27+\x*\x)});
\draw[domain=-9.5:9.5,variable=\x, very thick] plot ({\x},{sqrt(8.33333+\x*\x)});
\draw[domain=-10:10,variable=\x, dashed] plot ({\x},{abs(\x)});
\draw[domain=-2:3,variable=\x] plot ({\x},{6 + 0.5*(\x-3)});
\draw[->,>=stealth] (-10,0) -- (10,0) {};

\node at (-5,7.7) {$u_\nu$};
\node at (-5.9,7) {$u_\mu$};

\draw[->,>=stealth] (-10,-9) -> (10,-9) {};
\draw (1,-9) -- (9,-1) {};
\draw[very thick] (-2,-9) -- (-7,-14) {};
\draw[very thick] (-2,-9) -- (3,-7) {};
\draw[very thick] (3,-7) -- (9,-1) {};

\draw (-2,-8.8)--(-2,-9.2) node[below] {$a$};
\draw (1,-8.8)--(1,-9.2) node[below] {$b$};

\node at (4.8,-4) {$\varphi + \psi$};
\node at (2,-8.5) {$f$};

\draw[dotted] (-2,-9) -- (-2,3.5) node[below left] {$(a,u_\mu(a))$};
\node[circle,very thick,draw=black,fill=white,minimum size = 4pt,inner sep=0pt] at (-2,3.5) {};
\draw[dotted] (3,-7) -- (3,6) node[above left] {$(z,u_\nu(z))$};
\node[circle,very thick,draw=black,fill=white,minimum size = 4pt,inner sep=0pt] at (3,6) {};
\end{tikzpicture}} &

\resizebox{0.4\textwidth}{!}{
\begin{tikzpicture}[scale=0.5]
\draw[domain=-9:9,variable=\x, very thick] plot ({\x},{sqrt(24+\x*\x)});
\draw[domain=-9.5:9.5,variable=\x, very thick] plot ({\x},{sqrt(8.64+\x*\x)});
\draw[domain=-10:10,variable=\x, dashed] plot ({\x},{abs(\x)});

\draw[domain=-3:1,variable=\x] plot ({\x},{5 + 0.2*(\x-1)});

\draw[->,>=stealth] (-10,0) -> (10,0) {};

\node at (-5.3,7.8) {$u_\nu$};
\node at (-5.8,7) {$u_\mu$};

\draw[->,>=stealth] (-10,-9) -> (10,-9) {};
\draw[very thick] (2,-9) -- (9,-2) node[left] {$\varphi + \psi$};
\draw[very thick] (-3,-9) -- (-8,-14) {};
\draw[very thick] (-3,-9) -- (2,-9) {};

\draw (-3,-8.8)--(-3,-9.2) node[below] {$a$};
\draw (2,-8.8)--(2,-9.2) node[below] {$b$};

\draw[dotted] (-3,-9) -- (-3,4.2) node[below left] {$(a,u_\mu(a))$};
\node[circle,very thick,draw=black,fill=white,minimum size = 4pt,inner sep=0pt] at (-3,4.2) {};
\draw[dotted] (1,-9) -- (1,5) node[above left] {$(z,u_\nu(z))$};
\node[circle,very thick,draw=black,fill=white,minimum size = 4pt,inner sep=0pt] at (1,5) {};
\end{tikzpicture}}
\end{tabular}
\caption{Construction of the potential functions of the optimal intermediate laws $\theta$ (top) and the dual optimizers $\varphi + \psi$ (bottom) for a risk reversal as described in Proposition~\ref{prop:risk reversal}; $z > b$ in the left panel and $z < b$ in the right panel.}
\label{fig:risk reversal}
\end{figure}
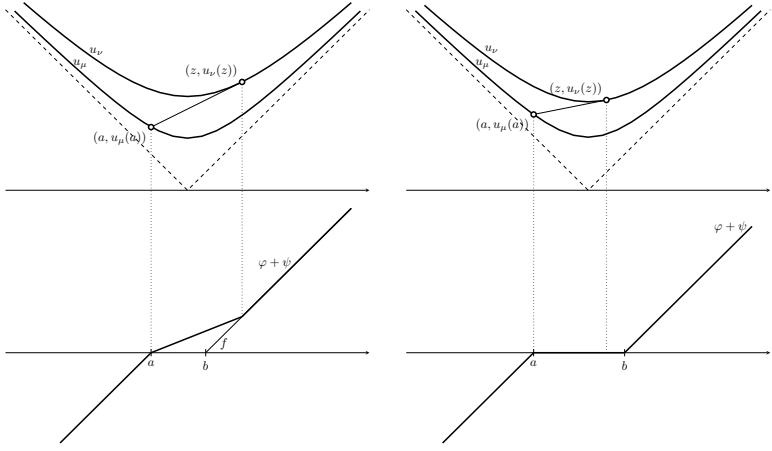

\begin{proposition}
\label{prop:risk reversal}
Consider the line through the point $(a,u_\mu(a))$ of maximal slope lying below (or on) the graph of $u_\nu$; cf.~Figure~\ref{fig:risk reversal}. This line is either (i) a tangent line to the graph of $u_\nu$ with a tangent point $(z,u_\nu(z))$ for some $z \in (a,\infty)$ or (ii) the asymptote line for the graph of $u_\nu$ near $+\infty$.\footnote{Note that case (ii) can only happen when $(a,\mu(a))$ lies on the increasing part of the dashed potential function in Figure~\ref{fig:risk reversal}. In particular, in this case, $\mu$ is concentrated on the left of $a$.}

In case (i), define the concave function $\varphi$ and the convex function $\psi$ by
\begin{align*}
\varphi(x)
&=
 - \alpha (x-a)_+,\\
\psi(x)
&= x-a + \alpha(x-(z\vee b))_+,
\end{align*}
where $\alpha = (b-a)/((z\vee b)-a)$. Moreover, let $u$ be the unique convex function that coincides with $u_\mu$ on $(-\infty,a]$ and with $u_\nu$ on $[z,\infty)$ and is affine on $[a,z]$ (i.e., $u$ coincides on $[a,z]$ with the tangent line considered above). Denote by $\theta$ the unique measure with potential function $u_\theta = u$. In case (ii), set $\varphi(x) = 0$, $\psi(x) = x-a$, and $\theta = \mu$.

Then, $\theta$ is an optimizer for the auxiliary primal problem $\wt\bfS_{\mu,\nu}(f)$, $(\varphi,\psi)$ is an optimizer for the auxiliary dual problem $\wt \bfI_{\mu,\nu}(f)$, and the common optimal optimal value is given in terms of the potential functions of $\mu$ and $\nu$ by
\begin{align*}
\wt\bfS_{\mu,\nu}(f)
= \wt\bfI_{\mu,\nu}(f)
= 
\begin{cases}
m_1 - \frac{a+b}{2}m_0 + \frac{b-a}{2}\frac{u_\nu(z\vee b) - u_\mu(a)}{(z\vee b)-a} & \text{in case (i)},\\
m_1 - a m_0 & \text{in case (ii)}.
\end{cases}
\end{align*}
\end{proposition}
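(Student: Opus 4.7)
The strategy is to verify that the constructed $\theta$ and $(\varphi,\psi)$ are feasible for the respective auxiliary problems and satisfy $\theta(f) = \mu(\varphi) + \nu(\psi)$. Combined with the weak duality inequality $\theta_0(f) \leq \mu(\varphi_0) + \nu(\psi_0)$ for any feasible primal--dual pair (established in the opening computation of the proof of Lemma~\ref{lem:ContinuousDuality} via Lemma~\ref{lem:ConvexIntegrationRules}), this forces both to be optimal and the claimed value to be the common optimum.

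For dual feasibility, one checks that $\alpha \in (0,1]$ in both cases, so $\varphi$ is concave and $\psi$ is convex. Both have at most linear growth, whence $(\varphi,\psi)\in L^c(\mu,\nu)$ with $\chi = \varphi$ serving as concave moderator (using $\varphi + \psi \geq f \geq -(a-x)_+$, which is $\nu$-integrable). The inequality $\varphi+\psi\geq f$ is a piecewise verification on $(-\infty,a]$, $[a,z\vee b]$, and $[z\vee b,\infty)$: one finds equality on the outer two intervals, while on the middle interval the inequality reduces to $(1-\alpha)(x-a)\geq(x-b)_+$, which follows from $\alpha((z\vee b)-a)=b-a$.

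For primal feasibility in case~(i), I would invoke Proposition~\ref{prop:Strassen} and verify that $u_\theta$ is convex with $u_\mu\leq u_\theta\leq u_\nu$. Writing $s:=(u_\nu(z)-u_\mu(a))/(z-a)$ for the slope of the affine middle piece, the key observation is $s\geq u_\mu'(a^+)$: the tangent to $u_\mu$ at $a$ lies below $u_\mu\leq u_\nu$ and hence is a feasible line, so maximality of $s$ gives the inequality. Convexity at $x=a$ then follows from $s\geq u_\mu'(a^+)\geq u_\mu'(a^-)$, and convexity at $x=z$ from the fact that $s$ lies in the subdifferential of $u_\nu$ at $z$ (tangency from below). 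On $[a,z]$ the affine piece lies above $u_\mu$ because $(u_\mu(x)-u_\mu(a))/(x-a)$ is nondecreasing in $x>a$ and bounded by $(u_\mu(z)-u_\mu(a))/(z-a)\leq s$, and below $u_\nu$ because tangent lines to convex functions lie below them.

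For the common value, I would use the decomposition
\begin{equation*}
f(x)=x-\frac{a+b}{2}+\frac{1}{2}\bigl(|x-b|-|x-a|\bigr),
\end{equation*}
which gives $\theta(f)=m_1-\frac{a+b}{2}m_0+\frac{1}{2}(u_\theta(b)-u_\theta(a))$ (since $\theta$ has the same mass and first moment as $\mu$). A case split on whether $b\leq z$ or $b>z$ yields $u_\theta(b)-u_\theta(a)=(b-a)(u_\nu(z\vee b)-u_\mu(a))/((z\vee b)-a)$. For the dual expression, the identity $\eta((\cdot-c)_+)=(u_\eta(c)+m_1-cm_0)/2$ for $\eta\in\{\mu,\nu\}$ reduces $\mu(\varphi)+\nu(\psi)$ to the same expression. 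Case~(ii) is handled analogously: the asymptote condition forces $u_\mu(a)=am_0-m_1$, equivalently $\supp(\mu)\subseteq(-\infty,a]$; then $f(x)=x-a$ on $\supp(\mu)$ and a direct computation gives $\mu(f)=m_1-am_0=\mu(\varphi)+\nu(\psi)$. The main obstacle I foresee is the primal feasibility check, specifically handling possible atoms of $\mu$ at $a$ or of $\nu$ at $z$: these produce kinks in $u_\mu$ and $u_\nu$ that must be accommodated via one-sided derivatives throughout the convexity and sandwich arguments.
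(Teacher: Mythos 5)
Your proposal is correct and follows essentially the same route as the paper's proof: check feasibility of $\theta$ and $(\varphi,\psi)$, invoke weak duality (valid here since $f$ is bounded below by the concave $\nu$-integrable function $-(a-\cdot)_+$; cf.\ Remark~\ref{rem:auxiliary duality:relax lower bound}), and match $\theta(f)$ with $\mu(\varphi)+\nu(\psi)$ via the identity $(x-c)_+=\tfrac12(|x-c|+x-c)$ and a case split on $z\lessgtr b$. You simply spell out the primal feasibility (convexity and sandwiching of $u_\theta$) in more detail than the paper, which treats it as immediate from the construction.
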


\begin{proof}
We first note that $\theta$ and $(\varphi, \psi)$ are admissible elements for the auxiliary primal and dual problems, respectively. Indeed, by construction, $u_\theta$ is convex and lies between $u_\mu$ and $u_\theta$. Thus, the associated measure $\theta$ satisfies $\mu \leq_c \theta \leq_c \nu$. Moreover, a straightforward computation shows that $\varphi + \psi \geq f$, and $(\varphi,\psi) \in L^c(\mu,\nu)$ by Lemma~\ref{lem:ConvexIntegrationRules}.

By the weak duality inequality \eqref{eqn:lem:ContinuousDuality:pf:weak duality} (this also holds if $f$ is bounded from below by an affine function; cf.~Remark~\ref{rem:auxiliary duality:relax lower bound}), $\theta(f) \leq \mu(\varphi) + \nu(\psi)$ holds for any admissible primal and dual elements. It thus suffices to show that $\theta(f) = \mu(\varphi) + \nu(\psi)$ for our particular choices for $\theta$ and $(\varphi,\psi)$. 

\emph{Case (i):} Using the identity $(t-s)_+ = \frac{1}{2}(\vert t-s \vert + t-s)$, the integrals $\theta(f), \mu(\varphi)$, and $\nu(\psi)$ can be expressed in terms of the potential functions of $\mu,\theta$, and $\nu$ as follows:
\begin{align*}
\theta(f)
&= \frac{1}{2}(u_\theta(b) - u_\theta(a)) + m_1 - \frac{a+b}{2}m_0,\\
\mu(\varphi)
&= -\frac{\alpha}{2}(u_\mu(a) + m_1 - am_0),\\
\nu(\psi)
&= m_1 - am_0 + \frac{\alpha}{2}(u_\nu(z \vee b) + m_1 - (z\vee b)m_0).
\end{align*}
Substituting $\alpha = (b-a)/((z\vee b)-a)$ and simplifying gives
\begin{align*}
\mu(\varphi) + \nu(\psi)
&= m_1 + \frac{1}{2}((a - (z\vee b)) \alpha - 2a) m_0 - \frac{\alpha}{2}(u_\mu(a) - u_\nu(z\vee b))\\
&= m_1 - \frac{a+b}{2}m_0 + \frac{b-a}{2}\frac{u_\nu(z\vee b) - u_\mu(a)}{(z\vee b)-a}.
\end{align*}
Hence,
\begin{align*}
\theta(f) - (\mu(\varphi) + \nu(\psi))
&= \frac{b-a}{2}\left(\frac{u_\theta(b) - u_\theta(a)}{b-a} - \frac{u_\nu(z\vee b) - u_\mu(a)}{(z\vee b)-a}\right),
\end{align*}
and it suffices to show that the two quotients inside the brackets are equal. To this end, we distinguish two cases. On the one hand, if $z \leq b$, it is enough to observe that $u_\theta(a) = u_\mu(a)$ and $u_\theta(b) = u_\nu(b)$ by the construction of $u = u_\theta$. On the other hand, if $z \geq b$, then the two quotients are the same because $u = u_\theta$ is affine on $[a, z] \supset [a,b]$ and coincides with $u_\mu$ at $a$ and with $u_\nu$ at $z$.

\emph{Case (ii):} On the one hand, since $\theta = \mu$ and $\mu$ is concentrated on the left of $a$, we have $\theta(f) = \mu(f)
= m_1-am_0$. On the other hand, $\mu(\varphi) + \nu(\psi) = \int (x-a) \,\nu(\diff x) = m_1 - am_0$.
\end{proof}

\subsection{Butterfly spreads}

The payoff function of a butterfly spread is of the form
\begin{align*}
f(x)
&= (x-(a-h))_+ - 2(x-a)_+ + (x-(a+h))_+, \\
\end{align*}
for fixed $a$ and $h > 0$. We have the following analog to Proposition~\ref{prop:risk reversal}; we omit the proof.

\begin{figure}
\centering
\resizebox{0.4\textwidth}{!}{
\begin{tikzpicture}[scale=0.5]
\draw[domain=-9:9,variable=\x, very thick] plot ({\x},{sqrt(24+\x*\x)});
\draw[domain=-9.5:9.5,variable=\x, very thick] plot ({\x},{sqrt(9+\x*\x)});
\draw[domain=-10:10,variable=\x, dashed] plot ({\x},{abs(\x)});
\draw[domain=0:6.3245,variable=\x] plot ({\x},{3 + 0.790569*\x});
\draw[domain=-6.3245:0,variable=\x] plot ({\x},{3 - 0.790569*\x});

\draw[->,>=stealth] (-10,0) -> (10,0) {};

\node at (-4.6,7.2) {$u_\nu$};
\node at (-5.2,6.5) {$u_\mu$};

\draw[->,>=stealth] (-10,-5) -> (10,-5) {};
\draw (-3,-5) -- (0,-2) {};
\draw (0,-2) -- (3,-5) {};
\draw[very thick] (-6.3245,-5) -- (0,-2) {};
\draw[very thick] (6.3245,-5) -- (0,-2) {};
\draw[very thick] (-10,-5) -- (-6.3245,-5) {};
\draw[very thick] (6.3245,-5) -- (10,-5) node[above left] {$\varphi + \psi$};

\draw (-3,-4.8)--(-3,-5.2) node[below] {$a-h$};
\draw (0,-4.8)--(0,-5.2) node[below] {$a$};
\draw (3,-4.8)--(3,-5.2) node[below] {$a+h$};

\node at (1.2,-3.8) {$f$};

\draw[dotted] (-6.3245,-5) -- (-6.3245,8) node[above right] {$(z_-,u_\nu(z_-))$};
\node[circle,very thick,draw=black,fill=white,minimum size = 4pt,inner sep=0pt] at (-6.3245,8) {};
\draw[dotted] (6.3245,-5) -- (6.3245,8) node[above left] {$(z_+,u_\nu(z_+))$};
\node[circle,very thick,draw=black,fill=white,minimum size = 4pt,inner sep=0pt] at (6.3245,8) {};
\draw[dotted] (0,-2) -- (0,3) node[below left] {$(a,u_\mu(a))$};
\node[circle,very thick,draw=black,fill=white,minimum size = 4pt,inner sep=0pt] at (0,3) {};
\end{tikzpicture}}
\caption{Construction of the potential function of the optimal intermediate law $\theta$ (top) and the dual optimizer $\varphi + \psi$ (bottom) for a butterfly spread as described in Proposition~\ref{prop:butterfly}.}
\label{fig:butterfly}
\end{figure}
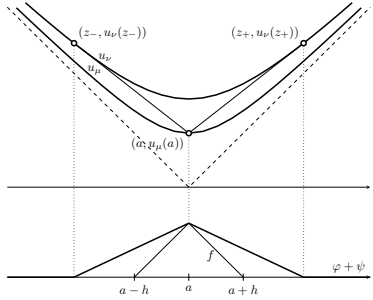

\begin{proposition}
\label{prop:butterfly}
Consider the two lines $l_+,l_-$ through the point $(a,u_\mu(a))$ of maximal and minimal slope, respectively, lying below (or on) the graph of $u_\nu$. We distinguish the cases (i$+$) $l_+$ is a tangent line with tangent point $(z_+,u_\nu(z_+))$,
(ii$+$) $l_+$ is an asymptote, (i$-$) $l_-$ is a tangent line with tangent point $(z_-,u_\nu(z_-))$, (ii$-$) $l_-$ is 
an asymptote. In case (ii$\pm$), we set $z_\pm = \pm \infty$.

Let $u$ be the convex function that coincides with $u_\nu$ on $(-\infty,z_-] \cup [z_+,\infty)$ and is affine on $[z_-,a]$ and on $[a,z_+]$, and define the concave function $\varphi$ and the convex function $\psi$ by
\begin{align*}
\varphi(x)
&= -(\alpha+\beta)(x-a)_+,\\
\psi(x)
&= \alpha(x-(z_- \wedge (a-h)))_+ + \beta(x-(z_+ \vee (a+h)))_+,
\end{align*}
where $\alpha = \frac{h}{a-(z_- \wedge (a-h))}$ and $\beta = \frac{h}{(z_+ \vee (a+h)) - a}$. Here, in the asymptote cases (ii$\pm$), $\varphi,\psi$ need to be interpreted as the limiting functions that arise as $z_\pm \to \pm\infty$.\footnote{For instance, if $z_- = -\infty$ and $z_+ < \infty$, then $\varphi(x) = -\beta(x-a)_+$ and $\psi(x) = h + \beta(x-(z_+ \vee (a+h)))_+$.}

Then, the intermediate law $\theta$ with potential function $u_\theta = u$ is an optimizer for the auxiliary primal problem $\wt\bfS_{\mu,\nu}(f)$, $(\varphi,\psi)$ is an optimizer for the auxiliary dual problem $\wt \bfI_{\mu,\nu}(f)$, and the common optimal optimal value is given in terms of the potential functions of $\mu$ and $\nu$ by
\begin{align*}
\wt\bfS_{\mu,\nu}(f)
= \wt\bfI_{\mu,\nu}(f)
= \tfrac{h}{2} (s_+ + s_-),
\end{align*}
where
\begin{align*}
s_+
&=
\begin{cases}
\frac{u_\nu(z_+ \vee (a+h)) - u_\mu(a)}{(z_+ \vee (a+h)) - a} &\text{in case (i+)},\\
m_0 & \text{in case (ii+)},
\end{cases}
\\
s_-
&=
\begin{cases}
\frac{u_\nu(z_- \wedge (a-h))-u_\mu(a)}{a-(z_- \wedge (a-h))}&\text{in case (i-)},\\
-m_0 & \text{in case (ii-)}.
\end{cases}
\end{align*}
\end{proposition}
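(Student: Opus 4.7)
I would mirror the proof of Proposition~\ref{prop:risk reversal}: (1) verify admissibility of $\theta$ and $(\varphi,\psi)$ for the auxiliary primal and dual problems; (2) invoke weak duality (inequality \eqref{eqn:lem:ContinuousDuality:pf:weak duality}, which applies directly since $f\geq 0$) to reduce the claim to the equality $\theta(f)=\mu(\varphi)+\nu(\psi)$; and (3) verify this equality by expressing both sides in terms of potential functions via the identity $(t-s)_+=\tfrac{1}{2}(|t-s|+(t-s))$, which yields $\rho((\cdot-s)_+)=\tfrac{1}{2}\bigl(u_\rho(s)+m_1-sm_0\bigr)$ for any finite measure $\rho$ with mass $m_0$ and first moment $m_1$.

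For admissibility of $\theta$, the function $u$ is convex by construction. The upper bound $u\leq u_\nu$ holds because each of the two affine pieces is a supporting line of $u_\nu$ in cases (i$\pm$) (and a limit of such in the asymptote cases). The lower bound $u\geq u_\mu$ follows from the concavity of $l_+-u_\mu$ on $[a,z_+]$ and of $l_--u_\mu$ on $[z_-,a]$: these concave functions vanish at $a$ (since $l_\pm(a)=u_\mu(a)$) and are nonnegative at the tangent points $z_\pm$ (since $l_\pm(z_\pm)=u_\nu(z_\pm)\geq u_\mu(z_\pm)$), hence are nonnegative throughout. For admissibility of $(\varphi,\psi)$, $\varphi$ is concave (single downward kink at $a$) and $\psi$ is convex (upward kinks at $z_-\wedge(a-h)$ and $z_+\vee(a+h)$); a direct inspection shows $\varphi+\psi$ is a tent of height $h$ at apex $a$ with base $[z_-\wedge(a-h),z_+\vee(a+h)]\supseteq[a-h,a+h]$, which dominates the narrower tent $f$; membership $(\varphi,\psi)\in L^c(\mu,\nu)$ follows from Lemma~\ref{lem:ConvexIntegrationRules} since both functions are affine outside a bounded interval.

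Applying the identity above termwise to $f=(x-(a-h))_+-2(x-a)_++(x-(a+h))_+$ gives
\begin{align*}
\theta(f)
&=\tfrac{1}{2}\bigl(u_\theta(a-h)-2u_\theta(a)+u_\theta(a+h)\bigr).
\end{align*}
By construction $u_\theta(a)=u_\mu(a)$, and a short case split on whether $a\mp h$ lies inside or outside $[z_-,z_+]$ shows that $u_\theta(a\mp h)=u_\mu(a)+h\,s_\mp$ in all cases: if $a-h\leq z_-$ the left affine piece of $u_\theta$ has slope $-s_-$; if $a-h>z_-$ then $u_\theta(a-h)=u_\nu(a-h)=u_\mu(a)+h s_-$ directly by the definition $s_-=(u_\nu(a-h)-u_\mu(a))/h$ in that sub-case (symmetrically for $a+h$). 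Hence $\theta(f)=\tfrac{h}{2}(s_++s_-)$. An analogous expansion of $\mu(\varphi)$ and $\nu(\psi)$ via the same potential-function identity and substituting $\alpha,\beta$ from their definitions produces the same value, with the $m_0,m_1$ contributions cancelling telescopically.

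The main obstacle I expect is the lower bound $u\geq u_\mu$ in step~(1), which rests on the concavity argument above; this crucially uses $l_\pm(z_\pm)=u_\nu(z_\pm)$ (the extremal choice of $l_\pm$ ensures precisely this, so there are no ``phantom'' tangent points where $u_\mu$ might exceed $u$). The asymptote cases (ii$\pm$) are a second, milder technicality: one interprets the formulas for $\varphi,\psi$ as pointwise limits (as in the footnote), uses that the asymptotic slopes of $u_\nu$ at $\pm\infty$ are $\pm m_0$, and verifies that setting $s_\pm=\pm m_0$ is the continuous extension that keeps the formula $\tfrac{h}{2}(s_++s_-)$ valid across the transition.
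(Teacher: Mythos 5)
Your strategy is exactly the one the paper intends (it omits the proof as ``analogous'' to Proposition~\ref{prop:risk reversal}), and in the tangent cases (i$\pm$) your argument is correct and essentially complete: convexity of $u$ and the sandwich $u_\mu\le u\le u_\nu$ (the concavity argument for the lower bound is the right one), feasibility of $(\varphi,\psi)$ via the tent comparison and Lemma~\ref{lem:ConvexIntegrationRules}, the identity $\rho((\cdot-s)_+)=\tfrac12\bigl(u_\rho(s)+m_1-sm_0\bigr)$, the second-difference formula $\theta(f)=\tfrac12\bigl(u_\theta(a-h)-2u_\theta(a)+u_\theta(a+h)\bigr)$, and the dual-side cancellation, which does close because $\alpha\bigl(a-(z_-\wedge(a-h))\bigr)=\beta\bigl((z_+\vee(a+h))-a\bigr)=h$ kills the $m_0$-contributions. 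One minor slip: your two sub-cases for $a-h$ are transposed (when $a-h>z_-$ the point lies on the affine piece of slope $-s_-$; when $a-h\le z_-$ one has $u_\theta(a-h)=u_\nu(a-h)$ and $z_-\wedge(a-h)=a-h$), but the conclusion $u_\theta(a-h)=u_\mu(a)+h\,s_-$ is unaffected.

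The genuine gap is your final sentence about the asymptote cases. For $s_+$ the continuous extension is indeed $m_0$, but for $s_-$ it is $+m_0$, not $-m_0$: since $u_\nu(x)=m_1-m_0x+o(1)$ as $x\to-\infty$, the case (i$-$) expression satisfies $\frac{u_\nu(z_-\wedge(a-h))-u_\mu(a)}{a-(z_-\wedge(a-h))}\to m_0$ as $z_-\to-\infty$. A direct computation in case (ii$-$) confirms this: there $u_\mu(a)=m_1-m_0a$, the left branch of $u_\theta$ is the asymptote of slope $-m_0$, so $u_\theta(a-h)=u_\mu(a)+hm_0$ and $\theta(f)=\tfrac{h}{2}(s_++m_0)$; likewise the footnote's limiting pair $\varphi=-\beta(x-a)_+$, $\psi=h+\beta(x-c_+)_+$ with $c_+=z_+\vee(a+h)$ gives $\mu(\varphi)+\nu(\psi)=hm_0+\tfrac{\beta}{2}\bigl(u_\nu(c_+)-u_\mu(a)\bigr)-\tfrac{\beta}{2}(c_+-a)m_0=\tfrac{h}{2}(s_++m_0)$. (Reflection symmetry $x\mapsto-x$, which swaps the roles of $s_+$ and $s_-$, forces the same conclusion, since the stated value for case (ii$+$) is $+m_0$.) So the claim that ``setting $s_\pm=\pm m_0$ is the continuous extension'' cannot be verified: your own method produces $+m_0$ in case (ii$-$), i.e.\ the printed $s_-=-m_0$ is a sign slip in the statement. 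As written, this unverified continuity assertion is the one step of your proof that would fail; you should either carry out the (ii$-$) computation and record the value $s_-=+m_0$, or at least flag the discrepancy rather than assert the verification.
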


\section{Counterexamples}
\label{sec:counterexamples}

In this section, we give four counterexamples. Example~\ref{ex:globally convex concave} shows that strong duality for the auxiliary problems may fail for general (not necessarily irreducible) marginals if the dual elements $\varphi,\psi$ are required to be \emph{globally} concave and convex, respectively. Example~\ref{ex:NoRegularIntegrability} shows that strong duality may fail if the dual elements $\varphi$ and $\psi$ are required to be $\mu$- and $\nu$-integrable, respectively.
Example~\ref{ex:multi marginal} shows that the robust model-based prices of Asian- and American-style derivatives are typically not equivalent when more than two marginals are given. Example~\ref{ex:dual auxiliary fail} shows that the equality $\bfS_{\mu,\nu}(f,\cA)= \wt\bfS_{\mu,\nu}(f)$ may fail when the assumptions of Proposition~\ref{prop:reduction:primal} are violated.

\begin{example}[Duality gap with globally convex/concave dual elements]
\label{ex:globally convex concave}
Let $\mu = \frac{1}{2}\delta_{-1} + \frac{1}{2}\delta_1$, let $\nu$ be the uniform distribution on $(-2,2)$, and set $f(x) := \vert x \vert^{-\frac{1}{2}}$, $x \in \RR$ (with $f(0) = \infty$).

First, we show that $\wt\bfS_{\mu,\nu}(f)$ is finite. Fix any $\mu\leq_c\theta\leq_c\nu$. Computing the potential functions $u_\mu$ and $u_\nu$ shows that $\mu \leq_c \nu$ and that $\lbrace u_\mu < u_\nu \rbrace = I_1 \cup I_2$ with $I_1 = (-2,0)$ and $I_2 = (0,2)$. Because $\nu$ does not have an atom at the common boundary $0$ of $I_1$ and $I_2$, also $\theta$ cannot have an atom at $0$. Thus, we can write $\theta = \theta_1 + \theta_2$ with
\begin{equation*}
\frac{1}{2}\delta_{-1}
\leq_c \theta_1
\leq_c \nu|_{I_1}
\quad\text{and}\quad
\frac{1}{2}\delta_{1}
\leq_c \theta_2
\leq_c \nu|_{I_2}.
\end{equation*}
Since $f$ is convex when restricted to $I_1$ or $I_2$, we have
\begin{equation*}
\theta(f)
= \theta_1(f) + \theta_2(f)
\leq \nu|_{I_1}(f) + \nu|_{I_2}(f)
= \nu(f)
< \infty.
\end{equation*}
It follows $\wt\bfS_{\mu,\nu}(f) = \nu(f) < \infty$.

Second, let $\varphi$ be concave and $\psi$ be convex such that $\varphi + \psi \geq f$. We show that then necessarily $\mu(\varphi) + \nu(\psi) = \infty$. To this end, we may assume that $\varphi < \infty$ on $\supp(\mu) = \lbrace -1, 1\rbrace$. Then $\varphi < \infty$ everywhere by concavity. Thus, evaluating $\varphi + \psi \geq f$ at $0$ implies that $\psi(0) = \infty$. Therefore, $\psi = \infty$ on $(-\infty,0]$ or on $[0,\infty)$ by the convexity of $\psi$. In both cases, we have $\mu(\varphi) + \nu(\psi) = \infty$.
\end{example}

\begin{example}[Duality gap with individually integrable dual elements]
\label{ex:NoRegularIntegrability}
We consider the marginals
\begin{equation*}
\mu
:= C \sum_{n\geq 1} n^{-3}\mu_n
\quad\text{and}\quad
\nu
:= C\sum_{n\geq 1} n^{-3} \nu_n,
\end{equation*}
where $C := (\sum_{n\geq 1} n^{-3})^{-1}$, $
\mu_n
:= \delta_n$ and $\nu_n
:= \frac{1}{3}(\delta_{n-1}+\delta_n + \delta_{n+1})$
 for $n\geq 1.$
These are the same marginals as in \cite[Example~8.5]{BeiglbockNutzTouzi2017} where it is shown that $\mu\leq_c\nu$ is irreducible with domain $((0,\infty),[0,\infty))$. We now let $f:\RR_+\to[0,1]$ be the piecewise linear function through the points given by $f(n) = 0$ and $f(2n + \tfrac{1}{2}) = \tfrac{1}{4}$ for $n \geq 0$; cf.~Figure~\ref{fig:ex:NoRegularIntegrability}.

We proceed to construct candidates for optimizers for $\wt\bfS_{\mu,\nu}(f)$ and $\wt\bfI_{\mu,\nu}(f)$. For the primal problem, define the sequence $(\bar\theta_n)_{n \geq 1}$ by
\begin{align*}
\bar\theta_n
&=
\begin{cases}
\frac{1}{3}(\delta_{n-1} + 2\delta_{n+\frac{1}{2}}) & \text{for } n \text{ even}, \\
\frac{1}{3}(2\delta_{n-\frac{1}{2}} + \delta_{n+1}) & \text{for } n \text{ odd},
\end{cases}  
\end{align*}
and set $\bar\theta := C\sum_{n\geq 1} n^{-3} \bar\theta_n$. One can check that $\mu_n \leq_c \bar\theta_n \leq_c \nu_n$ and compute $\bar\theta_n(f) = \frac{1}{6}$. Hence, $\mu \leq_c \bar\theta \leq_c \nu$ (by linearity of potential functions in the measure) and $\bar\theta(f) = \frac{1}{6}$.

\begin{figure}
\begin{center}
\begin{tikzpicture}
\draw[->] (-0.05,0) node[left] {$0$} -- (5,0) node[right] {$\RR_+$};
\draw[->] (0,-0.05) -- (0,1) {};
\draw[very thick] (0,0) -- (0.5,0.5) -- (1,0) -- (2,0) -- (2.5,0.5) -- (3,0) -- (4,0) -- (4.5,0.5) {};
\draw (0.5,0.05) -- (0.5,-0.05) node[below] {$\frac{1}{2}$};
\draw (1,0.05) -- (1,-0.05) node[below] {$1$};
\draw (1.5,0.05) -- (1.5,-0.05) node[below] {};
\draw (2,0.05) -- (2,-0.05) node[below] {$2$};
\draw (2.5,0.05) -- (2.5,-0.05) node[below] {$\frac{5}{2}$};
\draw (3,0.05) -- (3,-0.05) node[below] {$3$};
\draw (3.5,0.05) -- (3.5,-0.05) node[below] {};
\draw (4,0.05) -- (4,-0.05) node[below] {};
\draw (4.5,0.05) -- (4.5,-0.05) node[below] {};
\draw (0.05,0.5) -- (-0.05,0.5) node[left] {$\frac{1}{4}$};
\end{tikzpicture}
\end{center}
\caption{The function $f$ in Example \ref{ex:NoRegularIntegrability}.}
\label{fig:ex:NoRegularIntegrability}
\end{figure}
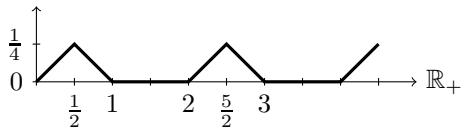

We now turn to the dual problem. Let $\bar\varphi$ and $\bar\psi$ be the unique concave and convex functions, respectively, with second derivative measures
\begin{equation*}
-\bar\varphi''
= \sum_{n \geq 0} \delta_{2n+\frac{1}{2}}
\quad\text{and}\quad
\bar\psi''
= \frac{1}{2}\sum_{n \geq 1}\delta_n
\end{equation*}
and $\bar\varphi(0) = \bar\psi(0) = 0$, $\bar\varphi'(0) = f'(0) = \frac{1}{2}$, and $\bar\psi'(0) = 0$. The ``initial conditions'' are chosen such that $f(0) = \bar\varphi(0) + \bar\psi(0)$ and $f'(0) = \bar\varphi'(0) + \bar\psi'(0)$ and the choice of the second derivative measures ensures that $\varphi$ and $\psi$ pick up the negative and positive curvature of $f$, respectively. Thus, $\bar\varphi + \bar\psi = f$ on $\RR_+$ by construction. We proceed to compute $\mu(\bar\varphi) + \nu(\bar\psi)$ in the sense of Definition~\ref{def:concave integral}. (The individual integrals are infinite because $\bar\varphi$ and $\bar\psi$ have quadratic growth while $\mu$ and $\nu$ have no second moments.) To this end, we note that $\bar\varphi + \bar\psi = f$ vanishes on the support of $\nu$. This implies that $\bar\varphi$ is a concave moderator for $(\bar\varphi,\bar\psi)$ with respect to $\mu\leq_c\nu$. We can then compute
\begin{align*}
\mu(\bar\varphi) + \nu(\bar\psi)
&= \mu(\bar\varphi-\bar\varphi) + \nu(\bar\psi + \bar\varphi) +  (\mu-\nu)(\bar\varphi)
= (\mu-\nu)(\bar\varphi) \\
&= C \sum_{n \geq 1} n^{-3} (\mu_n - \nu_n)(\bar\varphi).
\end{align*}
Fix $n \geq 1$. Because $\bar\varphi$ is continuous, we have
\begin{equation}
\label{eqn:ex:NoRegularIntegrability:concave integral}
(\mu_n - \nu_n)(\bar\varphi)
= \frac{1}{2}\int_I (u_{\mu_n} - u_{\nu_n}) \dd\bar\varphi''.
\end{equation}
The difference $u_{\mu_n} - u_{\nu_n}$ vanishes outside $(n-1,n+1)$ and on this interval, $\bar\varphi''$ is concentrated on either $n-\frac{1}{2}$ (if $n$ is odd) or on $n+\frac{1}{2}$ (if $n$ is even) with mass $1$. Therefore, the right-hand side of \eqref{eqn:ex:NoRegularIntegrability:concave integral} collapses to $\frac{1}{2}(u_{\mu_n} - u_{\nu_n})(n\pm\frac{1}{2}) = \frac{1}{6}$. It follows that $\mu(\bar\varphi) + \nu(\bar\psi) = \frac{1}{6} = \bar\theta(f)$. Hence, by (weak) duality, $\bar\theta$ and $(\bar\varphi,\bar\psi)$ are primal and dual optimizers, respectively.

We are now in a position to argue that no dual optimizer lies in $L^1(\mu) \times L^1(\nu)$. Suppose for the sake of contradiction that $(\varphi,\psi) \in L^1(\mu)\times L^1(\nu)$ is a dual optimizer and note that $\supp(\bar\theta) = \lbrace 0.5, 1, 2, 2.5, 3,\ldots \rbrace$. We have $\varphi + \psi = f$ $\bar\theta$-a.e.~by Proposition~\ref{prop:DualProperties}~(i). One can show that the following modifications of $(\varphi,\psi)$ do not affect its optimality nor the individual integrability of $\varphi$ and $\psi$; we omit the tedious details. First, $\psi$ is replaced by its piecewise linear interpolation at the atoms of $\nu$. Second, $\varphi$ is replaced by its piecewise linear interpolation at the kinks of $f$. Third, a suitable convex function is added to $\varphi$ and subtracted from $\psi$ (preserving their concavity and convexity, respectively) such that the second derivative measures $-\varphi''$ and $\psi''$ become singular.

Because $\varphi + \psi = f$ on $\supp(\bar\theta)$ and both sides are piecewise linear, we conclude that $\varphi + \psi = f$ holds on $[\frac{1}{2},\infty)$. As $-\varphi''$ and $\psi''$ are singular, $\varphi$ and $\psi$ must then account for the negative and positive curvature of $f$, respectively. It follows that both $\varphi$ and $\psi$ have quadratic growth. Since $\mu$ and $\nu$ do not have a second moment, we conclude that $\mu(\varphi) = -\infty$ and $\nu(\psi) = \infty$, a contradiction.
\end{example}

\begin{example}[Different robust model-based prices for Asian- and American-style derivatives for multiple marginals]
\label{ex:multi marginal}
For $n \geq 2$ given marginals $\mu_0\leq_c \mu_1 \leq_c \cdots \leq_c\mu_n$ corresponding to the time points $0,1,\ldots,n$ (say), the robust model-based price $\bfS_{\mu_0,\dots,\mu_n}(f,\cA)$ is defined analogously. But this robust model-based price now depends non-trivially on $\cA$, as the following example shows. Fix a strictly convex function $f$. On the one hand, if $\cA$ corresponds to American-style derivatives, then one can check that $\bfS_{\mu_0,\dots,\mu_n}(f,\cA) = \mu_n(f)$. On the other hand, for Asian-style derivatives, i.e., $\cA' = \{t \mapsto t/n\}$, Jensen's inequality yields
\begin{align*}
f\left(\frac{1}{n}\int_0^n X_t \dd t\right)
\leq \frac{1}{n} \sum_{i=0}^{n-1} f\left(\int_i^{i+1} X_t \dd t \right), 
\end{align*}
so that
\begin{align*}
\bfS_{\mu_0,\dots,\mu_n}(f,\cA')
\leq \frac{1}{n}\sum_{i=1}^n \mu_i(f) \leq \mu_n(f).
\end{align*}
For a generic choice of marginals, both inequalities are strict. Hence, the robust model-based price of an Asian-style derivative with a strictly convex payoff function is typically smaller than that of the corresponding American-style derivative.
\end{example}

\begin{example}[Necessity of the assumptions of Proposition~\ref{prop:reduction:primal}]\quad
\label{ex:dual auxiliary fail}
\begin{enumerate}
\item We show that $\bfS_{\mu,\nu}(f,\cA) = \wt\bfS_{\mu,\nu}(f)$ may fail if $\cA$ does not contain an interior averaging process. Set $\cA = \lbrace A \rbrace = \{t \mapsto \tfrac{1}{2} + \tfrac{1}{2}\1_{\lbrace t = T \rbrace}\}$, so that $\int_0^T X_t dA_t = (X_0 + X_T)/2$, and consider $f(x) = x^2$. Then, using the martingale property of $X$ under any $P \in \cM(\mu,\nu)$, one can check that $\bfS_{\mu,\nu}(f,\cA) = (3\mu(f) + \nu(f))/4$, whereas
$\wt\bfS_{\mu,\nu}(f) = \nu(f)$ since $f$ is convex. Now, choose $\mu$ and $\nu$ such that $\mu(f) < \nu(f)$ ($f$ is strictly convex). Then, $\bfS_{\mu,\nu}(f,\cA) < \wt\bfS_{\mu,\nu}(f)$.

\item We show that $\bfS_{\mu,\nu}(f,\cA) = \wt\bfS_{\mu,\nu}(f)$  may fail if $\cA$ contains an interior averaging process but $f$ is not lower semicontinuous. Set $\cA = \{t \mapsto t/T\}$ and $f(x) = \1_{\lbrace \vert x \vert \geq 1 \rbrace}$, and choose $\mu = \delta_0$ and $\nu = (\delta_1 + \delta_{-1})/2$. On the one hand, since $\nu(f) = 1$ and $f \leq 1$, we have $\wt\bfS_{\mu,\nu}(f) = 1$. On the other hand, we claim that $\bfS_{\mu,\nu}(f,\cA) = 0$. To this end, fix $P \in \cM(\mu,\nu)$. Since $P$-a.e.~path of $X$ starts in $0$, evolves in $[-1,1]$, and is right-continuous, $\left\vert \frac{1}{T}\int_0^T X_t \dd t \right\vert < 1$ $P$-a.s. Thus, $E^P\big[f(\frac{1}{T}\int_0^T X_t \dd t)\big] = 0$. Since $P \in \cM(\mu,\nu)$ was arbitrary, $\bfS_{\mu,\nu}(f,\cA) = 0$.
\end{enumerate}
\end{example}

\small
\bibliographystyle{amsplain}
\bibliography{bibliography}
\end{document}